\newcommand{\Tau}{\mathcal T}
\newcommand\righttwoarrow{%
        \mathrel{\vcenter{\mathsurround0pt
                \ialign{##\crcr
                        \noalign{\nointerlineskip}$\rightarrow$\crcr
                        \noalign{\nointerlineskip}$\rightarrow$\crcr
                }%
        }}%
}
\newtheorem{theorem}{Theorem}[section]
\newtheorem{corollary}[theorem]{Corollary}
\newtheorem{lemma}[theorem]{Lemma}
\newtheorem{prop}[theorem]{Proposition}
\newtheorem{remark}[theorem]{Remark}
\newtheorem{ex}[theorem]{Example}
\newtheorem{assum}{Assumption}
\theoremstyle{definition}
\title{Inertial Quasi-Newton Methods for Monotone Inclusions: Efficient Resolvent Calculus and Primal-Dual Algorithms}
\author[*]{Shida Wang}
\author[**]{Jalal Fadili}
\author[*]{Peter Ochs}
\affil[*]{Department of Mathematics, Saarland University, Germany}
\affil[**]{Normandie Universit\'e, ENSICAEN, UNICAEN, CNRS, GREYC, France.}
\date{\today}
\begin{document}

\maketitle
\begin{abstract}
    We introduce an inertial quasi-Newton Forward-Backward Splitting Algorithm to solve a class of monotone inclusion problems. While the inertial step is computationally cheap, in general, the bottleneck is the evaluation of the resolvent operator. A change of the metric makes its computation even harder, and this is even true for a simple operator whose resolvent is known for the standard metric. In order to fully exploit the advantage of adapting the metric, we develop a new efficient resolvent calculus for a low-rank perturbed standard metric, which accounts exactly for quasi-Newton metrics. Moreover, we prove the convergence of our algorithms, including linear convergence rates in case one of the two considered operators is strongly monotone. As a by-product of our general monotone inclusion framework, we instantiate a novel inertial quasi-Newton Primal-Dual Hybrid Gradient Method (PDHG) for solving saddle point problems. The favourable performance of our inertial quasi-Newton PDHG method is demonstrated on several numerical experiments in image processing.
\end{abstract}
\section{Introduction}
Nowadays, convex optimization appears in many modern disciplines, especially when dealing with datasets of large-scale. There is a strong need for efficient optimization schemes to solve such large-scale problems. Unfortunately, the high dimensionality of the problems at hand makes the use of second-order methods intractable. A promising alternative are quasi-Newton type methods, which aim for exploiting cheap and accurate first-order approximations of the second-order information. In particular, the so-called limited memory quasi-Newton method has proved very successful for solving unconstrained large-scale problems. However, many practical problems in machine learning, image processing or statistics naturally have constraints or are non-smooth by construction.  

A problem structure that can cover a broad class of non-smooth problems in these applications is the following inclusion problem on a real Hilbert space $\mathcal{H}$:
\begin{equation}\label{vi}
    \mathrm{find}\,\,x\in \mathcal{H}\ \ \mathrm{such\ that}\ \ (A+ B)x\ni 0\,,
\end{equation}
where $A\colon\mathcal{H}\righttwoarrow{\mathcal{H}}$ is a maximally monotone operator and $\map{B}{\mathcal{H}}{\mathcal{H}}$ is a single-valued $\beta$-co-coercive operator \textcolor{black}{with $\beta>0$}. As a special case, (\ref{vi}) comprises the setting of minimization problems of the form 
\begin{equation}
    \min_{x\in\mathcal{H}} f(x)+g(x)
\end{equation} with a proper lower semi-continuous convex function $f$ and convex function $g$ with Lipschitz continuous gradient by setting $A=\partial f$ and $B=\nabla g$. 

A fundamental algorithmic scheme to tackle the problem class (\ref{vi}) is Forward-Backward Splitting (FBS). However, this algorithm may exhibit slow convergence for ill-conditioned problems, for which one would like to exploit the second-order information to adapt to the local geometry of the problem. As a computationally affordable approximation in this paper, we propose a quasi-Newton variant that takes advantage from a variable metric that is computed using first-order information only. We manage to remedy the main computational bottlenecks for this type of approaches by developing an efficient low-rank variable metric resolvent calculus. 

Our approach is inspired by the proximal quasi-Newton method in \cite{becker2019quasi}. We extend the framework proposed in \cite{becker2019quasi} to the resolvent setting with a \textcolor{black}{``$M$ $+$ rank-$r$'' or ``$M$ $-$ rank-$r$''} symmetric positive-definite variable metric. We first study the convergence of two variants of FBS algorithm with respect to this type of metrics. \textcolor{black}{One variant uses an inertial step which opens the door to acceleration. Although accelerated rates are not proved yet, numerical results show that the inertial variant yields significantly improved convergence rates.} The second variant uses a relaxation technique which \textcolor{black}{enables convergence under a weaker assumption on the variable metrics.} In analogy to the proximal calculus proposed in \cite{becker2019quasi}, we develop a resolvent calculus that allows for an efficient evaluation of resolvent operators with respect to this type of metrics by splitting the evaluation into two computationally simple steps: calculating a resolvent operator with respect to a simple metric $M$ and solving a low dimensional root-finding problem. \textcolor{black}{This allows for the incorporation of popular quasi-Newton strategies, such as the limited memory SR1 or BFGS method, in our framework.}

In order to exploit the power and to illustrate the variety of problems that can be solved via the framework in (\ref{vi}), the developed algorithms are instantiated for the following saddle point problem:  
\begin{equation}\label{SaddlePointProblem}
    \min_{x\in\mathcal{H}_1}\max_{y \in \mathcal{H}_2}\scal{Kx}{y}+g(x)+G(x) - f(y) -F(y)\,,
\end{equation}
where $g$ and $f$ are lower semi-continuous convex functions, $G$ and $F$ are convex, differentiable with Lipschitz-continuous gradients and $K$ is a bounded linear operator \textcolor{black}{between Hilbert spaces $\mathcal H_1$ and $\mathcal{H}_2$}.  
We develop a quasi-Newton Primal-Dual methods that has many potential applications in Imaging Processing, Machine Learning or Statistics \cite{chambolle2016introduction, goldstein2015adaptive}.
The numerical performance of our algorithms is tested on several experiments and demonstrates a clear improvement when our quasi-Newton methods are used. 
\subsection{Related Works}\label{relatedwork}
\textbf{Smooth quasi-Newton}.
Quasi-Newton methods are widely studied and used for optimization with sufficiently smooth objective functions \cite{wright1999numerical}. Their motivation is to build a cheap approximation to the Newton's method. If the approximation of the second-order information (Hessian) is given by a positive-definite matrix, quasi-Newton methods can be interpreted as iteratively and locally adapting the metric of the space to the objective function. The success of these methods requires a good approximation of the second-order information of the objective by using the first-order information, which is still an active research area. Recently, \cite{Rodomanov} proposed to greedily select some basis vectors instead of using the difference of successive iterates for updating the Hessian approximations. Relying on \cite{Rodomanov}, \cite{liu2021quasi} constructed an approximation of the indefinite Hessian of a twice continuously differentiable function. All methods mentioned above require sufficient smoothness of the objective functions. \\
\textbf{Non-smooth quasi-Newton.}
A broad class of optimization problems is interpreted as a composition of a smooth function $f$ and a non-smooth function $h$. To deal with the non-smoothness of $h$ efficiently, many authors consider a combination of FBS with the quasi-Newton methods. 
By using the forward-backward envelope, \cite{patrinos2014forward,stella2017forward} reinterpreted the FBS algorithm as a variable metric gradient method for a smooth optimization problem in order to apply the classical Newton or quasi-Newton method. 
For a non-smooth function $g$ as simple as an indicator function of a non-empty convex set, \cite{schmidt,pmlr-v5-schmidt09a} proposed an elegant method named projected quasi-Newton algorithm (PQN) which, however, requires either solving a subproblem or using a diagonal metric. \cite{lee2014proximal} extended PQN to a more general setting as long as the proximal operator of $h$ is simple to compute. For a class of low-rank perturbed metrics, \cite{becker2012quasi, becker2019quasi} developed a proximal quasi-Newton method with a root-finding problem as the subproblem which can be solved easily and efficiently. This method can be extended to the nonconvex setting \cite{kanzow2021globalized}. Based on \cite{becker2012quasi, becker2019quasi}, \cite{kanzow2022efficient} incorporated a limited-memory quasi-Newton update. The authors of \cite{Vavasis} developed a different algorithm to evaluate the proximal operator of the separable $l_1$ norm with respect to a low-rank metric $V=M-UU^\top$. Recently, in \cite{khanh2023generalized}, the authors proposed a generalized damped Newton type which is based on second-order generalized Hessians.

In this paper, we extend the quasi-Newton approach of \cite{becker2012quasi, becker2019quasi} from the nonsmooth convex minimization setting to monotone inclusion problems of type (\ref{vi}). \textcolor{black}{Our framework opens the door to new problems (e.g. saddle-point problems) and algorithms (e.g. primal-dual algorithms) that are beyond the reach of the approach initiated in \cite{becker2012quasi, becker2019quasi}.} It is generic to use a variable metric for solving a monotone inclusion problem (see \cite{ combettes2012variable,combettes2014variable}). Its convergence relies on quasi-Fej\'er monotonicity \cite{combettes2012variable}. \textcolor{black}{However, the efficient calculation of the resolvent remains an open problem.} Our approaches use a variable metric to obtain a quasi-Newton method with an efficient resolvent calculus. \textcolor{black}{Regularized Newton-type methods in continuous time, both for convex optimization and monotone inclusions have been studied in a series of papers by Attouch and his co-authors: \cite{abbas2014newton,attouch2013global,attouch2011continuous,attouch2015dynamic}.} \textcolor{black}{Time discretization of these dynamics gives algorithms providing insight into regularized Newton's method for solving monotone inclusions (see \cite{abbas2014newton, attouch2011continuous,attouch2015dynamic}). In \cite{attouch2015dynamic}, a relative error tolerance for the solution of the proximal subproblem is also allowed. However, in all these papers, the Hessian ends up being discretized.}\\ 
\textbf{PDHG.}
Primal-Dual Hybrid Gradient (PDHG) is widely used for solving saddle point problems of the form (\ref{SaddlePointProblem}). PDHG can be interpreted as a proximal point algorithm \cite{He} \textcolor{black}{with a fixed metric} applied \textcolor{black}{to} a monotone inclusion problem. Based on this idea, \cite{lorenz2015inertial} proposed an inertial FBS method applied to the sum of set-valued operators from which a generalization of PDHG method is derived. Also based on similar ideas, \cite{Diagonal} considers diagonal preconditioning to accelerate PDHG. Their method can be regarded as using a fixed blocked matrix as a metric. Later, \cite{liu2021acceleration} considers non-diagonal preconditioning and pointed out if a special preconditioner is chosen, that kind of preconditioned PDHG method will be a special form of the linearized ADMM. Their method requires an inner loop due to the non-diagonal preconditioning. \cite{goldstein2015adaptive} introduces an adaptive PDHG scheme which can also be understood as using a variable metric with step size tuned automatically. \textcolor{black}{However, \cite{goldstein2015adaptive} focuses on changes to the diagonal of the metric.}  Our resolvent calculus in Section~\ref{Section:calc} provides another possibility to change metrics at elements off the diagonal to deal with resolvent operators. \textcolor{black}{Additionally, in \cite{liang2016convergence}, the author investigated inexact inertial variable proximal point algorithm with a different condition on the inertial step and distinct assumptions on error terms compared to ours.}   


\section{Preliminaries}
Let us recall some essential notations and definitions. Let $\mathcal{H}$ be a Hilbert space equipped with inner product $\scal{\cdot}{\cdot}$ and induced norm $\norm{\cdot}=\sqrt{\scal{\cdot}{\cdot}}$. \textcolor{black}{The symbols $\weakto$ and $\to$ respectively denote weak and strong convergence. $\ell_+^1(\N)$ is the set of all summable sequences in $[0,+\infty)$.} An operator $K\in \mathcal{B}(\mathcal{D},\mathcal{H})$ is a linear bounded mapping from a Hilbert space $\mathcal{D}$ to $\mathcal{H}$. The adjoint of $K$ is denoted by $K^*$. We abbreviate $\mathcal{B}(\mathcal{H},\mathcal{H})$ to $\mathcal{B}(\mathcal{H})$. We define $\mathcal{S}(\mathcal{H})\coloneqq\{M\in\mathcal{B}(\mathcal{H})\vert M=M^*\}$ and the identity operator by $\opid \in \mathcal{S}(\mathcal{H})$. Without ambiguity, we also use the notation $\norm[]{M}$ for the operator norm of $M\in \mathcal{S}(\mathcal{H})$ with respect to $\norm[]{\cdot}$. The partial ordering on $\mathcal{S}(\mathcal{H})$ is given by 
\begin{equation}
    (\forall U\in \mathcal{S}(\mathcal{H}))(\forall V\in\mathcal{S}(\mathcal{H}))\colon\quad U \succeq V\iff (\forall x \in \mathcal{H})\colon\ \scal{U x}{x}\geq \scal{V x}{x}\,.
\end{equation}
For $\sigma\in[0,+\infty)$, we introduce $\mathcal{S}_\sigma(\mathcal{H})\coloneqq\{U\in\mathcal{S}(\mathcal{H})\vert U\succeq\sigma\opid)\}$. Similarly, we introduce $\mathcal{S}_{++}(\mathcal{H})\coloneqq\{U\in\mathcal{S}(\mathcal{H})\vert U\succ0\}$. \textcolor{black}{In particular, $\mathcal{S}_{++}(\R^n)$ denotes the set of $n\times n$ real symmetric positive definite matrices.} The norm $\norm[M]{\cdot}$ is defined by $\sqrt{\scal{M\cdot}{\cdot}}$ for $M\in\mathcal{S}_{++}(\mathcal{H})$. \textcolor{black}{We say $Q\in\mathcal{S}_{0}(\mathcal{H})$ has finite rank $r$ if $r=\mathrm{dim}(\mathrm{im}(Q))$. Then there are linearly independent vectors $u_i$ such that
$\map{Q}{\mathcal{H}}{\mathcal{H}},\ x\mapsto \sum_{i=1}^r \scal{x}{u_i}u_i$. As a consequence, $Q=UU^*$ where $\map{U}{\R^r}{\mathrm{im}(Q)},\,\alpha\mapsto U\alpha\coloneqq \sum^r_{i=1}\alpha_iu_i$ is an isomorphism defined by $(u_i)_{i=1,\ldots,r}$.}

A set valued operator $A\colon\mathcal{H}\righttwoarrow{\mathcal{H}}$ is defined by its graph
\begin{equation*}
    \Graph A \coloneqq \{(x,y)\in \mathcal{H}|x\in \mathrm{Dom}(A), y\in Ax\}\,,
\end{equation*}
and has a domain given by
\begin{equation*}
    \mathrm{Dom}(A)\coloneqq\{ x\in \mathcal{H}|Ax \neq \emptyset\}.
\end{equation*}
Given two set-valued operators $A,B\colon\mathcal{H}\righttwoarrow{\mathcal H}$, we define $A+B\colon\mathcal{H}\righttwoarrow{\mathcal{H}}$ as follows:
\[
\begin{split}
    \mathrm{Dom}(A+B) &= \mathrm{Dom}(A)\cap \mathrm{Dom}(B)\,,\\
    (A+B)x &= Ax + Bx\coloneqq \{y\in\mathcal{H}\vert \exists y_1\in Ax,\exists y_2\in Bx\,\  \mathrm{such\ that}\,\ y=y_1+y_2 \}\,.\\
\end{split}
\]
The inverse of $A$ is denoted by $A^{-1}$ given by $A^{-1}(y)\coloneqq\{x\in\mathcal{H}\vert y\in Ax\}$ and the zero set of $A$ is denoted by $\mathrm{zer}(A+B)\coloneqq\{ x\in\mathcal{H}\vert (A+B)x\ni 0\}$. We say that $A$ is $\gamma_A$-strongly monotone with modulus $\gamma_A\geq 0$ with respect to norm $\norm[]{\cdot}$, if $\scal{x-y}{u-v}\geq \gamma_A\norm[]{x-y}^2$ for any pair $(x,u)\,,(y,v)\in\Graph A$. We say that \textcolor{black}{a single valued} $B$ is $\beta$-co-coercive with respect to norm $\norm[]{\cdot}$, if 
$\scal{x-y}{u-v}\geq \beta\norm[]{u-v}^2$ for any pair $(x,u)\,,(y,v)\in\Graph B$. The resolvent of $A\colon\mathcal{H}\righttwoarrow{\mathcal H}$ with respect to metric $M\in \mathcal{S}_{++}(\mathcal{H})$ is defined as
\begin{equation}
    J^M_A \coloneqq (\opid + M^{-1}A)^{-1}\text{ and we set}\ J_A\coloneqq J_A^I\ \text{for the identity mapping}\,I\,,
\end{equation}
which, as shown for example in \cite{bauschke:hal-00643354}, enjoys the following properties. 
\begin{prop}
Let $A\colon\mathcal{H}\righttwoarrow{\mathcal H}$ be maximally monotone, $M\in\mathcal{S}_{++}(\mathcal{H})$ and $y\in \mathcal{H}$. Then, the following holds 
\begin{equation}
  y = J^M_{\gamma A}(x)\iff x\in y +\gamma M^{-1}Ay\iff x-y\in \gamma M^{-1}Ay \iff (y,\gamma^{-1}M(x-y))\in \Graph A\,.  
\end{equation}
\end{prop}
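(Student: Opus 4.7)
The plan is to prove the chain of equivalences by unwinding the definition of the resolvent and then performing elementary algebraic manipulations, using only that $M$ is invertible (since $M \in \mathcal{S}_{++}(\mathcal{H})$) and that $\gamma \neq 0$. The maximal monotonicity hypothesis on $A$ is not needed for the equivalences themselves; it only serves to guarantee that $J^M_{\gamma A}$ is a well-defined single-valued operator on all of $\mathcal{H}$ (via Minty's theorem applied to the operator $\gamma M^{-1}A$, which is maximally monotone with respect to the $M$-inner product).

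For the first equivalence, I would simply expand the definition $J^M_{\gamma A} = (\opid + M^{-1}\gamma A)^{-1}$. By the definition of the inverse of a set-valued operator, $y = J^M_{\gamma A}(x)$ is equivalent to $x \in (\opid + \gamma M^{-1}A)y$, and by distributivity of the sum of operators stated in the Preliminaries, $(\opid + \gamma M^{-1}A)y = y + \gamma M^{-1}Ay$, yielding $x \in y + \gamma M^{-1}Ay$.

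The second equivalence is a trivial algebraic translation: subtracting the singleton $\{y\}$ from both sides gives $x - y \in \gamma M^{-1}Ay$. The third equivalence uses that $M$ is invertible and $\gamma > 0$: multiplying the inclusion $x - y \in \gamma M^{-1}Ay$ on the left by $\gamma^{-1}M$ (which is a bijection on $\mathcal{H}$) gives $\gamma^{-1}M(x-y) \in Ay$, and this is precisely $(y, \gamma^{-1}M(x-y)) \in \Graph A$ by the definition of the graph.

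There is essentially no obstacle: the result is a direct consequence of the definitions, and the only subtle point is making sure that one interprets $M^{-1}A$ and scalar multiplication on set-valued operators in the standard pointwise sense, so that scaling or applying an invertible linear map to each element of the image set preserves the set-membership equivalences. The maximal monotonicity is cited implicitly only to make the first equality $y = J^M_{\gamma A}(x)$ meaningful as an equation (rather than a set-inclusion), since $J^M_{\gamma A}$ is then single-valued with full domain.
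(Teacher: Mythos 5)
Your proof is correct and is the standard definition-unwinding argument; the paper itself does not prove this proposition but simply cites \cite{bauschke:hal-00643354}, and the argument found there is the same chain of elementary equivalences you give. Your observation that maximal monotonicity is only used to guarantee, via Minty's theorem in the $M$-inner product, that $J^M_{\gamma A}$ is single-valued with full domain (so that the left-hand side is a bona fide equation) is accurate and worth stating.
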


\begin{prop}\label{SeqClosed}
Let $A:\mathcal{H}\righttwoarrow{\mathcal H}$ be maximally monotone. Then for every sequence $(x_k,u_k)_{k\in\mathbb{N}}$ in $\Graph A$ and every $(x,u)\in\mathcal{H}\times\mathcal{H}$, if $x_k\weakto x$ and $u_k\to u$, then $(x,u)\in\Graph A$.
\end{prop}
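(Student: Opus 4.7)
The plan is to exploit the maximality of $A$: to conclude that $(x,u) \in \Graph A$, it suffices to verify the monotonicity inequality of $(x,u)$ against every pair in $\Graph A$, since any point satisfying this inequality with all of $\Graph A$ must itself belong to $\Graph A$ by definition of maximal monotonicity. So first I would fix an arbitrary $(y,v) \in \Graph A$ and aim to show $\scal{x-y}{u-v} \ge 0$.

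Since $(x_k,u_k) \in \Graph A$, monotonicity of $A$ gives $\scal{x_k - y}{u_k - v} \ge 0$ for every $k$. The task then reduces to passing to the limit inside this inner product under mixed weak/strong convergence. To do this cleanly, I would split
\begin{equation*}
\scal{x_k - y}{u_k - v} = \scal{x_k - y}{u - v} + \scal{x_k - y}{u_k - u}\,.
\end{equation*}
The first summand converges to $\scal{x - y}{u - v}$ directly from $x_k \weakto x$. For the second, I would use that weakly convergent sequences are norm-bounded (Banach--Steinhaus), so $\sup_k \|x_k - y\| < \infty$, while $\|u_k - u\| \to 0$ by strong convergence; Cauchy--Schwarz then forces the second summand to vanish.

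Taking limits in the monotonicity inequality yields $\scal{x - y}{u - v} \ge 0$. Since $(y,v)$ was an arbitrary element of $\Graph A$, the pair $(x,u)$ is monotonically related to the whole graph of $A$, so maximality gives $(x,u) \in \Graph A$.

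There is no real obstacle here; the only subtle point is remembering to invoke boundedness of weakly convergent sequences to handle the mixed-topology cross term, and to invoke maximality (rather than mere monotonicity) at the end to upgrade the inequality to actual membership in the graph.
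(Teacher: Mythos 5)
Your proof is correct and is the standard textbook argument for demosntrating weak-strong sequential closedness of the graph of a maximally monotone operator (cf.\ Bauschke--Combettes). The paper states Proposition~\ref{SeqClosed} without proof, treating it as a known result, so there is no in-paper proof to compare against; your decomposition of the cross term, the use of boundedness of the weakly convergent sequence via Banach--Steinhaus, and the final appeal to maximality rather than mere monotonicity are all exactly right.
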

\begin{lemma}\label{lem:JMT}
Let $T\colon\mathcal{H}\righttwoarrow{\mathcal H}$ be a maximally monotone operator and let $M\in \mathcal{S}_{++}(\mathcal{H})$. Then, for any $z\in \mathcal{H}$, we have
$J^M_T(z)= M^{-1/2}\circ J_{M^{-1/2}TM^{-1/2}}\circ M^{1/2}(z )\,.$
\end{lemma}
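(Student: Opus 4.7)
The plan is to chase the definition of the resolvent on both sides and reduce the claimed identity to an algebraic change of variables $w = M^{1/2}y$. By the preceding proposition applied with $\gamma = 1$, $y = J^M_T(z)$ is characterized by the inclusion $M(z-y) \in Ty$. Setting $\tilde T := M^{-1/2} T M^{-1/2}$, the statement to prove is equivalent to: for the unique $y$ satisfying $M(z-y) \in Ty$, the vector $w := M^{1/2}y$ satisfies $w = J_{\tilde T}(M^{1/2}z)$, i.e.\ $M^{1/2}z - w \in \tilde T w$.

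The core step is then a one-line manipulation. Starting from $M(z-y)\in Ty$, I would write $M(z-y) = M^{1/2}(M^{1/2}z - M^{1/2}y) = M^{1/2}(M^{1/2}z - w)$ and $y = M^{-1/2}w$, so the inclusion becomes $M^{1/2}(M^{1/2}z - w) \in T M^{-1/2} w$. Premultiplying by $M^{-1/2}$ (which is a bijection on $\mathcal{H}$ and commutes with set inclusion in the obvious way) yields $M^{1/2}z - w \in M^{-1/2} T M^{-1/2} w = \tilde T w$, which is exactly $w \in (\opid + \tilde T)^{-1}(M^{1/2}z)$. Undoing the substitution gives $y = M^{-1/2} J_{\tilde T}(M^{1/2}z)$.

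The only real obstacle is to ensure that $J_{\tilde T}$ is actually well defined and single-valued, i.e.\ that $\tilde T$ is maximally monotone so that Minty's theorem applies. Monotonicity is immediate: for $(w_i,v_i)\in\Graph\tilde T$ there exist $u_i \in T(M^{-1/2}w_i)$ with $v_i = M^{-1/2}u_i$, and
\begin{equation*}
\scal{w_1-w_2}{v_1-v_2} = \scal{w_1-w_2}{M^{-1/2}(u_1-u_2)} = \scal{M^{-1/2}(w_1-w_2)}{u_1-u_2} \geq 0
\end{equation*}
by monotonicity of $T$. Maximality follows because the map $T \mapsto M^{-1/2} T M^{-1/2}$ preserves the range condition $\mathrm{ran}(\opid + \cdot) = \mathcal{H}$: this is precisely the content of the forward direction of the computation above, showing that $\opid + \tilde T$ is surjective whenever $\opid + M^{-1}T$ is, which holds by maximal monotonicity of $T$ together with $M\in\mathcal{S}_{++}(\mathcal{H})$. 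With this in hand, $y$ and $w$ are uniquely determined by their defining inclusions and the equivalence above proves the lemma.
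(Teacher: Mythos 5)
Your proof is correct and follows essentially the same route as the paper's: both proofs rest on the substitution $w = M^{1/2}y$ together with the algebraic observation that $M(z-y)\in Ty$ is equivalent to $M^{1/2}z - w \in M^{-1/2}TM^{-1/2}w$. The one small difference is that you explicitly verify maximal monotonicity of $\tilde T = M^{-1/2}TM^{-1/2}$ so that $J_{\tilde T}$ is single-valued and everywhere defined, a point the paper's chain of equivalences leaves implicit; this is a harmless and arguably welcome addition.
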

\begin{proof}
    See Appendix~\ref{proof:JMT}.
\end{proof}

\begin{lemma}\label{lemma:JMALip}
    If $A$ is strongly monotone with modulus $\gamma_A\geq 0$ and $M\in\mathcal{S}_{++}$, then $J_A^{M}$ is Lipschitz continuous with respect to  $\norm[M]{\cdot}$ with constant ${1}/(1+\tfrac{\gamma_A}{C})\in (0,1]$ for any $C$ satisfying $\norm[]{M}\leq C<\infty$. 
\end{lemma}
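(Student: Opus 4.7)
The plan is to compute directly from the resolvent identity $M(z-y)\in Ay$ (valid when $y=J_A^M(z)$), combine it with $\gamma_A$-strong monotonicity of $A$, and then convert everything into the $M$-norm via Cauchy--Schwarz, using the bound $\|M\|\leq C$ to pass from the ambient norm to the $M$-norm.

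In detail, I would begin by picking arbitrary $z_1,z_2\in\mathcal{H}$ and setting $y_i\coloneqq J_A^M(z_i)$ for $i=1,2$. The proposition stated just before the lemma gives the characterization $(y_i,M(z_i-y_i))\in\Graph A$, so by $\gamma_A$-strong monotonicity of $A$,
\begin{equation*}
\scal{y_1-y_2}{M(z_1-y_1)-M(z_2-y_2)}\geq \gamma_A\norm{y_1-y_2}^2.
\end{equation*}
Expanding the left-hand side and regrouping the $M$-inner products, this becomes
\begin{equation*}
\scal{y_1-y_2}{z_1-z_2}_M \geq \norm[M]{y_1-y_2}^2+\gamma_A\norm{y_1-y_2}^2.
\end{equation*}

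Next I would apply Cauchy--Schwarz in $\scal{\cdot}{\cdot}_M$ on the left to obtain $\norm[M]{y_1-y_2}\,\norm[M]{z_1-z_2}\geq \norm[M]{y_1-y_2}^2+\gamma_A\norm{y_1-y_2}^2$. The one slightly nontrivial step (and the only place $C$ enters) is to compare $\norm{\cdot}$ and $\norm[M]{\cdot}$: since $\|M\|\leq C$ means $M\preceq C\,\opid$, we get $\norm[M]{y_1-y_2}^2\leq C\norm{y_1-y_2}^2$, i.e.\ $\norm{y_1-y_2}^2\geq \tfrac1C \norm[M]{y_1-y_2}^2$. Substituting this into the previous inequality yields
\begin{equation*}
\norm[M]{y_1-y_2}\,\norm[M]{z_1-z_2}\geq \Bigl(1+\tfrac{\gamma_A}{C}\Bigr)\norm[M]{y_1-y_2}^2.
\end{equation*}

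Dividing both sides by $\norm[M]{y_1-y_2}$ (and handling the trivial case $y_1=y_2$ separately) gives the desired bound
\begin{equation*}
\norm[M]{J_A^M(z_1)-J_A^M(z_2)}\leq \frac{1}{1+\gamma_A/C}\,\norm[M]{z_1-z_2},
\end{equation*}
with constant in $(0,1]$ since $\gamma_A\geq 0$ and $C>0$. No real obstacle is expected; the only conceptual point to get right is that passing between $\norm{\cdot}$ and $\norm[M]{\cdot}$ requires an upper bound on $M$ (hence the constant $C$), which is exactly why the Lipschitz constant involves $C$ rather than $\gamma_A$ alone.
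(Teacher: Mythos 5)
Your proof is correct and follows essentially the same route as the paper's: characterize the resolvent via $(y_i, M(z_i-y_i))\in\Graph A$, invoke $\gamma_A$-strong monotonicity, regroup into $M$-inner products, and pass from $\norm{\cdot}$ to $\norm[M]{\cdot}$ via $\norm{M}\leq C$. The only cosmetic difference is that the paper stops at the cocoercivity inequality $\scal{z_1-z_2}{y_1-y_2}_M\geq(1+\gamma_A/C)\norm[M]{y_1-y_2}^2$ and cites it as implying Lipschitz continuity, whereas you make the final Cauchy--Schwarz step explicit.
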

\begin{proof}
    See Appendix~\ref{PRoof:JMA}.
\end{proof}

\begin{prop}[Variable Metric quasi-Fej\'er monotone sequence \cite{combettes2012variable}]\label{fejer}
Let $\sigma\in (0,+\infty)$, let $\map{\phi}{[0,+\infty)}{[0,+\infty)}$ be strictly increasing and such that $\lim_{t\to+\infty}\phi(t)=+\infty$, let $(M_k)_{k\in\mathbb{N}}$ be in $\mathcal{S}_\sigma(\mathcal{H})$, let $C$ be a nonempty subset of $\mathcal{H}$, and let $(x_k)_{k\in\mathbb{N}}$ be a sequence in $\mathcal{H}$ such that 
\begin{equation}\label{fejerseq}
\begin{split}
    (\exists (\eta_k)_{k\in\mathbb{N}}\in \ell_+^1(\mathbb{N}))(\forall z \in C)(\exists (\epsilon_k)_{k\in\mathbb{N}}\in \ell_+^1(\mathbb{N}))&(\forall k \in \mathbb{N})\colon\\
    &\phi(\norm[M_{k+1}]{x_{k+1}-z}) \leq (1+\eta_k)\phi(\norm[M_k]{x_k-z})+\epsilon_k\,.
\end{split}
\end{equation}
\begin{enumerate}
    \item[(a)] Then $(x_k)_{k\in\mathbb{N}}$ is bounded and, for every $z\in C$, $(\norm[M_k]{z_k-z})_{k\in\mathbb{N}}$ converges.
    \item[(b)] If additionally, there exists $M\in\mathcal{S}_\sigma(\mathcal{H})$ such that $M_k\to M$ pointwisely, as is the case when
\begin{equation}
    \sup_{k\in\mathbb{N}}\norm[]{M_k}< +\infty\quad \mathrm{and}\quad (\exists(\eta_k)_{k\in\mathbb{N}}\in \ell_+^1(\mathbb{N}))(\forall k\in N)\colon\,  (1+\eta_k)M_k\succeq M_{k+1}\,,
\end{equation} then $(x_k)_{k\in \mathbb{N}}$ converges weakly to a point in $C$ if and only if every weak sequential cluster point of $(x_k)_{k\in\mathbb{N}}$ lies in $C$.
\end{enumerate}
\end{prop}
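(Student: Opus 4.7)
Part (a) reduces to a scalar quasi-Fej\'er recursion that is handled by a classical deterministic lemma; boundedness then follows from the coercivity of $\phi$ and the uniform lower bound $M_k \succeq \sigma\opid$. Part (b) is a two-step argument: first bridge convergence of $\|x_k-z\|_{M_k}$ to convergence of $\|x_k-z\|_{M}$ for every $z\in C$, then apply an Opial-type argument to identify the weak limit as a unique point of $C$.

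\textbf{Part (a).} Fix $z \in C$ and set $a_k := \phi(\|x_k-z\|_{M_k})$. Inequality (\ref{fejerseq}) becomes $a_{k+1} \leq (1+\eta_k) a_k + \epsilon_k$ with $(\eta_k), (\epsilon_k) \in \ell^1_+(\mathbb{N})$; the deterministic analogue of the Robbins--Siegmund lemma then yields convergence of $(a_k)$. Since $\phi$ is strictly increasing and coercive, $(\|x_k-z\|_{M_k})$ is bounded, and the estimate $\sigma \|x_k-z\|^2 \leq \|x_k-z\|_{M_k}^2$ gives boundedness of $(x_k)$ in $\mathcal{H}$. Convergence of $\phi(\|x_k-z\|_{M_k})$ upgrades to convergence of $\|x_k-z\|_{M_k}$ itself by a subsequence argument: any two subsequential limits of the latter would have the same image under the strictly monotone $\phi$ (using one-sided limits at potential jump points), forcing them to coincide.

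\textbf{Part (b).} The cornerstone is the identity
\begin{equation*}
\|x_k-z\|_{M_k}^2 - \|x_k-z\|_M^2 = \langle (M_k-M)(x_k-z),\,x_k-z \rangle.
\end{equation*}
The plan is to show that along any weakly convergent subsequence $x_{k_j} \weakto x^*$ the right-hand side tends to $0$; by boundedness of $(x_k)$ this then gives convergence of $\|x_k-z\|_M$ for every $z \in C$, with the same limit as $\|x_k-z\|_{M_k}$ from part (a). Given this, the Opial-type step is classical: for any two weak cluster points $x^*, y^*$ of $(x_k)$, which lie in $C$ by hypothesis, expanding
\begin{equation*}
\|x_k-x^*\|_M^2 - \|x_k-y^*\|_M^2 = 2\langle x_k,\,M(y^*-x^*)\rangle + \|x^*\|_M^2 - \|y^*\|_M^2
\end{equation*}
shows that $\langle x_k, M(y^*-x^*)\rangle$ converges; evaluating along subsequences weakly convergent to $x^*$ and $y^*$ respectively produces $\|x^*-y^*\|_M^2 = 0$, hence $x^* = y^*$. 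Combined with boundedness, this gives weak convergence of $(x_k)$ to a point in $C$; the converse implication is immediate.

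\textbf{Main obstacle.} The delicate step is the bridging identity above: pointwise convergence $M_k \to M$ does not imply uniform convergence, so the cross term along the moving sequence $(x_k)$ is not obviously small. The remedy uses the near-monotonicity condition $(1+\eta_k) M_k \succeq M_{k+1}$: rescaling by $\alpha_k^{-1} := \prod_{j<k}(1+\eta_j)^{-1}$ produces a Loewner-decreasing, hence pointwise convergent, sequence of metrics, which combined with uniform boundedness (either assumed, or derived from the uniform boundedness principle applied to pointwise convergence) yields the stability required to conclude $M_{k_j}(x_{k_j}-z) \weakto M(x^*-z)$ along any weakly convergent subsequence, from which the cross term is dispatched by standard weak-strong pairing arguments.
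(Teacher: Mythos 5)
The paper does not actually prove this proposition; it is cited verbatim from Combettes and V\~u \cite{combettes2012variable}, so there is no ``paper proof'' to compare against. Reviewing your attempt on its own merits:

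Your Part (a) is correct. Fixing $z\in C$ and setting $a_k=\phi(\|x_k-z\|_{M_k})$, the scalar recursion $a_{k+1}\le(1+\eta_k)a_k+\epsilon_k$ together with the deterministic Robbins--Siegmund lemma (Lemma~\ref{lemmaPolyak} in the paper) gives convergence of $a_k$; coercivity of $\phi$ and $M_k\succeq\sigma\opid$ give boundedness of $(x_k)$; and your subsequence argument for upgrading convergence of $\phi(t_k)$ to convergence of $t_k$ is sound: two distinct cluster points $s_1<s_2$ of $t_k:=\|x_k-z\|_{M_k}$ would force $\phi$ to be constant on $(s_1,s_2)$, contradicting strict monotonicity.

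In Part (b) there is a genuine gap, and you have correctly identified its location but not closed it. The ``bridging identity'' requires you to show $\langle (M_k-M)(x_k-z),x_k-z\rangle\to 0$, and while the cross term $\langle x_k,(M_k-M)z\rangle$ and the constant term $\langle(M_k-M)z,z\rangle$ vanish by weak-strong pairing and pointwise convergence respectively, the diagonal term $\langle(M_k-M)x_k,x_k\rangle$ cannot be controlled this way: $M_k-M\to 0$ only pointwise and $(x_k)$ is only bounded, not relatively compact. Your proposed remedy does not resolve this. The fact that $M_{k_j}(x_{k_j}-z)\weakto M(x^*-z)$ pairs a weakly convergent operator image with a weakly convergent vector, which does not let you pass to the limit in the inner product. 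The real fix is that the detour is unnecessary: run the Opial expansion directly in the $M_k$-norms, which you already know converge from part (a). Writing
\begin{equation*}
\norm[M_k]{x_k-x^*}^2 - \norm[M_k]{x_k-y^*}^2 = 2\scal{x_k}{M_k(y^*-x^*)} + \norm[M_k]{x^*}^2 - \norm[M_k]{y^*}^2\,,
\end{equation*}
the left-hand side converges by (a), the last two terms converge by pointwise convergence $M_k\to M$, so $\scal{x_k}{M_k(y^*-x^*)}$ converges to some $\ell$. Now the pairing is weak-against-\emph{strong}, since $M_k(y^*-x^*)\to M(y^*-x^*)$ in norm; evaluating along subsequences converging weakly to $x^*$ and to $y^*$ gives $\ell=\scal{x^*}{M(y^*-x^*)}=\scal{y^*}{M(y^*-x^*)}$, hence $\norm[M]{x^*-y^*}^2=0$ and $x^*=y^*$ as $M\succeq\sigma\opid$. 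This is precisely your final Opial step, just with $M_k$ in place of $M$; no convergence of $\norm[M]{x_k-z}$ is ever needed, and the problematic bridging disappears entirely.
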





A key result for our resolvent calculus in Section~\ref{Section:calc} is the following Attouch-Th\'era abstract duality principle.
\begin{lemma}[A duality result for operators \cite{Attouch}]\label{attouch}
Let $A\colon\mathcal{H}\righttwoarrow{\mathcal H}$ be an operator such that $A^{-1}$ is single-valued and let $\map{B}{\mathcal{H}}{\mathcal{H}}$ be a single-valued operator. Then, the following holds for $x,u\in \mathcal{H}$:
\begin{equation}
    \begin{cases}
    0\in Ax + Bx\\
    0\in B^{-1}u  - A^{-1}(-u)
    \end{cases}
    \iff
    \begin{cases}
    x\in B^{-1}u\\
    -u \in Ax
    \end{cases}
    \iff
    \begin{cases}
    Bx = u\\
    x = A^{-1}(-u)
    \end{cases}\,.
\end{equation}
Moreover, if there exists $x\in \mathcal{H}$ such that $0\in Ax + Bx$ or there exists $u\in \mathcal{H}$ such that $0\in B^{-1}u- A^{-1}(-u)$, then there exists a unique primal-dual pair $(x,u)$ that satisfies the equivalent conditions above.
\end{lemma}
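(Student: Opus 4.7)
The plan is to establish the chain (1) $\Leftrightarrow$ (2) $\Leftrightarrow$ (3) by directly unfolding the definitions of $A^{-1}$ and $B^{-1}$ together with the single-valuedness of $A^{-1}$ and $B$, and then to extract the moreover/uniqueness claim from the same bookkeeping.

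The equivalence (2) $\Leftrightarrow$ (3) is essentially tautological once one notes that $x \in B^{-1}u$ unpacks to $u \in Bx$, which collapses to $u = Bx$ by single-valuedness of $B$, and symmetrically $-u \in Ax$ collapses to $x = A^{-1}(-u)$ by single-valuedness of $A^{-1}$. The implication (2) $\Rightarrow$ (1) is then a one-line computation: $u = Bx$ together with $-u \in Ax$ adds to $0 \in Ax + Bx$, while $x = A^{-1}(-u) \in B^{-1}u$ gives $0 \in B^{-1}u - A^{-1}(-u)$.

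The only delicate direction is (1) $\Rightarrow$ (2). From (1a), since $Bx$ is the unique element of the singleton $\{Bx\}$, the relation $0 \in Ax + Bx$ must decompose as $-Bx \in Ax$, equivalently $x = A^{-1}(-Bx)$. From (1b), the unique element of $A^{-1}(-u)$ must lie in $B^{-1}u$, giving $A^{-1}(-u) \in B^{-1}u$, hence $u = B(A^{-1}(-u))$ by single-valuedness of $B$. The \emph{main obstacle} is that (1a) constrains only $x$ and (1b) only $u$; one has to argue that the single-valuedness assumptions force the coupling $u = Bx$, so that the two identities above collapse to (3) and hence to (2) for the original pair.

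Finally, for the moreover part: if a primal solution $x$ exists, set $u := Bx$ to obtain a pair satisfying (3); if a dual solution $u$ exists, set $x := A^{-1}(-u)$. The mutually inverse assignments $x \mapsto (x, Bx)$ and $u \mapsto (A^{-1}(-u), u)$ yield a bijection between primal and dual solutions via the single-valuedness of $B$ and $A^{-1}$, from which existence and uniqueness of the primal-dual pair $(x,u)$ both follow.
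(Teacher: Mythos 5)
Your treatment of $(2)\Leftrightarrow(3)$ and of $(2)\Rightarrow(1)$ is correct and is the easy content. The problem is the direction $(1)\Rightarrow(2)$: you explicitly flag that $(1a)$ constrains only $x$ and $(1b)$ only $u$, and that ``one has to argue that the single-valuedness assumptions force the coupling $u=Bx$'' --- and then you never supply that argument. This is not a fixable omission, because under the stated hypotheses no such argument exists. Take $\mathcal{H}=\mathbb{R}$, $A=\opid$, $B=-\opid$. Then $A^{-1}=\opid$ is single-valued, $B$ is single-valued, $Ax+Bx=\{0\}$ for every $x$, and $B^{-1}u-A^{-1}(-u)=\{-u\}-\{-u\}=\{0\}$ for every $u$, so the left block holds for \emph{every} pair $(x,u)$; yet the right block requires $u=-x$ and fails for, say, $(x,u)=(1,0)$. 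Hence the leftmost ``$\iff$'' in the displayed chain cannot be obtained from single-valuedness alone, and your sketch does not (and cannot) close this step.

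The same example also undermines the final sentence of your argument for the ``moreover'' part. The correspondence $x\mapsto Bx$ and $u\mapsto A^{-1}(-u)$ between the primal solution set $\{x: 0\in Ax+Bx\}$ and the dual solution set $\{u: 0\in B^{-1}u-A^{-1}(-u)\}$ that you build is correct and is the genuinely useful content; but a bijection only shows the two sets have the same cardinality, not that either is a singleton. In the example above every $(x,-x)$ is a primal-dual pair, so ``unique'' fails and does not ``follow'' as claimed. Since the paper does not reprove this lemma (it only cites Attouch--Th\'era), there is no in-text proof to compare you against; but it is worth noting that where the paper actually invokes the lemma, in the proof of Theorem~\ref{resolvent}, uniqueness of $x^*$ is already supplied externally by single-valuedness of $J^V_T$, and it is that, together with your correct bijection $x^*\leftrightarrow u^*$, that makes the argument there go through.
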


We also need the following lemma which was stated as \cite[Lemma 2.2.2]{Polyak}.
\begin{lemma}\label{lemmaPolyak}
    Let $C_k\geq 0$ and let
    \begin{equation}
    \begin{split}
        &C_{k+1}\leq (1+\nu_{k})C_k + \zeta_k,\quad \nu_k\geq0,\ \zeta_k\geq 0, \\
        &\sum_{k\in\mathbb{N}}\nu_k<\infty,\quad\sum_{k\in\mathbb{N}}\zeta_k<\infty.\\
    \end{split}
    \end{equation}
    Then, $C_k$ converges to a non-negative limit.
\end{lemma}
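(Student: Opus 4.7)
The plan is to eliminate the multiplicative factor $(1+\nu_k)$ by rescaling, reducing the recursion to one without any amplification, and then showing that the rescaled sequence converges by a standard monotone-plus-summable argument.

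Concretely, I would set $P_k \coloneqq \prod_{j=0}^{k-1}(1+\nu_j)$ with the convention $P_0 = 1$. Since $\nu_j \geq 0$ and $\sum_{j}\nu_j < \infty$, the infinite product $P_\infty \coloneqq \prod_{j=0}^{\infty}(1+\nu_j)$ exists and lies in $[1,+\infty)$; moreover $P_k$ is non-decreasing and $P_k \to P_\infty$. Dividing the hypothesis by $P_{k+1}$ and setting $\tilde C_k \coloneqq C_k / P_k \geq 0$, I would obtain
\begin{equation*}
\tilde C_{k+1} \;\leq\; \tilde C_k + \tfrac{\zeta_k}{P_{k+1}} \;\leq\; \tilde C_k + \zeta_k\,,
\end{equation*}
so the multiplicative factor is gone at the cost of a purely additive summable error.

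Next I would exploit summability via a tail-sum trick. Define $S_k \coloneqq \sum_{j=k}^{\infty} \zeta_j/P_{j+1}$, which is finite since $1/P_{j+1}\leq 1$ and $\sum_j \zeta_j < \infty$, and satisfies $S_k \to 0$ as $k \to \infty$. Then the inequality above becomes
\begin{equation*}
\tilde C_{k+1} + S_{k+1} \;\leq\; \tilde C_k + S_k\,,
\end{equation*}
showing that $(\tilde C_k + S_k)_{k\in\mathbb{N}}$ is a non-increasing sequence bounded below by $0$; hence it converges to some finite limit $\tilde C \geq 0$. Since $S_k \to 0$, the sequence $(\tilde C_k)_{k\in\mathbb{N}}$ itself converges to $\tilde C$.

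Finally I would undo the rescaling: $C_k = P_k \tilde C_k \to P_\infty\,\tilde C \eqqcolon C$, which is finite because both factors are finite, and non-negative because both factors are non-negative. This gives the claimed $C_k \to C \in [0,+\infty)$. There is no real obstacle here; the only point to be careful with is justifying that $\sum_{j}\nu_j < \infty$ implies $\prod_j(1+\nu_j) < \infty$ (a standard consequence of $\log(1+\nu_j) \leq \nu_j$), which is what makes the renormalization $\tilde C_k = C_k/P_k$ meaningful and reversible.
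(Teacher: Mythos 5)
Your proof is correct. The paper does not supply its own proof — it cites this as \cite[Lemma 2.2.2]{Polyak} — and your renormalization by the partial products $P_k$ followed by the tail-sum monotonicity argument is essentially the standard proof given there.
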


\section{Inertial Quasi-Newton FBS for Monotone Inclusions}
\subsection{Problem and Algorithms}
We consider the monotone inclusion problem \eqref{vi}.
\textcolor{black}{In order to benefit from additional properties of $A$ and $B$, whenever available, we assume that $A$ and $B$ are strongly monotone with modulus $\gamma_A\geq 0$ and $\gamma_B\geq 0$, respectively.} Note that by setting $\gamma_A=0$ or $\gamma_B=0$, we include the general case of monotone operators that are not necessarily strongly monotone.
 In this paper, we propose two variants of an efficiently implementable quasi-Newton Forward-Backward Splitting (FBS) algorithm in Algorithm~\ref{Alg:mainAlg1} and Algorithm~\ref{Alg:mainAlg2} to solve (\ref{vi}). The main update step is a variable metric FBS step in both algorithms as shown in (\ref{hatz}) and (\ref{hatz2}), i.e., a forward step with respect to the co-coercive operator $B$, followed by a proximal point step (computation of the resolvent) of the maximally monotone operator $A$, both evaluated with an iteration dependent metric $M_k$ that is inspired by quasi-Newton methods and therefore adapts to \textcolor{black}{the local geometry of the problem}. In contrast to related works, as discussed in Section~\ref{relatedwork}, we emphasize the importance of efficiently implementable resolvent operators (see Section~\ref{Section:calc}). The two variants allow for more or less flexibility for the choice of the metric (see Section~\ref{section:0sr1}). Both variants account for potential errors in the evaluation of the forward-backward step.
 Algorithm~\ref{Alg:mainAlg1} combines a FBS step with an additional inertial step which has the potential of accelerating the convergence, as we illuminate in our numerical experiments in Section~\ref{Experiments}. It is a generalization of the algorithms in \cite{chambolle2011first,lorenz2015inertial} to a quasi-Newton variant. In \cite{lorenz2015inertial}, Lorenz and Pock proposed an inertial Forward-Backward Splitting algorithm with a fixed metric which is different from our Algorithm~\ref{Alg:mainAlg1}.  Algorithm~\ref{Alg:mainAlg2}
 combines FBS with a relaxation step in (iii) which yields convergence under weak assumptions on the metric. It generalizes the correction step introduced in \cite{He} which can be retrieved by setting $M_k\equiv M$ and $B\equiv 0$.
\begin{algorithm}[h]
\caption{Inertial Quasi-Newton Forward-Backward Splitting}\label{Alg:mainAlg1}
\begin{algorithmic}

\Require $N \geq 0$, $(\norm[]{\epsilon_k})_{k\in\mathbb{N}}$
\Ensure $z_0\in\mathcal{H}$
\State \textbf{Update for $k = 0,\ldots,N$}:
\begin{enumerate}
    \item[(i)]  Compute $M_k$ according\ to a quasi-Newton framework \textcolor{black}{(see Section~\ref{section:0sr1})}.
    \item[(ii)] Compute the inertial step (extrapolation step):
\begin{equation}\label{inertialstep}
    \bar z_{k} \gets {z}_{k} + \alpha_k({z}_k-z_{k-1})\,,
\end{equation}
\item[(iii)] and the forward-backward step:
\begin{equation}\label{hatz}
    z_{k+1} \gets J_{A}^{M_k}(\bar z_k-M_k^{-1}B\bar z_k) + \epsilon_k\,.
\end{equation}
\end{enumerate}
\State \textbf{End}
\end{algorithmic}
\end{algorithm}

\begin{algorithm}[h]
\caption{Quasi-Newton Forward-Backward Splitting with Relaxation}\label{Alg:mainAlg2}
\begin{algorithmic}

\Require $N \geq 0$, $(\norm[]{\epsilon_k})_{k\in\mathbb{N}}$
\Ensure $z_0\in\mathcal{H}$
\State \textbf{Update for $k= 0,\ldots,N$}
\begin{enumerate}
    \item[(i)] Compute $M_k$ according to a quasi-Newton framework \textcolor{black}{(see Section~\ref{section:0sr1})}.
    \item[(ii)] Compute the forward-backward step:
    \begin{equation}\label{hatz2}
    \tilde z_{k} \gets J_{A}^{M_k}(z_k-M_k^{-1}Bz_k)+\epsilon_k\,,
\end{equation}
\item[(iii)] and relaxation step:
\begin{equation}\label{eq:crelax}
    t_k = \frac{\scal{z_k-\tilde z_k}{(M_k-B)(z_k-\tilde z_k)}}{2\norm[]{(M_k-B)(z_k-\tilde z_k)}^2}\,,
\end{equation}
\begin{equation}
    z_{k+1} \gets z_k-t_k [(M_k-B)(z_k-\tilde z_k)]\,.
\end{equation}
\end{enumerate}

\State \textbf{End}
\end{algorithmic}
\end{algorithm}

\subsection{Convergence Guarantees}
In this subsection, we prove the convergence of Algorithm~\ref{Alg:mainAlg1} and Algorithm~\ref{Alg:mainAlg2}. The implementation details for the specific quasi-Newton features are deferred to Section~\ref{Section:calc}. 
\subsubsection{Algorithm~\ref{Alg:mainAlg1}: Inertial Quasi-Newton Forward-Backward Splitting}
The following convergence result is a generalization of \cite[ Theorem 3.1]{combettes2014forwardbackward} to an inertial version of variable metric Forward-Backward Splitting.
\begin{assum}\label{assumption1} Let $\sigma\in (0,+\infty)$.
$(M_k)_{k\in\mathbb{N}}$ is a sequence in $\mathcal{S}_\sigma (\mathcal{H})$ such that
\begin{equation}
\begin{cases}
    C \coloneqq\mathrm{sup}_{k\in\mathbb{N}}\norm{M_k}<\infty\,,\\
    (\exists(\eta_k)_{k\in\mathbb{N}}\in \ell_+^1(\mathbb{N}))(\forall k\in\mathbb{N})\colon\quad(1+\eta_k)M_{k}\succeq M_{k+1}\,,\\
    \end{cases}
\end{equation}
and $M_k-\tfrac{1}{2\beta}\opid\in \mathcal{S}_{\kappa}(\mathcal{H})$ for all $k\in\mathbb{N}$ and some $\kappa>0$.
\end{assum}

\begin{theorem}\label{thm:1}
Consider Problem (\ref{vi}) and let the sequence $(z_k)_{k\in \mathbb{N}}$ be generated by Algorithm~\ref{Alg:mainAlg1} where Assumption~\ref{assumption1} holds. The sequence $(\alpha_k)_{k\in \mathbb{N}}$ is selected such that $\alpha_k\in(0,\Lambda]$ with $\Lambda<\infty$ and $$\sum_{k\in\mathbb{N}}\alpha_k\max\{\norm[M_k]{z_k-z_{k-1}}, \norm[M_k]{z_k-z_{k-1}}^2\}<+\infty\,,$$ \textcolor{black}{while $(\epsilon_k)_{k\in\N}$ is a sequence in $\mathcal{H}$ such that $\sum_{k\in\N}\norm[]{\epsilon_k}<+\infty$.} Suppose that $\mathrm{zer}(A +B)\neq \emptyset$. Then $(z_k)_{k\in\mathbb{N}}$ \textcolor{black}{is} bounded and weakly \textcolor{black}{converges} to a point $z^*\in \mathrm{zer}(A+B)$, i.e. $z_k\weakto z^*$ as $k\to\infty$. 

Furthermore, if additionally we assume $\epsilon_k\equiv 0$ for any $k\in\mathbb{N}$, $\gamma_A>0$ or $\gamma_B>0$ and $M_k-\tfrac{1}{\beta}\opid\in\mathcal{S}_\kappa(\mathcal{H})$ for some $\kappa>0$, then there exist some $\xi\in(0,1)$, some $\Theta>0$ and some $K_0\in \mathbb{N}$ such that for any $k> K_0$,
\begin{equation}\label{Alg1:rate}
    \norm[M_{k}]{z_{k}-z^*}^2\leq (1-\xi)^{k-K_0}\norm[M_{K_0}]{z_{K_0}-z^*}^2+\sum_{i=K_0}^{k-1}\Theta(1-\xi)^{k-i}\alpha_i\max\{\norm[M_i]{z_i-z_{i-1}},\norm[M_i]{z_i-z_{i-1}}^2\}\,.
\end{equation}

\end{theorem}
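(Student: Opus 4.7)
To prove Theorem \ref{thm:1}, my plan is to derive a quasi-Fej\'er type one-step inequality for $\norm[M_k]{z_k-z^*}^2$ adapted to the inertial update, and then invoke Proposition \ref{fejer} together with Lemma \ref{lemmaPolyak}. The starting point is the resolvent characterization: $z_{k+1}-\epsilon_k=J_A^{M_k}(\bar z_k - M_k^{-1}B\bar z_k)$ yields $M_k(\bar z_k - (z_{k+1}-\epsilon_k)) - B\bar z_k \in A(z_{k+1}-\epsilon_k)$, while $-Bz^*\in Az^*$ for any $z^*\in\mathrm{zer}(A+B)$. Applying $\gamma_A$-strong monotonicity of $A$ at these two graph points together with the $M_k$-polarization identity $2\scal{a-b}{M_k(c-a)}=\norm[M_k]{c-b}^2-\norm[M_k]{c-a}^2-\norm[M_k]{a-b}^2$ gives
\begin{equation*}
\norm[M_k]{z_{k+1}-\epsilon_k-z^*}^2 \leq \norm[M_k]{\bar z_k-z^*}^2 - \norm[M_k]{z_{k+1}-\epsilon_k-\bar z_k}^2 + 2\scal{Bz^*-B\bar z_k}{z_{k+1}-\epsilon_k-z^*} - 2\gamma_A\norm[]{z_{k+1}-\epsilon_k-z^*}^2.
\end{equation*}
I would then split the $B$-inner product across $\bar z_k$, apply $\beta$-cocoercivity to $\scal{B\bar z_k-Bz^*}{\bar z_k-z^*}$ (plus $\gamma_B$-strong monotonicity if $\gamma_B>0$), and Young's inequality with weight $2\beta$ on the leftover piece. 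This cancels $\norm[]{B\bar z_k-Bz^*}^2$ and leaves a residual $\tfrac{1}{2\beta}\norm[]{z_{k+1}-\epsilon_k-\bar z_k}^2$ that the hypothesis $M_k-\tfrac{1}{2\beta}\opid\in\mathcal{S}_\epsilon(\mathcal{H})$ strictly dominates by $\norm[M_k]{z_{k+1}-\epsilon_k-\bar z_k}^2$, leaving the genuinely negative contribution $-\epsilon\norm[]{z_{k+1}-\epsilon_k-\bar z_k}^2$. Reabsorbing the additive perturbation $\epsilon_k$ via a standard quadratic expansion and switching metrics through $(1+\eta_k)M_k\succeq M_{k+1}$ yields a clean inequality of the form $\norm[M_{k+1}]{z_{k+1}-z^*}^2\leq (1+\eta_k)\norm[M_k]{\bar z_k-z^*}^2 - c\norm[]{z_{k+1}-\epsilon_k-\bar z_k}^2 + \rho_k$ with $(\rho_k)$ summable once boundedness of $(z_k)$ is secured.

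The inertial step is handled by expanding $\norm[M_k]{\bar z_k-z^*}^2 = u_k^2 + 2\alpha_k\scal{z_k-z^*}{z_k-z_{k-1}}_{M_k} + \alpha_k^2 v_k^2$ with $u_k\coloneqq\norm[M_k]{z_k-z^*}$ and $v_k\coloneqq\norm[M_k]{z_k-z_{k-1}}$, and bounding the Cauchy--Schwarz cross term via the elementary estimate $2u_kv_k\leq 2v_k(1+u_k^2)$, which follows by case analysis on whether $u_k\leq 1$ or $u_k>1$. This converts the cross term into a summable multiplicative perturbation $2\alpha_kv_k\cdot u_k^2$ plus a summable additive $2\alpha_kv_k$, while $\alpha_k^2v_k^2\leq\Lambda\alpha_kv_k^2$ is summable by hypothesis. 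The recursion therefore takes the quasi-Fej\'er shape $u_{k+1}^2\leq (1+\tilde\eta_k)u_k^2+\tilde\varepsilon_k$ with $(\tilde\eta_k),(\tilde\varepsilon_k)\in\ell_+^1(\mathbb{N})$, so Lemma \ref{lemmaPolyak} delivers boundedness of $(z_k)$ and convergence of $u_k$ for every $z^*\in\mathrm{zer}(A+B)$. Summing the retained negative term additionally furnishes $\sum_k\norm[]{z_{k+1}-\bar z_k}^2<\infty$, hence $z_{k+1}-\bar z_k\to 0$, and since $\alpha_k\norm[]{z_k-z_{k-1}}\to 0$ by the summability hypothesis, also $z_{k+1}-z_k\to 0$.

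The main obstacle I anticipate is identifying every weak cluster point $z^\infty$ of $(z_k)$ with an element of $\mathrm{zer}(A+B)$, which is needed to invoke Proposition \ref{fejer}(b). For a subsequence $z_{k_j}\rightharpoonup z^\infty$, the inclusion $M_{k_j}(\bar z_{k_j}-(z_{k_j+1}-\epsilon_{k_j}))-B\bar z_{k_j}\in A(z_{k_j+1}-\epsilon_{k_j})$ has left-hand side converging strongly to $-Bz^\infty$ (by $z_{k_j+1}-\bar z_{k_j}\to 0$, pointwise convergence of $M_k$ built into Assumption \ref{assumption1}, Lipschitz continuity of $B$ inherited from cocoercivity, and $\norm[]{\epsilon_k}\to 0$), so sequential graph-closedness of $A$ from Proposition \ref{SeqClosed} gives $-Bz^\infty\in Az^\infty$. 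For the linear rate, under the strengthened hypothesis $M_k-\tfrac{1}{\beta}\opid\in\mathcal{S}_\epsilon(\mathcal{H})$ one can perform the Young split with weight $\beta$ instead of $2\beta$, so that a strictly negative contribution $-\gamma_B\norm[]{\bar z_k-z^*}^2$ (or $-2\gamma_A\norm[]{z_{k+1}-z^*}^2$) survives; converting through $\norm[]{\cdot}^2\geq C^{-1}\norm[M_k]{\cdot}^2$ yields a contraction factor $(1-\xi')$ on $\norm[M_k]{\bar z_k-z^*}^2$. With $\epsilon_k\equiv 0$ and the inertial expansion controlled as above, choosing $K_0$ large enough that $(1+\eta_k)(1+2\alpha_kv_k)(1-\xi')\leq 1-\xi$ for all $k\geq K_0$ produces the one-step bound $u_{k+1}^2\leq (1-\xi)u_k^2+\Theta\alpha_k\max\{v_k,v_k^2\}$, which iterates to \eqref{Alg1:rate}.
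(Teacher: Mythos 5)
Your architecture matches the paper's quite closely: a one-step quasi-Fej\'er inequality for $\norm[M_k]{z_k-z^*}^2$, then Lemma \ref{lemmaPolyak} for boundedness and Proposition \ref{fejer}(b) for weak convergence, and the same inertial expansion for the rate. Your route to the one-step bound (resolvent characterization + $\gamma_A$-strong monotonicity + the $M_k$-polarization identity) is equivalent to the paper's firm non-expansiveness of $J^{M_k}_A$, and your treatment of the cross term $2\alpha_k\scal{z_k-z^*}{z_k-z_{k-1}}_{M_k}\leq 2\alpha_k v_k(1+u_k^2)$ is a pleasant shortcut that avoids the paper's auxiliary $C_k$-sequence constructed by induction; this genuinely simplifies the boundedness step.

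There is, however, a real gap in your weak-cluster-point identification. You split $-2\scal{B\bar z_k-Bz^*}{z_{k+1}-\epsilon_k-z^*}$ and apply Young with the \emph{tight} weight $2\beta$, which cancels $\norm[]{B\bar z_k-Bz^*}^2$ completely. The only summable negative remainder is then $-\epsilon\norm[]{\hat z_{k+1}-\bar z_k}^2$, which gives $\hat z_{k+1}-\bar z_k\to 0$ but provides no information about the asymptotics of $B\bar z_k$. You then claim that the left-hand side of $M_{k_j}(\bar z_{k_j}-\hat z_{k_j+1})-B\bar z_{k_j}\in A(\hat z_{k_j+1})$ converges \emph{strongly} to $-Bz^\infty$ by ``Lipschitz continuity of $B$.'' That does not follow: $\bar z_{k_j}\rightharpoonup z^\infty$ only gives weak convergence of $B\bar z_{k_j}$, and a Lipschitz map does not upgrade weak to strong convergence. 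The paper avoids this by \emph{not} cancelling the $B$-term: it retains $p_k=\norm[2\beta\opid-M_k^{-1}]{B\bar z_k-Bz}^2$ in the descent inequality (see (I) in (\ref{non-expansive})), sums it to get $B\bar z_k\to Bz$ strongly for the fixed $z\in\mathrm{zer}(A+B)$, and only then invokes graph-closedness. Your argument can be repaired in either of two ways: use Young with weight strictly less than $2\beta$ so a small multiple of $\norm[]{B\bar z_k-Bz^*}^2$ survives; or, more elegantly, consider $u_{k_j}\coloneqq M_{k_j}(\bar z_{k_j}-\hat z_{k_j+1})+B\hat z_{k_j+1}-B\bar z_{k_j}\in (A+B)\hat z_{k_j+1}$, which \emph{does} converge strongly to $0$ (here Lipschitz continuity of $B$ really is enough, because $\hat z_{k_j+1}-\bar z_{k_j}\to 0$ strongly), and apply Proposition \ref{SeqClosed} to the maximally monotone operator $A+B$. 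As written, though, your application of graph-closedness to $A$ alone, with the claim that $B\bar z_{k_j}\to Bz^\infty$ strongly, is not justified.
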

\begin{proof}
See Appendix~\ref{App:proofthm1}.
\end{proof}

\begin{remark}
\begin{itemize}
\item \textcolor{black}{Using Lemma 3.1 (iv) from \cite{combettes2001quasi}, we deduce that the second term on the right hand of inequality (\ref{Alg1:rate}) converges to 0.}
    \item The linear convergence factor $1-\xi$ is chosen such that there exists $K_0\in\mathbb{N}$ with\\ $$(1+\eta_k)\left(\frac{1-\frac{\gamma_B}{C}}{1+\frac{\gamma_A}{C}}\right)\leq 1-\xi\quad\text{for\ all}\ k\geq K_0\,.$$
    \item The convergence rate for the strongly monotone setting can be influenced by the decay rate of $\alpha_k$. 
\begin{enumerate}
    \item[(i)] If $\alpha_k = O(q^k)$ for  $q=1-\xi$, we have convergence rate of $O(kq^k)$ for $k>K_0$ where $K_0$ is sufficiently large.
    \item[(ii)] If $\alpha_k = O(\tfrac{1}{k^2})$, we have convergence rate of $O(\tfrac{1}{k})$ for $k>K_0$ where $K_0$ is sufficiently large.
\end{enumerate}
\end{itemize}
\end{remark}

\textcolor{black}{
\begin{remark}
In practice, Assumption~\ref{assumption1} is hard to verify and restrictive, however, it is a common assumption for variable metric methods \cite{combettes2014variable}. It can be avoided by relaxation methods which we propose in the next section for this reason.
\end{remark}
}
\subsubsection{Algorithm~\ref{Alg:mainAlg2}: Quasi-Newton Forward-Backward Splitting with Relaxation}
\textcolor{black}{At the expense of a relaxation instead of an inertial step, we achieve a substantial enhancement in the flexibility of selecting the metric. This method is inspired by \cite{He}. It is worth noting that without loss of generality, we assume $\tilde z_k\neq z_k$ for all $k\in\mathbb{N}$, since otherwise $\tilde z_k$ already solves the inclusion problem after a finite number of iterations.}
\begin{assum}\label{Assumption:2}
Let $\sigma\in(0,+\infty)$. $(M_k)_{k\in\mathbb{N}}$ is a sequence in $\mathcal{S}_{\sigma}(\mathcal{H})$ and $(\epsilon_k)_{k\in\mathbb{N}}$ is a sequence in $\mathcal{H}$ such that:
\begin{enumerate}
\item[(i)] For all $k\in\mathbb{N}$, we have $(M_{k}-\tfrac{1}{\beta}\opid)\in \mathcal{S}_{c}(\mathcal{H})$, for some $c>0$,
\item[(ii)] $C\coloneqq\mathrm{sup}_{k\in\mathbb{N}}\norm{M_k}<\infty$.
\end{enumerate}
\end{assum}
\begin{theorem}\label{thm:relax}
Consider Problem (\ref{vi}), and let the sequence $(z_{k})_{k\in\mathbb{N}}$ be generated by Algorithm~\ref{Alg:mainAlg2} where Assumption~\ref{Assumption:2} holds. \textcolor{black}{The sequence $(\epsilon_k)_{k\in\N}$ in $\mathcal{H}$ satisfies $\sum_{k\in\N}\norm[]{\epsilon_k}<+\infty$.} Then $(\norm[]{z_k-z^*})_{k\in\mathbb{N}}$ is bounded for any $z^*\in \mathrm{zer}(A+B)$ and $(z_{k})_{k\in\mathbb{N}}$ weakly converges to some $z^*\in \mathrm{zer}(A+B)$, i.e. $z_k\weakto z^*$ as $k\to \infty$.

Moreover, if $\epsilon_k\equiv 0$, then $\norm[]{z_k-z^*}$ decreases for any $z^*\in \mathrm{zer}(A+B)$ as $k\to\infty$. Furthermore, if $\gamma_A>0$ or $\gamma_B>0$, then $z_k$ converges linearly:
there exist some $\xi\in(0,1)$ such that
\begin{equation}
    \norm[]{z_k-z^*}^2\leq (1- \xi)^k \norm[]{z_0-z^*}^2\,.
\end{equation}

\end{theorem}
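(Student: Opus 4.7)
The strategy is to establish a fixed-norm Fejér-monotone estimate
\[
\|z_{k+1}-z^*\|^2 \le \|z_k - z^*\|^2 - \delta_k + \nu_k
\]
with $\delta_k \ge 0$ and summable $\nu_k$, then invoke Lemma \ref{lemmaPolyak} to extract convergence of $(\|z_k-z^*\|)$ and summability of $(\delta_k)$, and finally pass to the limit in the forward-backward inclusion to obtain weak convergence. Throughout, I would work with the exact forward-backward step $\hat z_k := J_A^{M_k}(z_k - M_k^{-1}Bz_k) = \tilde z_k - \epsilon_k$, whose defining inclusion reads $M_k(z_k - \hat z_k) - B z_k \in A \hat z_k$, and decompose the update direction as $d_k = a_k + r_k$, where $a_k := M_k(z_k - \hat z_k) - (Bz_k - B\hat z_k)$ is the error-free direction and the remainder $r_k := -M_k\epsilon_k + B\tilde z_k - B\hat z_k$ satisfies $\|r_k\| \le (C + 1/\beta)\|\epsilon_k\|$.

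The central monotonicity estimates,
\[
\langle a_k, z_k - \hat z_k\rangle \ge c\|z_k-\hat z_k\|^2, \qquad \langle a_k, z_k - z^*\rangle \ge c\|z_k-\hat z_k\|^2 + (\gamma_A+\gamma_B)\|\hat z_k-z^*\|^2,
\]
are obtained by splitting $z_k - z^* = (z_k - \hat z_k) + (\hat z_k - z^*)$. The first follows from $M_k - \tfrac{1}{\beta}\opid \succeq c\opid$ (Assumption \ref{Assumption:2}(i)) combined with the Cauchy--Schwarz/cocoercivity bound $\langle Bz_k - B\hat z_k, z_k - \hat z_k\rangle \le \tfrac{1}{\beta}\|z_k - \hat z_k\|^2$; the second uses monotonicity of $A$ applied to the pairs $(\hat z_k, M_k(z_k-\hat z_k)-Bz_k)$ and $(z^*, -Bz^*)$, together with $\gamma_B$-strong monotonicity of $B$. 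Substituting the relaxation parameter $t_k = \frac{\langle z_k - \tilde z_k, d_k\rangle}{2\|d_k\|^2}$ into $\|z_{k+1}-z^*\|^2 = \|z_k-z^*\|^2 - 2t_k\langle d_k, z_k - z^*\rangle + t_k^2\|d_k\|^2$ and absorbing the cross-terms involving $r_k$ by Young's inequality (using $\|d_k\| \le (C+\tfrac{1}{\beta})\|z_k - \tilde z_k\|$ to control $|t_k|$) should produce the Fejér estimate with $\delta_k$ proportional to $\|z_k - \hat z_k\|^2$ and $\nu_k$ of order $\|\epsilon_k\|(1 + \|z_k-z^*\|) + \|\epsilon_k\|^2$. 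Lemma \ref{lemmaPolyak} then yields boundedness and convergence of $(\|z_k - z^*\|)$ for every $z^*\in\mathrm{zer}(A+B)$ together with $\sum_k \|z_k - \hat z_k\|^2 < \infty$, so $\|z_k - \tilde z_k\|\to 0$.

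For weak convergence, let $\bar z$ be any weak sequential cluster point of $(z_k)$ along $(z_{k_j})$; then also $\hat z_{k_j}\weakto \bar z$. Rewriting the inclusion as $M_{k_j}(z_{k_j} - \hat z_{k_j}) + B\hat z_{k_j} - B z_{k_j} \in (A+B)\hat z_{k_j}$, the left-hand side converges to $0$ in norm, by boundedness of $(\|M_{k_j}\|)$ and $(1/\beta)$-Lipschitz continuity of $B$. Since $A+B$ is maximally monotone, Proposition \ref{SeqClosed} gives $0\in (A+B)\bar z$, so every weak cluster point lies in $\mathrm{zer}(A+B)$. Combined with the already-established convergence of $(\|z_k - z^*\|)$ for every $z^* \in \mathrm{zer}(A+B)$, Opial's lemma (the fixed-metric specialization of Proposition \ref{fejer}(b)) forces a single weak limit. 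The monotone-decrease statement for $\epsilon_k\equiv 0$ is then immediate, since the Fejér inequality holds without the $\nu_k$ term.

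The hardest part will be the linear rate: the Fejér inequality decreases $\|z_k - z^*\|^2$ by a multiple of $\|z_k - \hat z_k\|^2$, not directly by $\xi\|z_k - z^*\|^2$. Under $\gamma_A + \gamma_B > 0$ and the strengthened bound $M_k - \tfrac{1}{\beta}\opid \succeq \epsilon\opid$, I would prove $\|z_k - \hat z_k\|^2 \ge \lambda\|z_k - z^*\|^2$ for some $\lambda>0$ independent of $k$ by a case split. If $\|\hat z_k - z^*\| \le \tfrac{1}{2}\|z_k - z^*\|$, the triangle inequality yields $\|z_k - \hat z_k\| \ge \tfrac{1}{2}\|z_k - z^*\|$ at once. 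Otherwise, combining the lower bound $\langle a_k, z_k - z^*\rangle \ge (\gamma_A+\gamma_B)\|\hat z_k - z^*\|^2 \ge \tfrac{\gamma_A+\gamma_B}{4}\|z_k - z^*\|^2$ with the Cauchy--Schwarz upper bound $\langle a_k, z_k - z^*\rangle \le \|a_k\|\,\|z_k - z^*\| \le (C + 1/\beta)\|z_k - \hat z_k\|\,\|z_k - z^*\|$ forces $\|z_k - \hat z_k\| \ge \tfrac{\gamma_A+\gamma_B}{4(C+1/\beta)}\|z_k - z^*\|$. Either way $\|z_k - \hat z_k\|^2 \ge \lambda\|z_k - z^*\|^2$, and the error-free Fejér inequality collapses into $\|z_{k+1}-z^*\|^2 \le (1-\xi)\|z_k - z^*\|^2$ for some $\xi\in(0,1)$, which iterates to the stated geometric rate.
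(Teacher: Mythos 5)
Your proposal reproduces the paper's argument essentially step for step: decompose the update direction into the error-free piece $(M_k - B)(z_k - \hat z_k)$ and a remainder controlled by $\|\epsilon_k\|$, bound $\langle d_k, z_k - z^*\rangle$ via the monotonicity inequalities $\langle (M_k - B)(z_k-\hat z_k), z_k - \hat z_k\rangle \geq c\|z_k - \hat z_k\|^2$ and $\langle (M_k - B)(z_k-\hat z_k), \hat z_k - z^*\rangle \geq (\gamma_A + \gamma_B)\|\hat z_k - z^*\|^2$, plug in the algorithm's $t_k = \langle z_k - \tilde z_k, d_k\rangle / (2\|d_k\|^2)$, invoke Lemma \ref{lemmaPolyak} for the quasi-Fejér estimate, show that the consequent $\sum_k\|z_k - \hat z_k\|^2 < \infty$ drives $M_k(z_k - \hat z_k) + B\hat z_k - B z_k \in (A+B)\hat z_k$ to zero in norm, and close via Proposition \ref{SeqClosed} and Proposition \ref{fejer}. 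This is exactly the structure of the paper's Appendix A.4.

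The one place you diverge is the last step of the linear rate. Having reached a descent inequality of the form $\|z_{k+1}-z^*\|^2 \leq \|z_k-z^*\|^2 - c_1\|z_k-\hat z_k\|^2 - c_2\|\hat z_k - z^*\|^2$, the paper keeps both negative terms and finishes with the elementary bound $2a^2 + 2b^2 \geq (a+b)^2$ followed by the triangle inequality $\|z_k - \hat z_k\| + \|\hat z_k - z^*\| \geq \|z_k - z^*\|$, yielding $\xi = \tfrac{1}{2}\min\{2(\gamma_A+\gamma_B)\delta, c\delta\}$ in one line. You instead discard the $\|\hat z_k - z^*\|^2$ contribution and prove $\|z_k - \hat z_k\|^2 \geq \lambda\|z_k - z^*\|^2$ by a two-case dichotomy, converting a lower bound on $\|\hat z_k - z^*\|$ into one on $\|z_k - \hat z_k\|$ via Cauchy--Schwarz on $\langle a_k, z_k - z^*\rangle$. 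Both are correct; your case split is marginally longer and gives a slightly worse constant but has the virtue of isolating exactly which of the two distances (residual $\|z_k - \hat z_k\|$ or optimality gap $\|\hat z_k - z^*\|$) must be large. One caution: when you write the monotonicity estimate as $\langle a_k, z_k - z^*\rangle \geq c\|z_k-\hat z_k\|^2 + (\gamma_A+\gamma_B)\|\hat z_k - z^*\|^2$, make sure that in the Fejér bookkeeping you retain the sharper $\langle a_k, z_k - \hat z_k\rangle$ term exactly (rather than replacing it immediately by $c\|z_k-\hat z_k\|^2$), since only then does the $t_k^2\|d_k\|^2$ term in the expansion of $\|z_{k+1}-z^*\|^2$ cancel against $-2t_k\langle a_k, z_k - \hat z_k\rangle$; the weakened form alone does not guarantee the net term is negative.
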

\begin{proof}
    See Appendix~\ref{proof:thm:relax}.
\end{proof}
\begin{remark}
    The linear convergence factor is given by $$\xi=\tfrac{\delta}{2}\min\{2(\gamma_A+\gamma_B), c\}\,,\quad \text{where}\ \delta=\tfrac{c}{2(C+1/\beta)^2}\,.$$
\end{remark}
\textcolor{black}{
\begin{remark}
   In \cite{He}, authors studied a relaxed proximal point algorithm for primal-dual splitting with a fixed metric. Our setting and Algorithm~\ref{Alg:mainAlg2} here are much broader.
\end{remark}
}
\begin{remark}
It is \textcolor{black}{noteworthy} that Algorithm~\ref{Alg:mainAlg2} can be interpreted as a closed loop system which uses the previous iterates (states) to update the relaxation parameter $t_k$ and the quasi-Newton metric $M_k$, i.e., the update does not explicitly depend on $k$.
\end{remark}
\section{Resolvent Calculus for Low-Rank Perturbed Metric}\label{Section:calc}
In this section, we extend the proximal calculus of \cite{becker2019quasi} to the setting of resolvent operators $J^V_T$ with a symmetric positive definite metric \textcolor{black}{$V = M+\mathrm{s} Q$}, \textcolor{black}{where $\mathrm{s}\in\{-1,+1\}$, $M$ is symmetric positive definite and $Q$ is symmetric positive semi-definite.} \textcolor{black}{This is a key result, as it enables an efficient application of quasi-Newton methods for solving monotone inclusion problems.} \textcolor{black}{Computing the resolvent operator $J^{V}_T(z)$ involves evaluating $J^{M}_T$ at a shifted point $z-M^{-1}v^*\in\mathcal{H}$, where $v^*\in \mathcal{H}$ is derived from a root-finding problem, solvable by a semi-smooth Newton method (Algorithm~\ref{Alg:Semismooth}) or bisection method (Algorithm~\ref{Alg:bisection}).} In conclusion, if $J^M_T$ can be computed efficiently, the same is true for $J_T^V$. The result crucially relies on the abstract duality principle of Attouch-Th\'era \cite{Attouch} (see Lemma~\ref{attouch}). \textcolor{black}{We first state the abstract result in a Hilbert space $\mathcal{H}$ in Theorem~\ref{thm:resolvent} and illustrate it in Corollary~\ref{coro:resolvent} with $\mathcal H=\R^n,n\in\N$.}
\begin{theorem}\label{thm:resolvent}
Let $T\colon\mathcal{H}\righttwoarrow{\mathcal H}$ be a maximally monotone operator, \textcolor{black}{$V\coloneqq M+\mathrm{s} Q\textcolor{black}{\in\mathcal{S}_{++}(\mathcal{H})}$ where $\mathrm{s}\in\{-1,1\}$}, $M\in\mathcal{S}_{++}(\mathcal{H})$ and $Q\in\mathcal{S}_0(\mathcal{H})$ having finite rank r. Then, the resolvent operator $J^V_T$ can be computed as follows:
\begin{equation}\label{eq:mainthm}
    x^* = J^V_T(z)\quad\iff\quad\begin{cases} x^*=J^{M}_T(z\textcolor{black}{-\mathrm{s}}M^{-1}\textcolor{black}{U\alpha^*})\, \text{ and }\\\alpha^*\in\R^r\, \text{ solves }\, \textcolor{black}{\ell(\alpha) = 0} \\
     \text{ where } \ell(\alpha)\coloneqq U^* \textcolor{black}{Q^{-1}} U\alpha + U^*(z - J^M_T(z\textcolor{black}{-\mathrm{s}} M^{-1}U\alpha))=0\,,\\\end{cases}
\end{equation}
where $\map{U}{\R^r}{\mathrm{im}(Q)},\,\alpha\mapsto U\alpha\coloneqq \sum^r_{i=1}\alpha_iu_i$ is an isomorphism defined by \textcolor{black}{any} $r$ linearly independent $u_1,...,u_r\in \mathrm{im}(Q)$. The function $\map{\ell}{\R^r}{\R^r}$ is Lipschitz continuous with constant $\norm{U^*\textcolor{black}{Q^{-1}}U}+\norm{M^{-1/2}U}^2$ and strictly monotone.
\end{theorem}
\begin{proof}
    See Appendix~\ref{PRoof:resolvent}.
\end{proof}
\textcolor{black}{
\begin{remark}\label{remark:single}
     A priori $Q^{-1}$ is set-valued, however it is easy to check that $U^*Q^{-1}U$ is single-valued (see Appendix~\ref{Proof:remark-single valued}). We define
     $\map{Q^+}{\mathrm{im}(Q)}{\mathrm{im}(Q)}$ as the inverse of $Q$ restricted to $\mathrm{im}(Q)$ which is a single-valued mapping. It allows us to replace $Q^{-1}$ by $Q^+$ in \eqref{eq:mainthm}.
\end{remark}
}
\textcolor{black}{In finite dimensions, Theorem~\ref{thm:resolvent} simplifies to the following corollary.}
\textcolor{black}{
\begin{corollary}\label{coro:resolvent}
Let $T\colon\R^n\righttwoarrow{  \R^n }$ be a maximally monotone operator and consider $V\coloneqq M+\mathrm{s} Q\textcolor{black}{\in\mathcal{S}_{++}(\R^n)}$ where $\mathrm{s}\in\{-1,+1\}$, $M\in\mathcal{S}_{++}(\R^n)$ and $Q\in\mathcal{S}_0(\R^n)$. Let $Q =UU^\top$ where $\map{U}{\R^r}{\R^n}$ is a matrix of full rank $r$ with $r\leq n$. Then, the resolvent operator $J^V_T$ can be computed as follows:
\begin{equation}\label{eq:la}
    x^* = J^V_T(z)\quad\iff\quad\begin{cases} x^*=J^{M}_T(z \textcolor{black}{-\mathrm{s}} M^{-1}U\alpha^*)\, \text{ and }\\\alpha^*\in\R^r\, \text{ solves }\, \textcolor{black}{\ell(\alpha) = 0}\\ \textcolor{black}{\text{ where }\,\ell(\alpha)\coloneqq \alpha + U^\top(z - J^{M}_T(z-\mathrm{s} M^{-1}U\alpha))\,.}\end{cases}
\end{equation}
The solution $\alpha^*$ is the unique root of $\map{\ell}{\R^r}{\R^r}$ which is Lipschitz continuous with constant $1+\norm{M^{-1/2}U}^2$ and strictly monotone.
\end{corollary}
}
\begin{proof}
See Appendix~\ref{Proof:coro:resolvent}.
\end{proof}
\textcolor{black}{If $r\ll n$, then we have a so-called low rank perturbed metric $M+\mathrm{s} UU^\top$, which leads to a root-finding problem of low dimension $r$. Together with the simple metric $M$ with respect to which the resolvent operator is easy to evaluate, it leads to an efficient evaluation of a resolvent operator with respect to a low rank perturbed metric.
}

\subsection{Solving the Root-Finding Problem}
The efficiency of the reduction in Theorem~\ref{thm:resolvent} relies also on the solution of a root-finding problem which we discuss thoroughly in this subsection. \textcolor{black}{We consider the space $\R^r$ and the root-finding problem with $\map{\ell}{\R^r}{\R^r}$.} \textcolor{black}{In several instances, the root-finding problem can be solved exactly, for instance, with $T=\partial g$ for special functions $g$ as enumerated in \cite[Table 3.1]{becker2019quasi}. In such cases, the root-finding problem simplifies to one involving the proximal operator rather than the resolvent. Similarly, when \textcolor{black}{$J^{M}_T$} can be represented as a composition of proximal mappings with respect to these special functions, the associated root-finding problem can be exactly solved.} 
\textcolor{black}{In situations where this subproblem cannot be exactly solved, we employ a semi-smooth Newton approach which enjoys local super-linear convergence.}  \textcolor{black}{Therefore, our Algorithms~\ref{Alg:mainAlg1} and~\ref{Alg:mainAlg2} have also modeled computational errors arising from inexactly solving the root-finding problem.}
To narrow down the neighborhood of the sought root for $r=1$, we complement the semi-smooth Newton strategy by a bisection method in Section~\ref{section:bi}. \textcolor{black}{For cases where $r\geq 1$, a globalization strategy is available as shown in \cite{solodov1999globally}}.
\subsubsection{Semi-smooth Newton Methods}
In order to solve $\ell(\alpha)=0$ in (\ref{eq:mainthm}) efficiently, we employ a semi-smooth Newton method. A locally Lipschitz function is called semi-smooth if its Clarke Jacobian defines a Newton approximation scheme \cite[Definition 7.4.2]{facchinei2003finite}. \textcolor{black}{If $\ell(\alpha)$ is semi-smooth and any element of the Clarke Jacobian $\partial ^C\ell(\alpha^*)$ is non-singular, then the inexact semi-smooth Newton method outlined in Algorithm~\ref{Alg:Semismooth} (analogous to \cite{becker2019quasi}) can be applied.}
\begin{algorithm}[htp]
\caption{Semi-smooth Newton method to solve $\ell(\alpha)=0$}\label{Alg:Semismooth}
\begin{algorithmic}

\Require A point $\alpha_0\in\R^r$. $N$ is the maximal number of iterations.
\State \textbf{Update for $k = 0,\ldots,N$}:
\If{$l(\alpha_k)=0$}
    \State \textbf{stop}
\Else{}
    \State Select $G_k\in\partial^C \ell(\alpha_k)$, compute $\alpha_{k+1}$ such that
    \[\ell(\alpha_k)+G_k(\alpha_{k+1}-\alpha_k)=e_k\,,\]
    where $e_k\in\R^r$ is an error term satisfying $\norm[]{e_k}\leq \eta_k\norm[]{G_k}$ and $\eta_k\geq 0$.
\EndIf

\State \textbf{End}
\end{algorithmic}
\end{algorithm}
Semi-smoothness may seem restrictive. However, as shown in \cite{bolte2009tame}, the broad class of tame locally Lipschitz functions is semi-smooth. We refer to \cite{bolte2009tame} for the definition of tameness. Therefore, it is sufficient to ensure $\ell(\alpha)$ is tame, which is asserted if the monotone operator $T$ in $\ell(\alpha)$ is a tame map \cite{ioffe2009invitation}. In this case, the convergence result for Algorithm~\ref{Alg:Semismooth} can be adapted from \cite{becker2019quasi}.
\begin{prop}\label{semi-smooth-Newton}Let $\ell(\alpha)$ be defined as in Theorem~\ref{thm:resolvent}, where $T$ is a set-valued tame mapping. Then $\ell(\alpha)$ is semi-smooth and all elements of $\partial^C\ell(\alpha^*)$ are non-singular where $\alpha^*$ is the unique root of $\ell(\alpha)$ from (\ref{eq:mainthm}). In turn there exists $\bar \eta$ such that if $\eta_k\leq \bar \eta$ for every $k$, there exists a neighborhood of $\alpha^*$ such that for all $\alpha_0$ in that neighborhood, the sequence generated by Algorithm~\ref{Alg:Semismooth} is well defined and converges to $\alpha^*$ linearly. If $\eta_k\to 0$, the convergence is superlinear.
\end{prop}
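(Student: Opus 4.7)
The plan is to verify the two hypotheses that make the inexact semi-smooth Newton method converge locally -- (i) semi-smoothness of $l$ at $\alpha^*$ and (ii) nonsingularity of every element of $\partial^c l(\alpha^*)$ -- and then to invoke the classical local convergence theorem for the inexact semi-smooth Newton method, e.g.\ \cite[Theorem 7.5.2]{facchinei2003finite}. Once (i) and (ii) are in hand, the remaining argument is a transcription of the proof in \cite{becker2019quasi} from the proximal to the resolvent setting.

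For (i) the key is to argue that $l$ is tame, after which \cite{bolte2009tame} (tame locally Lipschitz functions are semi-smooth) closes the argument. From the graph characterization $(x,y)\in\Graph J^M_T\iff(y,M(x-y))\in\Graph T$, the graph of $J^M_T$ is obtained from $\Graph T$ via a linear transformation and graph inversion, both of which preserve definability in any o-minimal structure; hence $J^M_T$ is tame whenever $T$ is. The mapping $l$ of (\ref{eq:la}) is then a sum and composition of tame objects (the linear operators $U^*Q^+U$, $\mp M^{-1}U$, $U^*$, together with $J^M_T$), so $l$ is tame, and combined with the Lipschitz property established in Theorem \ref{resolvent} this yields semi-smoothness.

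For (ii) I would compute $\partial^c l(\alpha^*)$ via the Clarke chain rule and exploit the firm $M$-nonexpansiveness of $J^M_T$. Any $G$ in the resulting upper set has the form $G=U^*Q^+U+U^*H M^{-1}U$ (the sign choices in $V=M\pm Q$ combine with the sign inside the shift $z\mp M^{-1}U\alpha^*$ to yield the same expression in both cases) for some $H\in\partial^c J^M_T$ at the appropriate point. Passing to the limit in the firm-$M$-nonexpansiveness inequality
\[
\scal{M(x-y)}{J^M_T x-J^M_T y}\geq\norm[M]{J^M_T x-J^M_T y}^2
\]
along directions $v$ yields $MH+H^*M\succeq 2H^*MH\succeq 0$, and conjugation by $M^{-1}$ gives $HM^{-1}+M^{-1}H^*\succeq 0$. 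Since $U\colon\mathbb{R}^r\to\mathrm{im}(Q)$ is an isomorphism and $Q^+$ is positive definite on $\mathrm{im}(Q)$, we have $U^*Q^+U\succ 0$, and therefore
\[
G+G^*=2U^*Q^+U+U^*(HM^{-1}+M^{-1}H^*)U\succ 0\,,
\]
so every element of the upper set, and in particular of $\partial^c l(\alpha^*)$, is nonsingular.

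With semi-smoothness and nonsingularity of every element of $\partial^c l(\alpha^*)$, the three claims -- local well-definedness of the iterates, linear convergence under $\eta_k\leq\bar\eta$, and superlinear convergence under $\eta_k\to 0$ -- follow from the standard convergence theorem for the inexact semi-smooth Newton method. The main obstacle, as in \cite{becker2019quasi}, is the careful sign bookkeeping between the cases $V=M\pm Q$ and the honest invocation of the Clarke chain rule, which in general yields only an upper inclusion; this inclusion is nevertheless sufficient since nonsingularity is established for every element of the super-set.
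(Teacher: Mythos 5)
Your tameness/semi-smoothness step is essentially the paper's argument: both pass through definability of $\Graph J^M_T$ (or equivalently $\Graph J_{M^{-1/2}TM^{-1/2}}$) under graph inversion and linear images, and then cite \cite{bolte2009tame}. Where you diverge is the nonsingularity of $\partial^c l(\alpha^*)$: the paper simply cites ``the same argument as Appendix B.5 of \cite{becker2019quasi}'', whereas you write out a concrete argument via the Clarke chain rule and firm $M$-nonexpansiveness of $J^M_T$. This is a useful fleshing-out of what the paper delegates, and the route through $H^*MH+(\opid-H)^*M(\opid-H)\preceq M$ is the right one. (A minor point: the inexact semi-smooth Newton theorem you want is \cite[Theorem~7.5.5]{facchinei2003finite}, which the paper cites; 7.5.2 is the exact variant.)

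There is, however, a genuine sign error in your nonsingularity step. The signs in $V=M\pm Q$ and in the shift $z\mp M^{-1}U\alpha$ are \emph{linked}, not independent, so they do not cancel. Differentiating $l(\alpha)=U^*Q^+U\alpha+U^*\bigl(z-J^M_T(z\mp M^{-1}U\alpha)\bigr)$ gives $G=U^*Q^+U\pm U^*HM^{-1}U$ with the \emph{same} sign as in $V=M\pm Q$: for $V=M+Q$ you get $G+G^*=2U^*Q^+U+U^*(HM^{-1}+M^{-1}H^*)U\succ 0$ exactly as you argue, but for $V=M-Q$ you get $G+G^*=2U^*Q^+U-U^*(HM^{-1}+M^{-1}H^*)U$, and your inequality $HM^{-1}+M^{-1}H^*\succeq 0$ now pushes in the wrong direction. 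The fix needs two additional facts you do not state. First, firm $M$-nonexpansiveness also gives the upper bound $H^*MH\preceq M$, hence $MH+H^*M\preceq M+H^*MH\preceq 2M$ and therefore $HM^{-1}+M^{-1}H^*\preceq 2M^{-1}$. Second, the strict monotonicity argument in the proof of Theorem~\ref{resolvent} already shows $Q^+-M^{-1}\succ 0$ on $\mathrm{im}(Q)$ whenever $V=M-Q\succ 0$. Combining,
\[
G+G^*\;\succeq\;2U^*Q^+U-2U^*M^{-1}U\;=\;2U^*(Q^+-M^{-1})U\;\succ\;0\,,
\]
which restores nonsingularity in the ``$-$'' case. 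Without this supplement, your claim that ``the sign choices $\ldots$ yield the same expression in both cases'' is false and the argument only covers $V=M+Q$.
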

\begin{proof}
See Appendix~\ref{Proof:semi-smooth-Newton}.
\end{proof}

\begin{ex}
If $ f$ is a tame function and locally Lipschitz, then by \cite[Proposition 3.1]{ioffe2009invitation}, $\partial f$ is a tame map.
\end{ex}
\begin{ex}
The assumption that $T$ is a tame mapping is not restrictive. For example, in PDHG setting we have a set-valued operator $T=\begin{pmatrix}\partial g&K^*\\-K&\partial f\end{pmatrix}$ as defined by (\ref{op:T}). If $g$ and $f$ are both tame functions, then $\partial g$ and $\partial f$ are tame as well \cite{ioffe2009invitation}. As a result, $T$ is a tame mapping.
\end{ex}
\subsubsection{Bisection}\label{section:bi}
\textcolor{black}{In the case where $\mathcal{H}=\R^n$ and $r=1$, we set $U=u\in \R^{n\times 1}$} and solve the root-finding problem $\ell(\alpha)=0$ via the bisection method in Algorithm~\ref{Alg:bisection}. A similar bound on the range of $\alpha^*$ as in \cite{becker2019quasi} holds.
\begin{prop}\label{prop:bound} \textcolor{black}{Consider the setting of Corollary~\ref{coro:resolvent}}. 
For $r=1$, the root $\alpha^*$ of $\ell(\alpha) =0$ in Corollary~\ref{coro:resolvent} lies in the  set $[-\zeta,\zeta]$,  where
\begin{equation}\label{bound}
    \zeta = \norm[V^{-1}]{u}(2\norm[V]{z} + \norm[V]{J^V_T(0)})\,.
\end{equation}
Moreover, if $V\in\mathcal{S}_{c}(\R^n)$ and $\norm[]{V}\leq C$, then
\begin{equation}
    \zeta = \frac{C}{c}\norm[]{u}(2\norm[]{z} + \norm[]{J^V_T(0)})\,.
\end{equation}
\end{prop}
\begin{proof}
See Appendix~\ref{Proof:bound}.
\end{proof}
\begin{algorithm}[htp]
\caption{Bisection method to solve $\ell(\alpha)=0$ when $r=1$}\label{Alg:bisection}
\begin{algorithmic}

\Require Tolerance $\epsilon \geq 0$, number of iterates $N$
\State Compute the bound $\zeta$ from (\ref{bound}), and set $k=0$.
\State Set $\alpha_{-} = -\zeta$ and $\alpha_{+}=\zeta$. 
\State \textbf{Update for $k= 0,\ldots,N$}:
\State Set $\alpha_k =\tfrac{1}{2}(\alpha_{-}+\alpha_{+})$.
\If{$\ell(\alpha_k)>0$}
    \State $\alpha_{+}\gets \alpha_k$
\Else{}
    \State $\alpha_{-}\gets \alpha_k$
\EndIf
\If{$k>1$ and $\vert \alpha_k -\alpha_{k-1}\vert < \epsilon$}
    \State return $\alpha_k$
\EndIf
\State \textbf{End}
\end{algorithmic}
\end{algorithm}
Furthermore, we can combine the semi-smooth Newton method with the bisection method. Since the semi-smooth Newton method is locally convergent, it requires a starting point in a sufficiently near neighborhood of the solution. \textcolor{black}{Using the bisection method, we can generate a sequence of points approaching the solution. When these points reach the neighborhood required for convergence of the semi-smooth Newton method, we transition to using the semi-smooth Newton method to achieve faster convergence.}
In Proposition~\ref{semi-smooth-Newton}, $\alpha_0$ is required to belong to a neighborhood of $\alpha^*$, which can be achieved by Algorithm~\ref{Alg:bisection}, i.e., we can assert to find a point $\alpha$ such that $\vert\alpha-\alpha^*\vert<\delta$ in $\log_2((2\zeta/\delta))$ steps, where $\zeta$ is as in (\ref{bound}).
\subsection{Implementation of the quasi-Newton Forward-Backward Step}
\textcolor{black}{In Algorithm~\ref{Alg:mainAlg1} and~\ref{Alg:mainAlg2}, we evaluate the resolvent mappings with respect to $M_k$ at $\bar z_k$ and $z_k$ respectively. Here, $M_k\in\mathcal{S}_{++}(\mathcal{H})$ is generated from quasi-Newton framework (see Section~\ref{section:0sr1}) and represents a low rank perturbed metric with $M_k=M_0+\mathrm{s}Q_k$ where $Q_k$ has a finite rank $r$. In the quasi-Newton framework, there exists a connection between $Q_k$ and $U_k$ in the form of $Q_k=U_kU_k^*$ for some \textcolor{black}{bounded linear isomorphism} $\map{U_k}{\R^r}{\mathrm{im}(Q_k)}$.
The computation of both formulations involves the same abstract type of calculation at some point $z$, as represented by:
\begin{equation}\label{abstract_resolvent}
    J^{V}_A(z-V^{-1}Bz)\,,
\end{equation} where $V=M+\mathrm{s}Q\in\mathcal{S}_{++}(\mathcal{H})$ with $\mathrm{s}\in\set{-1,1}$, $M\in\mathcal{S}_{++}(\mathcal{H})$ and $Q=UU^*$  for some \textcolor{black}{bounded linear isomorphism} $\map{U}{\R^r}{\mathrm{im}(Q)}$. Here, we replace $M_k$ by $V$, omit the subscript $k$ and for Algorithm~\ref{Alg:mainAlg1} and~\ref{Alg:mainAlg2} we substitute $\bar z_k$ with $z$ and $z_k$ with $z$ respectively.} \textcolor{black}{While the computation of the resolvent~\eqref{abstract_resolvent} can be significantly simplified using Theorem~\ref{thm:resolvent}, here, an efficient update formula incorporating the forward step is presented.} Instead of evaluating $J^{V}_A(z-V^{-1}Bz)$ directly, we decompose the entire task into two subproblems: a root-finding problem and an evaluation of resolvent with respect to $M$ which is assumed to be simple to compute.
\begin{prop}\label{prop:PDHG} Consider the setting of \textcolor{black}{problem}~\eqref{vi}. Let $V=M+\mathrm{s}Q\in\mathcal{S}_{++}(\mathcal{H})$  with $\mathrm{s}\in\set{-1,1}$, $M\in\mathcal{S}_{++}(\mathcal{H})$ and $Q\in\mathcal{S}_0(\mathcal{H})$ having $Q=UU^*$ for \textcolor{black}{a bounded linear isomorphism} $\map{U}{\R^r}{\mathrm{im}(Q)}$. 
Then \eqref{abstract_resolvent} can be equivalently expressed by 
\begin{equation} \label{resolventJM}
    \hat z=J^{V}_A(z-V^{-1} B z) = J_A^M(z-V^{-1} B z -\mathrm{s}  M^{-1}U\alpha^*)\,.
\end{equation}
Here, $\alpha^*\in\mathbb{R}^{r}$ is the unique root of $\map{\mathcal{L}}{\mathbb{R}^r}{\mathbb{R}^r}$,
\begin{equation}
    \mathcal{L}(\alpha^*)\coloneqq U^*(z-V^{-1} B z-J_A^M(z-V^{-1} B z -\mathrm{s} M^{-1}U\alpha^*)) +\alpha^* = 0\,.
\end{equation}
The function $\mathcal{L}$ is Lipschitz continuous with constant $1+\norm{M^{-1/2}U}^2$ and strictly monotone.
\begin{proof}
    Apply Theorem~\ref{thm:resolvent} at the shifted point $z-V^{-1}Bz$.
\end{proof}
\end{prop}
\textcolor{black}{It remains now to compute the point $z-V^{-1} B z$ which involves inverting $V$. Since $V$ has the special structure $M + \mathrm{s}Q$, we will show that the update amounts to inverting solely $M$. This is equivalent to applying the Sherman-Morrison formula.}
\begin{prop}\label{prop:PDHGnew}
Consider the setting of Proposition~\ref{prop:PDHG}. Then \eqref{abstract_resolvent} at a point $z$ can be solved by 
\begin{equation} \label{resolventJMnew}
    \hat z=J^V_A(z-V^{-1} B z)=J_A^M(z - M^{-1}Bz-\mathrm{s}  M^{-1}U\xi^*).
\end{equation}
Here, $\xi^*\in \mathbb{R}^{r}$ is the unique zero of $ \map{\mathcal{J}}{\mathbb{R}^r}{\mathbb{R}^r}$,
\begin{equation}\label{eq:root-FB}
    \mathcal J(\xi^*)\coloneqq \textcolor{black}{U^*}(z-J_A^M(z - M^{-1}Bz -\mathrm{s} M^{-1}U\xi^*)) +\xi^* = 0\,. 
\end{equation}
The function $\mathcal{J}$ is Lipschitz continuous with constant $1+\norm{M^{-1/2}U}^2$ and strictly monotone.
\end{prop}
\begin{proof}
    See Appendix~\ref{Proof:PDHGnew}.
\end{proof}
\section{A General 0SR1 Quasi-Newton Metric}\label{section:0sr1}
We note that if we set in the inclusion problem (\ref{vi}) $A\equiv 0$, and $B=\nabla f$ of some convex smooth function $f$, the update step consisting of (\ref{hatz}) and (\ref{inertialstep}) reduces to Gradient Descent when $M_k\equiv \opid$ and to the classic Newton method when $M_k = \nabla ^2 f(z_k)$. Motivated by classic Newton and quasi-Newton methods, we construct $M_k$ as an approximation of the differential of $Bz_k$ at $z_k$. We generalize the quasi-Newton method 0SR1 from differentiable $\nabla f$ (SR1 method with 0-memory) to a co-coercive operator $B$. The approximation $M_k$ shall satisfy the modified secant condition:
\begin{equation}
    M_ks_k = y_k,\quad \mathrm{where}\quad y_k=Bz_k -Bz_{k-1},\quad s_k= z_k-z_{k-1}\,. 
\end{equation}
Choose $M_0\in\mathcal{S}_{++}(\mathcal H)$ positive-definite. \textcolor{black}{If $\scal{y_k - M_0 s_k}{s_k}= 0$, we skip the update of the metric.} \textcolor{black}{If $\scal{y_k - M_0 s_k}{s_k}\neq 0$}, the update is:
\begin{equation}\label{Alg:0SR1metric}
    M_k = M_0+\mathrm{s}\textcolor{black}{ U_kU_k^*},\quad \textcolor{black}{\hat{u}_k} = (y_k-M_0s_k)/\sqrt{|\scal{y_k - M_0 s_k}{s_k}|},
\end{equation}
where \textcolor{black}{$\map{U_k}{\R}{\mathcal{H}}$, $\alpha\mapsto U_k\alpha \coloneqq\alpha \sqrt{\gamma_k} \textcolor{black}{\hat{u}_k}$ is a bounded linear mapping}, $\gamma_k\in[0,+\infty)$ needs to be selected such that $M_k$ is positive-definite and \textcolor{black}{$\mathrm{s}\in\set{-1,+1}$} depends on the sign of $\scal{y_k - M_0 s_k}{s_k}$. If $\scal{y_k - M_0 s_k}{s_k}>0$, we use
\begin{equation}
    M_k = M_0+  \textcolor{black}{U_kU_k^*},
\end{equation}
and, if $\scal{y_k - M_0 s_k}{s_k}<0$, we use
\begin{equation}
    M_k = M_0- \textcolor{black}{U_kU_k^*}.
\end{equation}

\textcolor{black}{
In the rest of this section, we provide conditions under which $M_k$ defined in \eqref{Alg:0SR1metric} meets Assumption~\ref{assumption1} and Assumption~\ref{Assumption:2}. \textcolor{black}{We introduce a bounded linear mapping $\map{u_k}{\R}{\mathcal{H}}$, $\alpha\mapsto \alpha \textcolor{black}{\hat{u}_k}$ where $\textcolor{black}{\hat{u}_k}\in\mathcal{H}$ is  defined by \eqref{Alg:0SR1metric}. For convenience, we will also equate $U_kU_k^*$ with $\gamma_k u_ku_k^*$.} We start with Assumption~\ref{Assumption:2}.
\begin{lemma}\label{lemma:PDHGcond1}
Let $M_0$ be symmetric positive-definite and $B$ is $\beta$-co-coercive. Suppose that $M_0-\frac{1}{\beta}\in\mathcal{S}_c(\mathcal{H})$ for $c > 0$. 
\begin{enumerate}[(i)]
    \item If $M_k = M_0 + \gamma_k u_ku_k^*$, where $0 < \inf_{k \in \N} \gamma_k \leq \sup_{k \in \N} \gamma_k < +\infty$, then for all $k\in\mathbb{N}$, $M_k-\frac{1}{\beta}\in\mathcal{S}_c(\mathcal{H})$ and $M_k$ verifies Assumption~\ref{Assumption:2}.
    \item If $M_k = M_0 - \gamma_k u_ku_k^*$, where $0 < \inf_{k \in \N} \gamma_k \leq \sup_{k \in \N} \gamma_k < \frac{c^2}{(1/\beta + \norm{M_0})^2}$, then $M_k$ is positive-definite and satisfies Assumption~\ref{Assumption:2}.
\end{enumerate}
\end{lemma}
\begin{proof}
See Appendix~\ref{Proof:PDHGcond1}.
\end{proof}
}
\textcolor{black}{
\begin{lemma}\label{Lemma:PDHGcond2}
Let $M_0$ be symmetric positive-definite and $B$ is $\beta$-co-coercive. Suppose that $M_0-\frac{1}{\beta}\in\mathcal{S}_c(\mathcal{H})$ for $c > 0$. Let $(\eta_k)_{k \in \N}\in \ell_+^1(\mathbb{N})$.
\begin{enumerate}[(i)]
    \item Case $M_k = M_0 + \gamma_k u_ku_k^*$: take $\gamma_k = \frac{\eta_k}{\norm{u_k}^2}(1/\beta+c)$, and assume that $(\eta_k)_{k \in \N}$ is non-increasing. Then for all $k\in\mathbb{N}$, $M_k-\frac{1}{\beta}\in\mathcal{S}_c(\mathcal{H})$ and $M_k$ verifies Assumption~\ref{assumption1}.
    \item Case $M_k = M_0 - \gamma_k u_ku_k^*$: take $\gamma_k = \frac{\kappa\eta_k}{\norm{u_k}^2}(1/\beta+c)$, where $\kappa \in ]1/(1+\beta c),1[$, and assume that $\sup_{k \in \N} \eta_k \leq 1/\kappa-1$. Then $M_k$ is positive-definite and satisfies Assumption~\ref{assumption1}.
\end{enumerate}
\end{lemma}
\begin{proof}
See Appendix~\ref{Proof:PDHGcond2}.
\end{proof}
}
\textcolor{black}{
\begin{remark}
    The proof of Lemma~\ref{Lemma:PDHGcond2} does not rely on co-coercivity of $B$ unlike that of Lemma~\ref{lemma:PDHGcond1}. If one uses that property, and more precisely the bounds in \eqref{eq:cocoercivebnd}, then the choice of $\gamma_k$ can be made independent of $u_k$ in the form $\gamma_k = C\eta_k$, where $C$ is a positive constant that depends only on $(\beta,c,\norm{M_0})$.
\end{remark}
}


\section{Inertial Quasi-Newton PDHG for Saddle-point Problems}\label{section:IQNPDHG}

In this section, we consider a min-max problem as follows:
\begin{equation}\label{minmax}
    \min_{x\in \mathcal{H}_1 }\max_{y\in \mathcal{H}_2} g(x) + G(x) + \scal{Kx}{y} - f(y) - F(y)
\end{equation}
with a linear mapping $\map{K}{\mathcal{H}_1}{\mathcal{H}_2}$ and proper lower semi-continuous convex functions $\map{g}{\mathcal{H}_1}{\overline{\R}\coloneqq \R\cup\{
+\infty\}}$ and $\map{f}{\mathcal{H}_2}{\overline{\R}}$. \textcolor{black}{Additionally, we consider continuously differentiable convex functions $\map{G}{\mathcal{H}_1}{\R}$ and $\map{F}{\mathcal{H}_2}{\R}$ with Lipschitz continuous gradients.} This problem can be expressed as a special monotone inclusion problem from which we derive in Algorithm~\ref{Alg:IQN-PDHG} an inertial quasi-Newton Primal-Dual Hybrid Gradient Method (PDHG) and in Algorithm~\ref{Alg:RQN-PDHG} a quasi-Newton PDHG with relaxation step as special applications of Algorithm~\ref{Alg:mainAlg1} and Algorithm~\ref{Alg:mainAlg2} respectively (see Proposition~\ref{Prop:ppp}). By Fermat's rule, the optimality condition for (\ref{minmax}) is the following inclusion problem:
\begin{equation}\label{op:T}
    0\in Az + Bz\quad \mathrm{with}\,z=\begin{pmatrix}x\\y\end{pmatrix},\,\mathrm{where}\,Az = \begin{pmatrix}\partial g(x)+ K^*y \\ -Kx + \partial f(y)\end{pmatrix} \mathrm{and}\, Bz = \begin{pmatrix} \nabla G(x)\\\nabla F(y)\end{pmatrix}\,.
\end{equation}

\begin{algorithm}[h]
\caption{Inertial quasi-Newton PDHG}\label{Alg:IQN-PDHG}
\begin{algorithmic}

\Require $N \geq 0$, $(\norm[]{\epsilon_k})_{k\in\mathbb{N}}\in\ell_+^1(\mathbb{N})$ 
\State \textbf{Update for $k=0,\ldots,N$}:
\begin{enumerate}
    \item[(i)]Compute $M_k$ according to a quasi-Newton framework.
    \item[(ii)] Compute the inertial step with parameter $\alpha_k$:
\begin{equation}
    \begin{split}
        \bar x_k & = x_k + \alpha_k (x_k - x_{k-1})\,,\\
        \bar y_k & = y_k + \alpha_k (y_k - y_{k-1})\,.\\
    \end{split}    
\end{equation}
\item[(iii)] Compute the main quasi-Newton PDHG step:
\begin{equation}\label{hat-zPDHG}
    \begin{split}
        x_{k+1} &=\mathrm{prox}^{\Tau}_{g}(\bar x_k-\Tau\nabla G(\bar x_k)-\Tau K^*\bar y_k-\mathrm{s}\Tau U_{k,x}\xi_k)+\epsilon_{k,x}\,,\\
        y_{k+1} &=\mathrm{prox}^{\Sigma}_{f}(\bar y_k -\Sigma\nabla F(\bar y_k) +\Sigma K(2 x_{k+1} -\bar x_k)-\mathrm{s} \Sigma U_{k,y} \xi_k)+\epsilon_{k,y},\\
    \end{split}
\end{equation}
where $\xi_k$ solves $\mathcal J(\xi_k) = 0 $ (see \eqref{eq:root-PDHG}) and $\epsilon_k=\begin{pmatrix}
    \epsilon_{k,x}\\\epsilon_{k,y}
\end{pmatrix}$ is the error caused by computation at the $k$-th iterate.
\end{enumerate}

    

\State \textbf{End}
\end{algorithmic}
\end{algorithm}

\begin{algorithm}[h]
\caption{Quasi-Newton PDHG with relaxation}\label{Alg:RQN-PDHG}
\begin{algorithmic}

\Require $N \geq 0$, $(\norm[]{\epsilon_k})_{k\in\mathbb{N}}\in\ell_+^1(\mathbb{N})$
\State \textbf{Update for $k=0,\ldots,N$}:
\begin{enumerate}
    \item[(i)] Compute $M_k$ according to quasi-Newton framework.
    \item[(ii)]  Compute the main quasi-Newton PDHG step:
\begin{equation}\label{relax-hat-zPDHG}
    \begin{split}
        \tilde x_{k} &=\mathrm{prox}^{\Tau}_{g}(x_k-\Tau\nabla G( x_k)-\Tau K^* y_k-\mathrm{s}\Tau U_{k,x}\xi_k)+\epsilon_{k,x}\,,\\
        \tilde y_{k} &=\mathrm{prox}^{\Sigma}_{f}( y_k -\Sigma\nabla F( y_k) +\Sigma K(2 \tilde x_{k} -x_k)-\mathrm{s} \Sigma U_{k,y} \xi_k)+\epsilon_{k,y}\,,\\
    \end{split}
\end{equation}
where $\xi_k$ solves $\mathcal J(\xi_k)= 0$ and $\epsilon_k=\begin{pmatrix}
    \epsilon_{k,x}\\\epsilon_{k,y}
\end{pmatrix}$ is the error caused by computation at the $k$-th iterate.
    \item[(iii)] Relaxation step:
\begin{equation}
    v_k\coloneqq M_k\bigg(\begin{pmatrix}x_{k}\\y_{k}\end{pmatrix}-\begin{pmatrix}\tilde x_{k}\\\tilde y_{k}\end{pmatrix}\bigg)+\begin{pmatrix}\nabla G(\tilde x_{k})\\\nabla F(\tilde y_{k})\end{pmatrix}-\begin{pmatrix}\nabla G(x_{k})\\\nabla F(y_{k})\end{pmatrix}
\end{equation}
to compute the relaxation parameter $t_k$
\begin{equation}\label{relxPDHG}
    t_k = \frac{\scal{\begin{pmatrix}x_{k}\\y_{k}\end{pmatrix}-\begin{pmatrix}\tilde x_{k}\\\tilde y_{k}\end{pmatrix}}{v_k}}{2\norm[]{v_k}^2}
\end{equation}
and update $x_k$ and $y_k$ as follows
\begin{equation}
    \begin{pmatrix}x_{k+1}\\y_{k+1}\end{pmatrix} \gets \begin{pmatrix}x_{k}\\y_{k}\end{pmatrix}-t_k v_k\,.
\end{equation}
\end{enumerate}
\State \textbf{End}
\end{algorithmic}
\end{algorithm}
\begin{prop}[PDHG method as a special proximal point algorithm \cite{He}]\label{Prop:ppp}
The update step of PDHG can be regarded as a proximal point algorithm with special metric $M$:
\begin{equation}\label{update}
    M(z_{k+1} -z_k) + Az_{k+1}\ni - Bz_k\,,
\end{equation}
\textcolor{black}{where $M = \begin{pmatrix} \Tau^{-1} & -K^*\\-K & \Sigma^{-1} \end{pmatrix}$ with two fixed operators $\Tau\in\mathcal{S}_{++}(\mathcal{H}_1)$ and $\Sigma\in\mathcal{S}_{++}(\mathcal{H}_2)$ \textcolor{black}{such that $M \succ 0$. The latter is verified when $\norm{\Sigma^{1/2}K\Tau^{1/2}}< 1$}.}
\end{prop}
\begin{remark}\label{rem:metricpdhg}
\textcolor{black}{It is straightforward to verify that 
\begin{equation*}
M \succeq (1-\norm{\Sigma^{1/2}K\Tau^{1/2}})\begin{pmatrix} \Tau^{-1} & 0\\0 & \Sigma^{-1} 
\end{pmatrix}
\end{equation*}
If $\Tau =\tau \opid$ and $\Sigma =\sigma\opid$, we can retrieve a PDHG algorithm with constant stepsizes $\tau > 0$ and $\sigma > 0$. Moreover, $M \succeq (1-\tau^{1/2}\sigma^{1/2}\norm{K})\min(\tau^{1/2},\sigma^{1/2})$, and a sufficient condition for $M \succ 0$ is that $\tau\sigma\norm{K}^2 < 1$.}  
\end{remark}
Therefore, we are able to apply Algorithm~\ref{Alg:mainAlg1} and Algorithm~\ref{Alg:mainAlg2} directly to a saddle point problem by regarding $T$ as $A$ in the inclusion problem (\ref{vi}), which will give us Algorithm~\ref{Alg:IQN-PDHG} and Algorithm~\ref{Alg:RQN-PDHG}, respectively. 
We are going to use a variable metric $M_k = M+\mathrm{s} Q_k \in \mathcal{S}(\mathcal{H}_1\times \mathcal{H}_2)$ \textcolor{black}{with $\mathrm{s}\in\set{1,-1}$} instead of $M\in \mathcal{S}(\mathcal{H}_1\times \mathcal{H}_2)$ in (\ref{update}) for the $k$-th iterate. Here, we set $Q_k = U_kU_k^*$ where $\map{U_k}{\R^r}{\mathrm{im}(Q)},\ \alpha\mapsto U_k(\alpha)\coloneqq\sum_{i=1}^r\alpha_i u_{k,i}$ is an isomorphism defined by $r$ linearly independent $u_{k,1},\ldots,u_{k,r}\in\mathcal{H}_1\times \mathcal{H}_2$ for each $k$-th iterate. We obtain the update step: Find $z_{k+1}\in\mathcal{H}_1\times\mathcal{H}_2$ such that
\begin{equation}\label{VarMetricUp}
    M_k(z_{k+1} -z_k) + Az_{k+1}\ni - Bz_k\,.
\end{equation}
In practice, Proposition~\ref{prop:PDHGnew} which is derived from our main result Theorem~\ref{thm:resolvent} turns out to be more tractable \textcolor{black}{than other ways to evaluate variable metric proximal mapping, e.g. using coordinate descent to solve a subproblem which has the same dimension as the original problem \cite{scheinberg2016practical}.} To show how to calculate the update step, using Proposition~\ref{prop:PDHGnew} in quasi-Newton PDHG in Algorithm~\ref{Alg:IQN-PDHG} and in Algorithm~\ref{Alg:RQN-PDHG}, we introduce Proposition~\ref{prop:PDHGnew2}. Let $U_k\in \mathcal{B}(\R^r,\mathcal{H}_1\times\mathcal{H}_2)$. We set $U_k=\begin{pmatrix}U_{k,x}\\U_{k,y}\end{pmatrix}$ with $U_{k,x}\in\mathcal{B}(\R^r,\mathcal{H}_1)$ and $U_{k,y}\in \mathcal{B}(\R^r,\mathcal{H}_2)$. 
The validity of the update step is verified in Proposition~\ref{prop:PDHGnew2}.
\begin{prop}\label{prop:PDHGnew2}
The update step from $\bar z_k$ ($z_k$) to $z_{k+1}$ ($\tilde z_k$) in Algorithm~\ref{Alg:IQN-PDHG} (Algorithm~\ref{Alg:RQN-PDHG}), \textcolor{black}{which is the quasi-Newton PDHG update step}, reduces to compute
\begin{equation} \label{resolventJMnew2}
    \begin{cases}
    x_{k+1} &=\mathrm{prox}^{\Tau}_{g}(\bar x_k-\Tau\nabla G(\bar x_k)-\Tau K^*\bar y_k-\mathrm{s}\Tau U_{k,x}\xi_k)\\
    y_{k+1} &=\mathrm{prox}^{\Sigma}_{f}(\bar y_k -\Sigma\nabla F(\bar y_k) +\Sigma K(2 x_{k+1} -\bar x_k)-\mathrm{s} \Sigma U_{k,y}\xi_k)
    \,.\end{cases}
\end{equation}
where, $\xi_k\in \mathbb{R}^{r}$ is the unique zero of $ \map{\mathcal{J}}{\mathbb{R}^r}{\mathbb{R}^r}$:
\begin{equation}\label{eq:root-PDHG}
\begin{split}
      \mathcal{J}(\xi)&=(U_{k,x})^*[\bar x_k-\underbrace{\mathrm{prox}_g^\Tau(\bar x_k-\Tau\nabla G(\bar x_k)-\Tau K^*\bar y_k-\mathrm{s} \Tau U_{k,x}\xi)}_{x_{k+1}(\xi)}]\\&+(U_{k,y})^*[\bar y_k-\mathrm{prox}_{f}^\Sigma(\bar y_k-\Sigma \nabla F(\bar y_k)+\Sigma K(2x_{k+1}(\xi)-\bar x_k)-\mathrm{s} \Sigma U_{k,y}\xi)]+\xi\,.\\
\end{split}
\end{equation}
\end{prop}
\textcolor{black}{
\begin{proof}
 It is a direct consequence of Proposition~\ref{prop:PDHGnew} and Proposition~\ref{Prop:ppp}.
\end{proof}
}
\begin{remark}
    By switching $\bar z_k$ with $z_k$ and $z_{k+1}$ with $\tilde z_k$, we obtain the update step for Algorithm~\ref{Alg:RQN-PDHG}. 
\end{remark}
\textcolor{black}{
\begin{remark}
In special cases, the root-finding problem can be solved exactly. For instance, when $\mathcal{T}=\tau \opid$, $\Sigma =\sigma\opid$, $K=\opid$, $g\equiv 0$ and $f(y) = \norm[1]{y}$, according to \cite{becker2019quasi}, the root-finding problem with $r=1$ is piece-wisely linear and can be solved exactly.
\end{remark}
}
We would like to emphasize that using Proposition~\ref{prop:PDHGnew2}, we can avoid the computation of \textcolor{black}{$M_k^{-1}$} in the primal and dual setting, which is a computationally significant advantage. The convergence of Algorithms~\ref{Alg:IQN-PDHG} and~\ref{Alg:RQN-PDHG} is a direct consequence of Theorems~\ref{thm:1} and~\ref{thm:relax}.
\begin{prop}[Convergence of quasi-Newton PDHG method]
    Let $M_0=M$ as defined in (\ref{update}) and $M_k=M_0+\mathrm{s} U_kU_k^*$.
    \begin{enumerate}
        \item[(i)] If $(M_k)_{k\in\mathbb{N}}$ satisfies Assumption~\ref{assumption1} (see e.g. Lemma~\ref{Lemma:PDHGcond2}), then $(x_k,y_k)$ generated by Algorithm~\ref{Alg:IQN-PDHG} converges weakly to some solution of (\ref{SaddlePointProblem}). Furthermore, if $g$ and $f$ are both strongly convex (or $G$ and $F$ are both strongly convex), then we obtain the same convergence rate as in Theorem~\ref{thm:1}.
        \item[(ii)] If $(M_k)_{k\in\mathbb{N}}$ satisfies Assumption~\ref{Assumption:2} (see e.g. Lemma~\ref{lemma:PDHGcond1}), then $(x_k,y_k)$ generated by Algorithm~\ref{Alg:RQN-PDHG} converges weakly to some solution of (\ref{SaddlePointProblem}). Furthermore, if $g$ and $f$ are both strongly convex (or $G$ and $F$ are both strongly convex) , then we obtain linear convergence.
    \end{enumerate}
\end{prop}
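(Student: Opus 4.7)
The plan is to identify Algorithms \ref{Alg:IQN-PDHG} and \ref{Alg:RQN-PDHG} as concrete instances of the abstract inertial FBS scheme (Algorithm \ref{Alg:mainAlg1}) and the relaxed FBS scheme (Algorithm \ref{Alg:mainAlg2}) applied to the monotone inclusion $0\in T(z)+B(z)$ from (\ref{op:T}), and then transfer the statements of Theorems \ref{thm:1} and \ref{thm:relax} verbatim. The argument factors naturally into three conceptual steps: verifying the operator hypotheses, identifying the two algorithms with their abstract counterparts via Propositions \ref{Prop:ppp} and \ref{prop:PDHGnew2}, and translating strong convexity on $(g,f)$ or $(G,F)$ into strong monotonicity of $T$ or $B$.

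First, I would check that $T$ and $B$ satisfy the standing assumptions of Section 3.1. The operator $T$ decomposes as the diagonal subdifferential $\partial g\oplus\partial f$ plus the bounded, everywhere-defined, skew-symmetric linear operator $S=\begin{pmatrix}0&K^*\\-K&0\end{pmatrix}$; hence $T$ is the sum of two maximally monotone operators, one with full domain, so $T$ is maximally monotone. The operator $B=\nabla G\oplus\nabla F$ is single-valued and, by the Baillon--Haddad theorem applied coordinatewise, $\beta$-cocoercive with $\beta=1/\max(L_G,L_F)$. By Fermat's rule, the zeros of $T+B$ are exactly the saddle-points of (\ref{SaddlePointProblem}). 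Second, Proposition \ref{Prop:ppp} expresses the classical PDHG step as a forward-backward step in the fixed metric $M$; replacing $M$ by $M_k=M\pm U_kU_k^*$ and unfolding the resulting resolvent $J_T^{M_k}(\bar z_k-M_k^{-1}B(\bar z_k))$ via Proposition \ref{prop:PDHGnew2} reproduces verbatim the explicit primal-dual update (\ref{hat-zPDHG}) of Algorithm \ref{Alg:IQN-PDHG}, and the analogous unfolding reproduces (\ref{relax-hat-zPDHG}) together with the relaxation rule (\ref{relxPDHG}) of Algorithm \ref{Alg:RQN-PDHG}. Thus each iterate of Algorithm \ref{Alg:IQN-PDHG} (resp.\ \ref{Alg:RQN-PDHG}) is literally an iterate of Algorithm \ref{Alg:mainAlg1} (resp.\ \ref{Alg:mainAlg2}) applied to $T+B$ with metric $M_k$.

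Third, invoking Theorem \ref{thm:1} under Assumption \ref{assumption1} and Theorem \ref{thm:relax} under Assumption \ref{Assumption:2} immediately yields weak convergence of $z_k=(x_k,y_k)$ to a zero of $T+B$, and hence to a saddle-point of (\ref{SaddlePointProblem}). For the quantitative statements, if $g$ and $f$ are $\gamma_g$- and $\gamma_f$-strongly convex, then $\partial g\oplus\partial f$ is $\min(\gamma_g,\gamma_f)$-strongly monotone; since $\scal{Sz}{z}=0$ for every $z$, the skew part contributes nothing and $T$ inherits the same strong monotonicity modulus, so $\gamma_A>0$ with $A=T$. Symmetrically, strong convexity of $G$ and $F$ turns $B$ into a $\min(\gamma_G,\gamma_F)$-strongly monotone operator, so $\gamma_B>0$. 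In either case the hypothesis $M_k-\tfrac{1}{\beta}\opid\in\mathcal{S}_\epsilon(\mathcal{H})$, which is already part of Assumption \ref{assumption1}/\ref{Assumption:2} for the linear-rate regime, is in force and the linear convergence rates furnished by Theorem \ref{thm:1} and Theorem \ref{thm:relax} apply unchanged. The single delicate point is the skew-symmetry cancellation that makes the bilinear coupling $\scal{Kx}{y}$ inert for strong monotonicity of $T$; every other step is bookkeeping that matches the PDHG template to the abstract framework via Proposition \ref{prop:PDHGnew2}.
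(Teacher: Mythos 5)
Your proposal is correct and matches the paper's (unwritten) argument, since the paper simply declares this proposition a direct consequence of Theorems \ref{thm:1} and \ref{thm:relax} without elaborating. You have supplied the missing bookkeeping accurately: maximal monotonicity of $T$ via the bounded everywhere-defined skew part, $\beta$-cocoercivity of $B$ via Baillon--Haddad, the identification with the abstract forward-backward iterate through Propositions \ref{Prop:ppp} and \ref{prop:PDHGnew2}, and in particular the skew-symmetry cancellation that makes strong convexity of both $g$ and $f$ yield $\gamma_A=\min(\gamma_g,\gamma_f)>0$ for $A=T$ (symmetrically for $B$), which is exactly what the quantitative hypotheses of the two theorems require.
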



\section{Numerical experiments}\label{Experiments}
The algorithms that we analyze in the experiments are summarized in Table~\ref{tab:my-table}.
\begin{table}[H]
\centering
\resizebox{\columnwidth}{!}{%
\begin{tabular}{llll}
\hline
\textbf{Algorithm}         & \textbf{Algorithm Name}                                                                                   & \textbf{Metric}                                                                                                          \\ \hline
 \textcolor{black}{FBS}                       & Foward-Backward Primal-Dual Hybrid Gradient Method                                                                 &  $M$ fixed as in (\ref{update})                                                                                                  \\ \hline
 \textcolor{black}{IFBS}              & Inertial Primal-Dual Hybrid Gradient Method                                                                 & $M$ fixed as in (\ref{update})                              \\ \hline
 \hyperref[Alg:IQN-PDHG]{QN-FBS}          & \begin{tabular}[c]{@{}l@{}}quasi-Newton \\ Primal-Dual Hybrid Gradient Method Gradient\end{tabular}                   & Variable metric as in (\ref{Alg:0SR1metric}) with $M_0=M$ from \eqref{update}                                                     \\ \hline
\hyperref[Alg:RQN-PDHG]{RQN-FBS}       & \begin{tabular}[c]{@{}l@{}}Primal-Dual Hybrid Gradient Method \\ with relaxation step\end{tabular} & Variable metric as in (\ref{Alg:0SR1metric}) with $M_0=M$ from \eqref{update}                                                                      \\ \hline
\hyperref[Alg:IQN-PDHG]{IQN-FBS} & \begin{tabular}[c]{@{}l@{}}Inertial quasi-Newton \\ Primal-Dual Hybrid Gradient Method\end{tabular}          &   Variable metric as in (\ref{Alg:0SR1metric}) with $M_0=M$ from \eqref{update}                                    \\ \hline

\end{tabular}%
}
\caption{Summary of algorithms used in the numerical experiments. Details are provided within each section. }
\label{tab:my-table}
\end{table}

Note that PDHG is used interchangeably as FBS in the later experiments since PDHG is a specialization of FBS.
\subsection{TV-\texorpdfstring{$l_2$}{TEXT} deconvolution}
\label{Experiments:image}
In this experiment, we solve a problem that is used for image deconvolution \cite{bredies2018mathematical}. Given a blurry and noisy image $b\in\R^{MN}$ (interpreted as a vector by stacking the $M$ columns of length $N$), we seek to find a clean image $x\in\R^{MN}$ by solving the following optimization problem:
\begin{equation}\label{PRob:imageDconvolution}
    \min_{0\leq x\leq 255} \frac{1}{2}\norm[2]{\textcolor{black}{L}x-b}^2 + \mu\norm[2,1]{Dx}\,,
\end{equation}
where $\textcolor{black}{L}\in \R^{MN\times MN}$ is a linear operator that acts as a blurring operator and $\norm[2,1]{Dx}$ implements a discrete version of the isotropic total variation norm of $x$ using simple forward differences in horizontal and vertical direction \textcolor{black}{with $\norm[]{D}\leq 2\sqrt{2}$}. The parameter $\mu>0$ stresses the influence of the regularization term $\norm[2,1]{Dx}$ versus the data fidelity term $\tfrac{1}{2}\norm[]{\textcolor{black}{L}x-b}^2$. In order to deal with the non-smoothness, we rewrite the problem as a saddle point problem: 
\begin{equation}
    \min_{x}\max_{y} \scal{Dx}{y} + \delta_{\Delta}(x)+ \frac{1}{2}\norm[2]{\textcolor{black}{L}x - b}^2 - \delta_{\{\norm[2,\infty]{\cdot}\leq\mu\}}(y)\,,
\end{equation}
where $\Delta\coloneqq\{x\in\R^{MN}\vert 0\leq x_i\leq 255,\forall i\in\set{1,\ldots,MN}\}$. We can cast this problem into the general class of problems (\ref{minmax}) by setting $K=D$, $f = \delta_{\{\norm[2,\infty]{\cdot}\leq\lambda\}}$, $G(x)=\frac{1}{2}\norm[2]{\textcolor{black}{L}x-b}^2$, $F(p)=0$ and $g=\delta_{\Delta}$. 
\textcolor{black}{Here, $G$ is $1/\beta$-smooth with $\beta = 2/3$ as we took $\norm{\textcolor{black}{L}}^2 \leq 3/2$. Let $z_k= (x_k,y_k)$ be the primal-dual iterate sequence. Choosing $\Tau =\tau\opid=0.05\opid$ and $\Sigma =\sigma\opid=0.05\opid$, it follows from Proposition~\ref{Prop:ppp} and Remark~\ref{rem:metricpdhg} that $M-\frac{1}{\beta}\opid \succeq (16-3/2)\opid > 0$ as desired}. We compute the low-rank part $Q_k = \gamma_k u_k u_k^\top$ by (\ref{Alg:0SR1metric}) with $Bz_k=\left(\begin{smallmatrix}\textcolor{black}{L}^\top \textcolor{black}{L}x_k-\textcolor{black}{L}^\top b\\0 \end{smallmatrix}\right)$ \textcolor{black}{which is of course $\beta$-co-coercive}. This leads to a metric that affects only the primal update. In each iteration, we use the semi-smooth Newton method \textcolor{black}{(Algorithm~\ref{Alg:Semismooth})} to locate the root.
\begin{figure}[h]

\centering
\includegraphics[width=7cm]{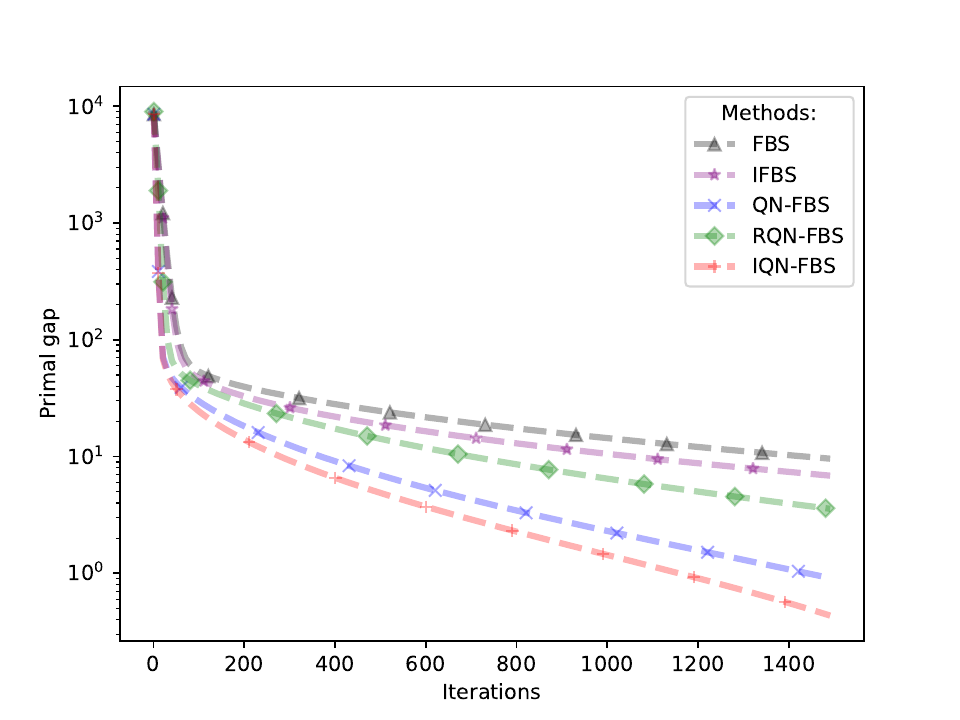}
\includegraphics[width=7cm]{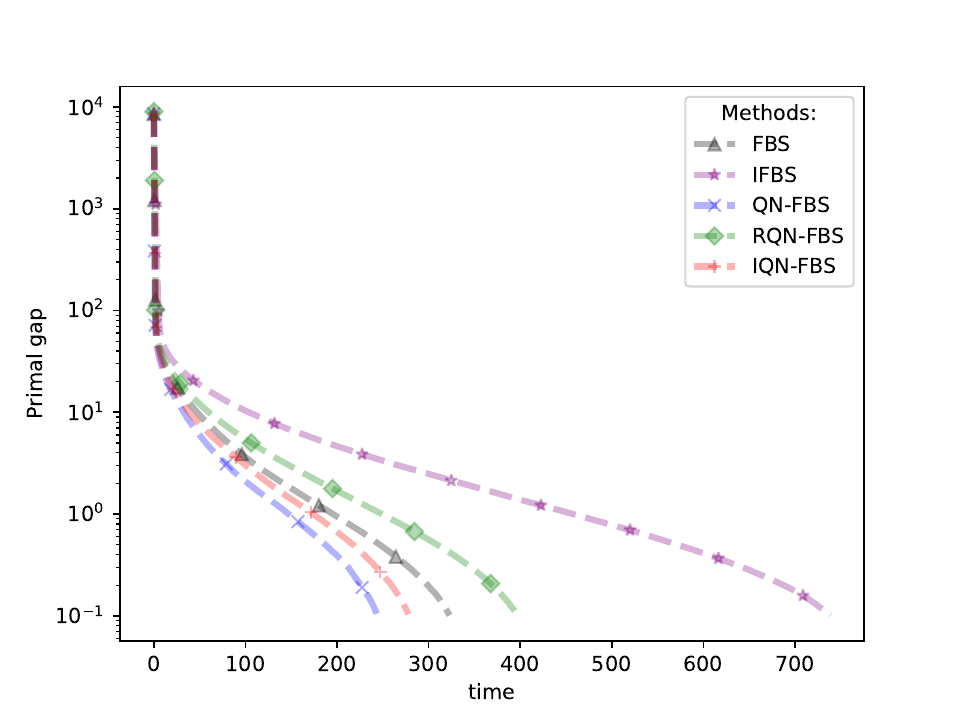}
\caption{We compare convergence of the inertial quasi-Newton PDHG (IQN-FBS) to other algorithms in the Table~\ref{tab:my-table} with $\mu = 0.0001$, $\tau =0.05$, $\sigma =0 .05$ and inertial parameter $\alpha_0 =10$, $\alpha_k = \frac{10}{k^{1.1}(\max\{\norm[]{z_k-z_{k-1}},\norm[]{z_k-z_{k-1}}^2\})}$. \textcolor{black}{The left plot depicts the convergence against the number of iterations, while the right plot shows the convergence with respect to time (seconds)}. We observe that our two quasi-Newton type algorithm QN-, and IQN-FBS clearly outperform the original FBS and IFBS algorithm.}
\label{fig:image-deconvolution}
\end{figure}

Figure~\ref{fig:image-deconvolution} shows the primal gap \textcolor{black}{against the number of iterations and against the time (seconds)}, where the optimal primal value was computed by running the original PDHG method for 10000 iterations. 
For the variable metric at iteration $k$, we fixed $\gamma_k=\min(0.8, 15/\norm[2]{u_k}^2)$. Thus, Assumption~\ref{Assumption:2} is satisfied and the convergence of RQN-FBS (Algorithm~\ref{Alg:mainAlg2}) is guaranteed. 
\textcolor{black}{Although Assumption~\ref{assumption1} can be guaranteed by appropriately defining $(\eta_{k})_{k\in\N}$ in the view of Lemma~\ref{Lemma:PDHGcond2}, we choose not to include the condition that $(1+\eta_k)M_k\succeq M_\kp$ with $(\eta_{k})_{k\in\N}\in\ell_+^1(\N)$ in the numerical experiments. This decision comes from the challenge of setting $(\eta_{k})_{k\in\N}$ in practice and the concern on imposing stringent conditions on $(\eta_{k})_{k\in\N}$ in advance, which may undermine the advantages of utilizing a variable metric.}
In this practical problem, we still observe the convergence of IQN-FBS (Algorithm~\ref{Alg:mainAlg1}). 
We notice that our quasi-Newton type algorithms IQN-FBS, RQN-FBS and QN-FBS are much faster than original FBS algorithm and inertial FBS (IFBS) \textcolor{black}{according to the left plot in Figure~\ref{fig:image-deconvolution}}. This can be explained by the fact that the Hessian of $G(x)$ is not the identity and thus by applying our quasi-Newton SR1 methods, we can adapt the metric to the local geometry of the objective. 
\textcolor{black}{Even though we are concerned about the cost of solving the root-finding problem, the right plot demonstrates the additional iterations can pay off by showing the convergence against the time. Quasi-Newton type algorithms (QN-FBS,IQN-FBS) achieve higher accuracy than FBS and IFBS after running the same time.}
  
\subsection{TV-\texorpdfstring{$l_2$}{TEXT} deconvolution with infimal convolution type regularization}
A source of optimization problems that fits (\ref{minmax}) is derived from the following:
\begin{equation}
    \min_{x\in\mathbb{R}^n} g(x) + G(x) + (f\square h)(Dx),
\end{equation}
where $f \square h(\cdot)\coloneqq \inf_{v\in\mathbb{R}^m}f(v)+h(\cdot-v)$ denotes the infimal convolution of $f$ and $h$. As a prototypical image processing problem, we define a regularization term as infimal convolution between the total variation norm and a weighted squared norm, i.e. $g=0$, $G(x)=\frac{1}{2}\norm[]{\textcolor{black}{L}x-b}^2$, $h(\cdot)= \frac{1}{2}\norm[]{W\cdot}^2$ and $f(\cdot)=\mu\norm[2,1]{\cdot}$. This yields the problem:
\begin{equation}\label{inficonv:Primal}
    \min_{x}\frac{1}{2}\norm[]{\textcolor{black}{L}x-b}^2+\mu R(Dx)
\end{equation}
where $R(\cdot)\coloneqq \inf_{v\in\mathbb{R}^m}\norm[2,1]{v}+\frac{1}{2\mu}\norm[]{W(\cdot-v)}^2$, $W$ is a diagonal matrix of weights which is given to favor discontinuities along image edges and $\textcolor{black}{L}$, $b$, $D$ are defined as in the first experiment. In practice, $W$ can be computed by additional edge finding steps or by extra information. Here, we select $W$ such that $\tfrac{1}{6}\leq \norm[]{W}^2\leq 1$. The optimization problem (\ref{inficonv:Primal}) given in primal type can be converted into the saddle point problem:
\begin{equation}\label{inficonv:saddle}
    \min_{x}\max_{y} \scal{Dx}{y} + \frac{1}{2}\norm[]{\textcolor{black}{L}x-b}^2 - \delta_{\{\norm[2,+\infty]{\cdot}\leq \mu\}}(y) - \frac{1}{2}\norm[]{W^{-1}y}^2\,.
\end{equation}
We compute the low-rank part $Q_k=\gamma_ku_ku_k^\top$ by (\ref{Alg:0SR1metric}) with $Bz_k = \left(\begin{smallmatrix} \textcolor{black}{L}^\top \textcolor{black}{L}x_k-\textcolor{black}{L}^\top b\\(W^{-1})^*W^{-1}y_k\end{smallmatrix}\right)$ which leads to a metric that affects both primal and dual update. \textcolor{black}{Here, $B$ is $\beta$-co-coercive with $\beta \geq 1/6$}. In each iteration, we combine the bisection (Algorithm~\ref{Alg:bisection}) and the semi-smooth Newton method (Algorithm~\ref{Alg:Semismooth}) to locate the root.

Figure~\ref{fig:image-deblurring-infimalconvolution} also shows the primal gap where the optimal primal value was still computed by running original PDHG for 10000 iterates. For the variable metric at iterate $k$, we fixed 
\textcolor{black}{$\gamma_k=0.64$, $\Tau=\tau\opid=0.01\opid$ and $\Sigma=\sigma\opid=0.01\opid$}. \textcolor{black}{Thus, by Lemma~\ref{lemma:PDHGcond1},  Assumption~\ref{Assumption:2} is satisfied.} 
\textcolor{black}{As for Assumption~\ref{assumption1}, we follow the same strategy as in the previous experiment and drop the condition that $(1+\eta_k)M_k\succeq M_\kp$with $(\eta_{k})_{k\in\N}\in\ell_+^1(\N)$ in this numerical experiment.}
We can observe from Figure~\ref{fig:image-deblurring-infimalconvolution}: IQN-FBS is still the fastest one. Moreover, the two quasi-Newton type methods (IQN-FBS and QN-FBS) converge faster than IFBS and FBS. Inertial methods (IQN-FBS, IFBS) are slightly faster respectively than QN-FBS and FBS.
\begin{figure}[H]

\centering
\includegraphics[width=7cm]{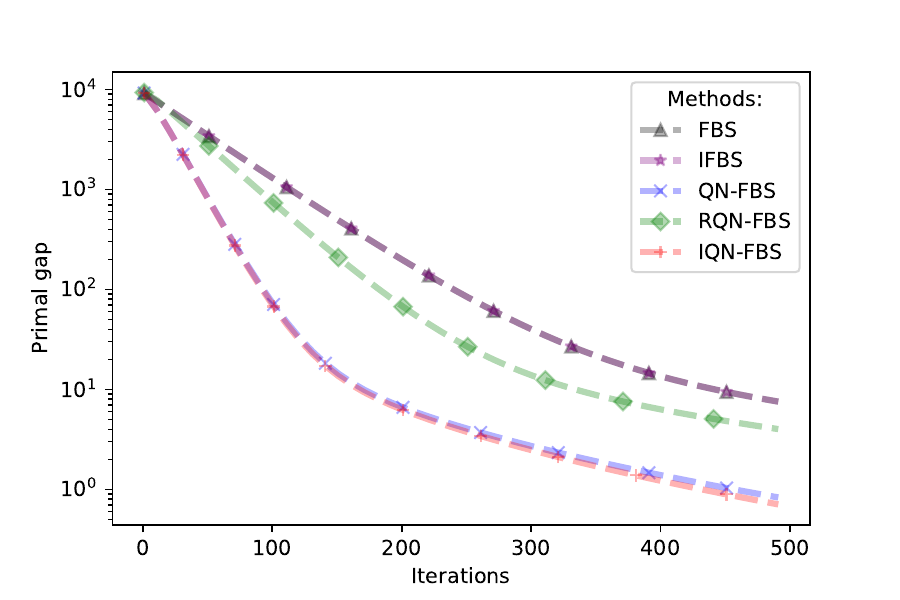}
\includegraphics[width=7cm]{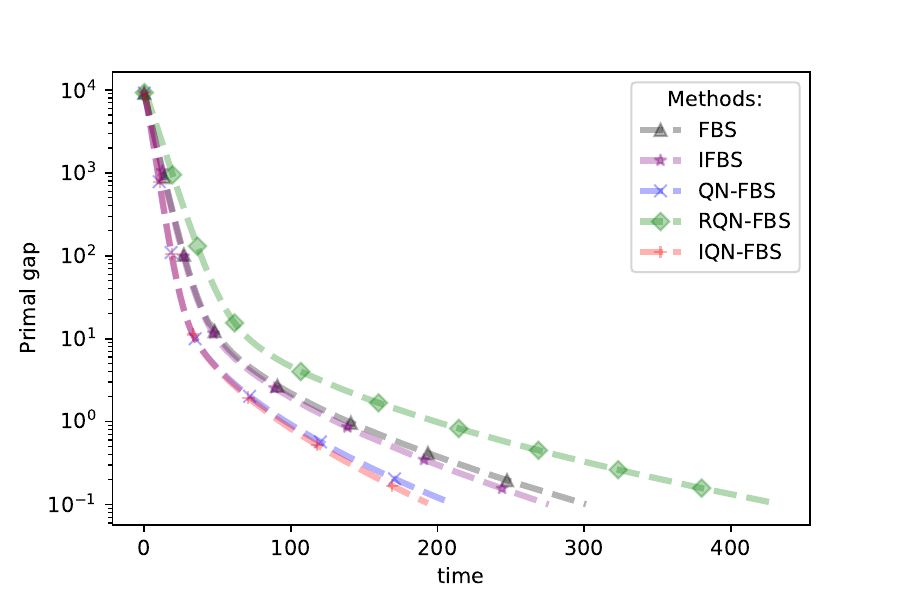}
\caption{We compare convergence of the inertial quasi-Newton PDHG (IQN-FBS) to other algorithms in the Table~\ref{tab:my-table} with $\mu = 0.5$, \textcolor{black}{$\tau =0.01\opid$}, $\sigma =0.01$ and the extrapolation parameter $\alpha_0 =1$, $\alpha_k =\min\{ \frac{10}{k^{1.1}(\max\{\norm[]{z_k-z_{k-1}},\norm[]{z_k-z_{k-1}}^2\})},1\}$. Our quasi-Newton type algorithm IQN-FBS can converge faster than the original FBS and IFBS algorithm.}
\label{fig:image-deblurring-infimalconvolution}
\end{figure}
\subsubsection{Image denoising}
We consider the same setting \eqref{inficonv:saddle}. Let $\textcolor{black}{L}=\opid$ in (\ref{inficonv:saddle}). We then obtain an image denoising problem with special norm defined by infimal convolution of total variation and weighted norm, which has strong convexity for both primal part and dual part. Besides, due to the simple formula, we obtain the dual problem explicitly which means that we can calculate primal and dual gap. The dual problem reads
\begin{equation}
    \max_{\norm[2,\infty]{y}\leq 1} - \frac{1}{2}\norm[]{D^*y-b}^2 - \frac{1}{2}\norm[]{W^{-1}y}^2\,.
\end{equation}
\begin{figure}[H]

\centering
\includegraphics[width=7cm]{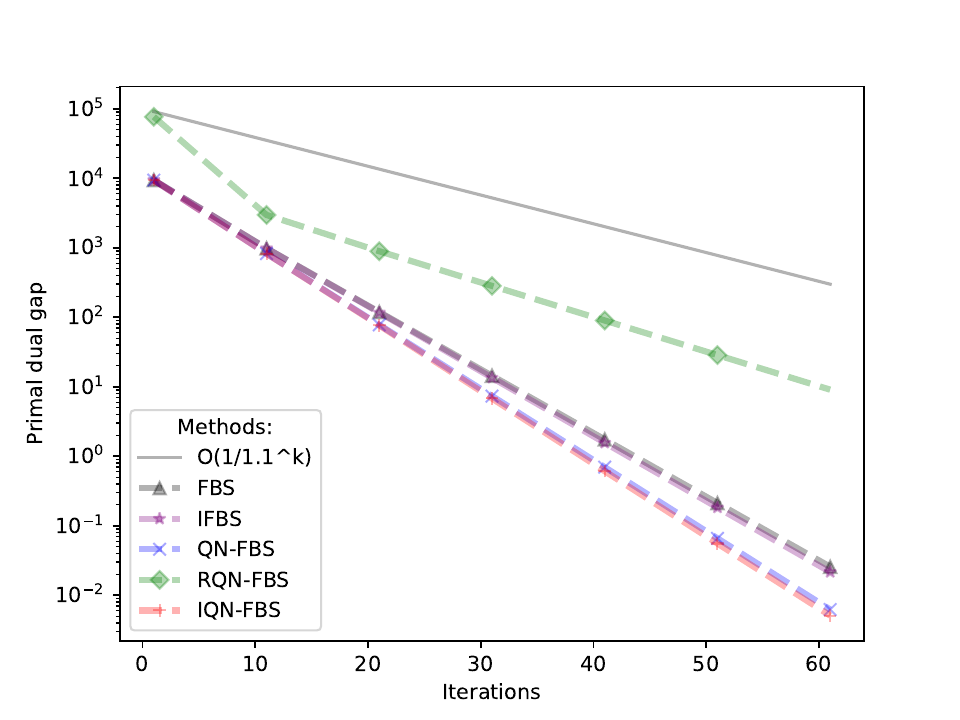}
\caption{We compare convergence of the inertial quasi-Newton PDHG (IQN-FBS) to other algorithms in the Table~\ref{tab:my-table} with $\mu = 0.1$, $\tau =0.1$, $\sigma =0.1$ and extrapolation parameter $\alpha_0 =10$, $\alpha_k = \frac{10}{\max\{k^{1.1},k^{1.1}\norm[]{z_k-z_{k-1}}^{2}\}}$. \textcolor{black}{The plot shows that all algorithms converge linearly and faster than $O(\frac{1}{1.1^k})$}.}
\label{fig:image-denoising-infimalconvolution}
\end{figure}
Figure~\ref{fig:image-denoising-infimalconvolution} shows the primal dual gap and it will decrease to zero by using any algorithm from Table~\ref{tab:my-table}. To construct $Q_k$, we use $\gamma_k=\frac{1}{\norm[]{u_k}^2}$, \textcolor{black}{$\Tau=\tau\opid=0.1\opid$ and $\Sigma=\sigma\opid=0.1\opid$}. \textcolor{black}{Assumption~\ref{Assumption:2} is satisfied. However, Assumption~\ref{assumption1} is not satisfied since we still not include the condition that $(1+\eta_k)M_k\succeq M_\kp$with $(\eta_{k})_{k\in\N}\in\ell_+^1(\N)$ in this numerical experiment}. As we can observe from Figure~\ref{fig:image-denoising-infimalconvolution}, we have linear convergence for quasi-Newton type methods as what we expected in Theorem~\ref{thm:1} and~\ref{thm:relax}. \textcolor{black}{Figure~\ref{fig:image-denoising-infimalconvolution} shows that in this experiment quasi-Newton type algorithms are in fact not more efficient compared to FBS or IFBS, which is plausible due to the well-conditioned $H=\opid$. }
\subsection{Conclusion}
In this paper, we extended the framework of \cite{becker2019quasi} for variable metrics to the setting of resolvent operators, solving efficiently the monotone inclusion problem (\ref{vi}) consisting of a set-valued operator $A$ and a co-coercive operator $B$.
We proposed two variants of quasi-Newton Forward-Backward Splitting. We develop a general efficient resolvent calculus that applies to this quasi-Newton setting. The convergence of the variant with relaxation requires  mild assumptions on the metric which are easy to satisfy, whereas the other variant implements an inertial feature and is therefore often fast. As a special case of this framework, we developed an inertial quasi-Newton primal-dual algorithm that can be flexibly applied to a large class of saddle point problems. \textcolor{black}{Throughout the paper, we employ a rank-1 perturbed variable metric denoted as $M_k=M_0+\mathrm{s}U_kU_k^*$ with $r=1$ and $\map{U_k}{\R}{\mathcal{H}}$ which is generated using the $0$-memory SR1 method. Alternatively, one can generate the variable metric using $m$-memory quasi-Newton method (refer to \cite{kanzow2022efficient, wang2023quasi}), wherein $\map{U_k}{\R^m}{\mathcal{H}}$. Consequently, we are able to derive an $m$-memory quasi-Newton primal-dual method.}

\textcolor{black}{Another potential application of our resolvent calculus in Theorem \ref{thm:resolvent} lies in non-diagonal preconditioning of the primal-dual method (PDHG). Moreover, there are many directions to improve our methods.  Further investigation is needed to develop an optimal sequence of variable metric $(M_k)_{k\in\N}$. Given that our variable metrics are currently designed solely based on the geometry of the single valued operator $B$, it is reasonable to explore a metric that adapts to both operators $A$ and $B$. Additionally, there remains an open question regarding how to eliminate the condition that the growth of $M_k$ is controlled by a summable sequence $(\eta_k)_{k\in\N}$ while ensuring fast convergence. }

\section*{Acknowledgement}
    Shida Wang, Jalal Fadili and Peter Ochs are supported by the ANR-DFG joint project TRINOM-DS under number ANR-20-CE92-0037-01, and OC150/5-1.
\begin{appendices}
\section{Appendix}
\subsection{Proof of Lemma~\ref{lem:JMT}}\label{proof:JMT}
\begin{proof}
We assume $y = M^{-1/2}\circ J_{M^{-1/2}TM^{-1/2}}\circ M^{1/2}(z)$. Then, we obtain
\begin{equation}
    \begin{split}
        y &= M^{-1/2}\circ J_{M^{-1/2}TM^{-1/2}}\circ M^{1/2}(z)\\
        M^{1/2}y &= J_{M^{-1/2}TM^{-1/2}}\circ M^{1/2}(z)\\
        M^{1/2}y &= (I+M^{-1/2}TM^{-1/2})^{-1} (M^{1/2}z )\\
        (M^{1/2}z )&\in (I+M^{-1/2}TM^{-1/2})M^{1/2}y\\
        (M^{1/2}z)&\in (M^{1/2}+M^{-1/2}T)y\\
        z&\in (\opid+M^{-1}T)y\\
        y &= (\opid+M^{-1}T)^{-1} (z )\\
        y &= J^M_T(z)\,.
    \end{split}
\end{equation}
\end{proof}
\subsection{Proof of Lemma~\ref{lemma:JMALip}}\label{PRoof:JMA}

\begin{proof} This proof is adapted from \cite{bauschke:hal-00643354}. 
Let $(u,v) = (J^{M}_A (x), J^{M}_A(y))$ for some $x,y\in \mathcal{H}$. By the definition of resolvent operator $J^{M}_A$, we obtain
\begin{equation*}
    u= J^{M}_A (x) \iff M(x-u)\in Au\,.
\end{equation*}
Similarly, we obtain $M(y-v)\in Av$. Then $\gamma_A$-strong monotonicity of $A$ yields
\begin{equation*}
\begin{split}
    \scal{M(x-u)-M(y-v)}{u-v}&\geq \gamma_A \norm[]{u-v}^2\\
    \scal{M(x-y)}{u-v}-\scal{M(u-v)}{u-v}&\geq \gamma_A \norm[]{u-v}^2\\
    \scal{M(x-y)}{u-v}&\geq \gamma_A\norm[]{u-v}^2 + \norm[M]{u-v}^2\,.\\
\end{split}
\end{equation*}
Since $\norm[]{M}$ is bounded by $C$, we obtain
\begin{equation}
    \scal{M(x-y)}{u-v} \geq \gamma_A\norm[]{u-v}^2 + \norm[M]{u-v}^2 \geq (1+\tfrac{\gamma_A}{C})\norm[M]{u-v}^2\,.
\end{equation}
Consequently, $J^{M}_A$ is $(1+\tfrac{\gamma_A}{C})$-co-coercive in the metric $M$ and Lipschitz continuous with constant $1/( 1+\tfrac{\gamma_A}{C})$ with respect to the norm $\norm[M]{\cdot}$.
\end{proof}

\subsection{Proof of Theorem~\ref{thm:1}}\label{App:proofthm1}
\begin{proof}
For convenience, we set $B_k\coloneqq M_k^{-1}B$. Fix $z\in \mathrm{zer}(A+B)$ which is equivalent to $z=J_A^{M_k}(z-B_kz)$. 
We set $\hat z_{k+1}\coloneqq J_A^{M_k}(\bar z_k - B_k\bar z_k)$, i.e., $z_{k+1}=\hat{z}_{k+1}+\epsilon_k$.\\
\textbf{Boundedness}:\\
First, we are going to show that $\norm[M_k]{z_{k}-z}$ is bounded.
The following are several useful \textcolor{black}{estimates} we will use later.
Assumption~\ref{assumption1} yields
\begin{equation}\label{ineq:Mk}
    \norm[M_{k+1}]{z_{k+1}-z}^2\leq(1+\eta_k)\norm[M_k]{z_{k+1}-z}^2\,.
\end{equation}
Since $z_{k+1}=\hat{z}_{k+1}+\epsilon_k$, it follows  that
\begin{equation}\label{ineq:error}
    \norm[M_k]{z_{k+1}-z}^2= \norm[M_k]{\hat z_{k+1} -z + \epsilon_k}^2\leq \norm[M_k]{\hat z_{k+1} -z}^2+2\norm[M_k]{\epsilon_k}\norm[M_k]{\hat z_{k+1} -z} + \norm[M_k]{\epsilon_k}^2\,.
\end{equation}
The assumption that $B$ is $\beta$-co-coercive yields that
\begin{equation}\label{bcocoercive}
    \scal{\bar z_k -z}{B_k\bar z_k - B_kz}_{M_k}=\scal{\bar z_k -z}{B\bar z_k - Bz}\geq \beta \norm[]{B\bar z_k - Bz}^2\,.
\end{equation}
The assumption that $M_k -\tfrac{1}{2\beta}\opid \in \mathcal{S}_{\kappa}(\mathcal{H})$ yields that $2\beta\opid-M_k^{-1}\in\mathcal{S}_{++}(\mathcal{H})$.\\
The co-coercivity of $B$ and monotonicity of $B$ yield the following \textcolor{black}{estimates} $\mathrm{(I)}$ and $\mathrm{(II)}$ respectively:
\begin{align*}
    \norm[M_k]{(\bar z_k - B_k\bar z_k)-(z-B_kz)}^2&= \norm[M_k]{\bar z_k-z}^2-2\scal{\bar z_k-z}{B_k\bar z_k-B_kz}_{M_k}+\norm[M_k]{B_k\bar z_k-B_kz}^2\nonumber\\
    &\overset{\mathrm{(i)}}{=} \norm[M_k]{\bar z_k-z}^2-2\scal{\bar z_k-z}{B\bar z_k-Bz}+\norm[M_k^{-1}]{B\bar z_k-Bz}^2\nonumber\\
    & \overset{\mathrm{(ii)}}{\leq}\norm[M_k]{\bar z_k -z}^2 - \norm[2\beta-M_{k}^{-1}]{B\bar z_{k} - Bz}^2  \label{est:quadratic:I}\tag{I}\\
    &\mathrm{or}\\
    &\overset{\mathrm{(iii)}}{\leq}\norm[M_k]{\bar z_k -z}^2 - \norm[\beta-M_{k}^{-1}]{B\bar z_{k} - Bz}^2-\gamma_B\norm[]{\bar z_k -z}^2\,,\label{est:quadratic:II}\tag{II} \\
\end{align*}
where $\mathrm{(i)}$ uses $\norm[M_k]{B_k\bar z_k-B_k z}^2=\norm[M_k^{-1}]{B\bar z_k-B z}^2$, $\mathrm{(ii)}$ uses (\ref{bcocoercive}) and $\mathrm{(iii)}$ uses strong monotonicity of $B$. Note that we use the shorthand $\beta-M_k^{-1}$ for $\beta\opid-M_k^{-1}$.
The fact that $J^{M_k}_A$ is firmly non-expansive since $A$ is maximally monotone with respect to $M_k$ implies that
\begin{equation}\label{non-expansive}
\begin{split}
    \norm[M_k]{\hat z_{k+1} - z}^2 &= \norm[M_k]{J^{M_k}_{A}(\bar z_k - B_k\bar z_k) -J^{M_k}_{A}(z-B_kz)}^2 \\
    &\leq \norm[M_k]{(\bar z_k - B_k\bar z_k)-(z-B_kz)}^2 \\&\quad - \norm[M_k]{(\opid-J^{M_k}_{A})(\bar z_k -  B_k\bar z_k)-(\opid-J^{M_k}_{A})(z-B_kz)}^2\\
    &\leq\norm[M_k]{\bar z_k -z}^2 - \norm[2\beta-M_{k}^{-1}]{B\bar z_{k} - Bz}^2  -\norm[M_k]{(\bar z_{k} -\hat z_{k+1})-(B_k\bar z_k - B_k z)}^2\,,\\
\end{split}
\end{equation}
where the last inequality uses \eqref{est:quadratic:I}. It follows from Assumption~\ref{assumption1} that the term $\norm[2\beta -M_k^{-1}]{B\bar z_k-Bz}^2\geq 0$. We continue to bound the first term on the right hand side of (\ref{non-expansive}).
Using \cite[Lemma 2.14]{bauschke:hal-00643354} and the definition of $\bar z_k$, we obtain the following:
\begin{equation}\label{ineq:alphaid}
    \norm[M_k]{\bar z_k -z}^2=(1+\alpha_k)\norm[M_k]{z_k - z}^2 - \alpha_k \norm[M_k]{z_{k-1} - z}^2 + (1+\alpha_k)\alpha_k \norm[M_k]{z_{k}-z_{k-1}}^2\,,
\end{equation}
and by using the triangle inequality, we also obtain another estimation:
\begin{equation}\label{ineq:alphaineq}
    \norm[M_k]{\bar z_k -z} \leq (1+\alpha_k)\norm[M_k]{z_k -z} + \alpha_k\norm[M_k]{z_{k-1}-z}\,.
\end{equation}
In order to address complete update step, we make the following estimation:
\begin{equation}\label{est:update}
    \begin{split}
        \norm[M_{k+1}]{z_{k+1}-z}^2 &\leq (1+\eta_k)\norm[M_k]{z_{k+1}-z}^2\\
        &\overset{\mathrm{(i)}}{\leq} (1+\eta_k) \big(\norm[M_k]{\hat z_{k+1} - z}^2+2\norm[M_k]{\epsilon_k}\norm[M_k]{\hat z_{k+1}-z}+\norm[M_k]{\epsilon_k}^2\big)\\
        &\overset{\mathrm{(ii)}}{\leq} (1+\eta_k) \big(\norm[M_k]{\bar z_{k} - z}^2+2\norm[M_k]{\epsilon_k}\norm[M_k]{\bar z_{k}-z}+\norm[M_k]{\epsilon_k}^2 \\&\quad- \norm[2\beta-M_{k}^{-1}]{B\bar z_{k} - Bz}^2 -\norm[M_k]{(\bar z_{k} -\hat z_{k+1})-(B_k\bar z_k - B_k z)}^2\big)\\
        &\overset{\mathrm{(iii)}}{\leq}(1+\eta_k)\big( (1+\alpha_k)\norm[M_k]{z_k - z}^2 - \alpha_k \norm[M_k]{z_{k-1} - z}^2 \\&\quad+ (1+\alpha_k)\alpha_k \norm[M_k]{z_{k}-z_{k-1}}^2 \\&\quad +2\norm[M_k]{\epsilon_k}((1+\alpha_k)\norm[M_k]{z_k -z} + \alpha_k\norm[M_k]{z_{k-1}-z}) \\&\quad + \norm[M_k]{\epsilon_k}^2- \norm[2\beta-M_{k}^{-1}]{B\bar z_{k} - Bz}^2 -\norm[M_k]{(\bar z_{k} -\hat z_{k+1})-(B_k\bar z_k - B_k z)}^2\big)\\
        &\overset{\mathrm{(iv)}}{\leq}(1+\eta_k)\big( \norm[M_k]{z_k - z}^2 \\&\quad +\alpha_k(\norm[M_k]{z_k - z}- \norm[M_k]{z_{k-1} - z})(\norm[M_k]{z_k - z}+ \norm[M_k]{z_{k-1} - z}) \\&\quad+ (1+\alpha_k)\alpha_k \norm[M_k]{z_{k}-z_{k-1}}^2 \\&\quad +2\norm[M_k]{\epsilon_k}((1+\alpha_k)\norm[M_k]{z_k -z} + \alpha_k\norm[M_k]{z_{k-1}-z}) \\&\quad + \norm[M_k]{\epsilon_k}^2 - \norm[2\beta-M_{k}^{-1}]{B\bar z_{k} - Bz}^2-\norm[M_k]{(\bar z_{k} -\hat z_{k+1})-(B_k\bar z_k - B_k z)}^2\big)\\
        &\overset{\mathrm{(v)}}{\leq}(1+\eta_k)\big( \norm[M_k]{z_k - z}^2  +\alpha_k\norm[M_k]{z_k - z_{k-1}}(\norm[M_k]{z_k - z}+ \norm[M_k]{z_{k-1} - z}) \\&\quad+ (1+\alpha_k)\alpha_k \norm[M_k]{z_{k}-z_{k-1}}^2  +2\norm[M_k]{\epsilon_k}((1+\alpha_k)\norm[M_k]{z_k -z} + \alpha_k\norm[M_k]{z_{k-1}-z}) \\&\quad + \norm[M_k]{\epsilon_k}^2 - \norm[2\beta-M_{k}^{-1}]{B\bar z_{k} - Bz}^2-\norm[M_k]{(\bar z_{k} -\hat z_{k+1})-(B_k\bar z_k - B_k z)}^2\big)\,.\\
    \end{split}
\end{equation}
where $\mathrm{(i)}$ uses (\ref{ineq:error}), $\mathrm{(ii)}$ uses (\ref{non-expansive}), $\mathrm{(iii)}$ uses (\ref{ineq:alphaid}) and (\ref{ineq:alphaineq}), $\mathrm{(iv)}$ uses factorization of the quadratic, and $\mathrm{(v)}$ uses the triangle inequality to obtain the bound $\norm[M_k]{z_k - z}-\norm[M_k]{z_{k-1}-z}\leq\norm[M_k]{z_k-z_{k-1}}$.

Now, our goal is to conclude boundedness using Lemma~\ref{lemmaPolyak}.
For simplicity, we set:
\begin{equation*}
    \begin{cases}
        e_k &\coloneqq \alpha_k\norm[M_k]{z_k-z_{k-1}}^2\\
        r_k&\coloneqq \alpha_k \norm[M_k]{z_k-z_{k-1}}\\
        \theta_k &\coloneqq \norm[M_k]{z_k-z}\\
        m_k &\coloneqq \norm[M_k]{z_{k-1}-z} \\
        p_k&\coloneqq\norm[2\beta-M_{k}^{-1}]{B\bar z_{k} - Bz}^2\\
        q_k&\coloneqq \norm[M_k]{(\bar z_{k} -\hat z_{k+1})-(B_k\bar z_k - B_k z)}^2 \,.\\
    \end{cases}
\end{equation*}
By Assumption~\ref{assumption1}, we have $m_k\leq (1+\eta_{k-1})\theta_{k-1}$. Without loss of generality, we can assume $0<\eta_k<1$ for any $k\in\mathbb{N}$. 
Replacing each term in (\ref{est:update}) with new corresponding notations, we obtain: 
\begin{equation}\label{est:update_simple}
\begin{split}
\theta_{k+1}^2&\leq (1+\eta_k)\big(\theta^2_{k}+r_k(\theta_k+ m_k) \\ &\quad+ (1+\alpha_k)e_k + 2\norm[M_k]{\epsilon_k}((1+\alpha_k)\theta_k+\alpha_km_k) +\norm[M_k]{\epsilon_k}^2 -p_k-q_k \big)\\
&\leq (1+\eta_k)\big(\theta^2_{k}+r_k(\theta_k+ (1+\eta_{k-1})\theta_{k-1}) \\ &\quad+ (1+\alpha_k)e_k + 2\norm[M_k]{\epsilon_k}((1+\alpha_k)\theta_k+\alpha_k(1+\eta_{k-1})\theta_{k-1}) +\norm[M_k]{\epsilon_k}^2 \big)\\
&\leq (1+\eta_k)\big(\theta^2_{k}+r_k(\theta_k+ 2\theta_{k-1}) + (1+
\Lambda)e_k + 2\norm[M_k]{\epsilon_k}((1+\Lambda)\theta_k+2\Lambda\theta_{k-1}) +\norm[M_k]{\epsilon_k}^2 \big)\,,\\
\end{split}    
\end{equation}
where the last inequality uses $0<\alpha_k\leq \Lambda$ and $0<\eta_k<1$.
Now, we claim that $\theta_k$ is bounded in two steps. We introduce an auxiliary bounded sequence $(C_k)_{k\in\mathbb{N}}$ (step 1) such that $\theta_k\leq C_k$ for any $k\in\mathbb{N}$ (step 2). The boundedness of $\theta_k$ follows from that of $C_k$.\\
Step1: We construct a sequence $C_k$ as the following: 
\begin{equation}\label{seq:Ck}
    \begin{cases}
        C_0 &= \max\{\theta_0,1\}\,,\\
        C_{k+1} &= (1+\eta_k)C_k + \nu_k\,,\\
    \end{cases}
\end{equation}
where $\nu_k = (1+\eta_k)((1+\Lambda)e_k+2r_k+(1+3\Lambda)\norm[M_k]{\epsilon_k})$. From our assumptions, it holds that $(M_k)_{k\in  \mathbb{N}}$ is bounded from above, $(r_k)_{k\in\mathbb{N}}\in\ell_{+}^1(\mathbb{N}), (e_k)_{k\in\mathbb{N}}\in \ell_+^1(\mathbb{N})$ and  $(\norm[]{\epsilon_k})_{k\in\mathbb{N}}\in \ell_{+}^1(\mathbb{N})$, which implies $(\nu_k)_{k\in\mathbb{N}}\in\ell_{+}^1(\mathbb{N})$. 
Using Lemma~\ref{lemmaPolyak}, we obtain the convergence of $C_k$ to some $C_{\infty}<+\infty$.\\
Step2: From the update step of (\ref{seq:Ck}), we observe that $(C_k)_{k}$ is a non-decreasing sequence and $C_k\geq 1$ for any $k\in\mathbb{N}$. We claim that for each $k$, $\theta_k \leq  C_k$. We argument by induction. Clearly, we have $\theta_0 \leq C_0$. Assume $\theta_i \leq C_i$ holds true for $i\leq k$. Then, (\ref{est:update_simple}) yields that
\begin{equation}\label{est:theta_kp1}
\begin{split}
    \theta_{k+1}^2 &\leq (1+\eta_k)\big(C_k^2+r_k(C_k+2C_{k-1})  + (1+\Lambda)e_k + 2\norm[M_k]{\epsilon_k}((1+\Lambda)C_k + 2\Lambda C_{k-1})+\norm[M_k]{\epsilon_k}^2\big)\\
    &\overset{\mathrm{(*)}}{\leq} (1+\eta_k)\big(C_k^2+4r_kC_k  + 2(1+\Lambda)e_kC_k + 2(1+3\Lambda)\norm[M_k]{\epsilon_k}C_k+\norm[M_k]{\epsilon_k}^2\big)\\
    &\leq (1+\eta_k)\big(C_k+2r_k+(1+\Lambda)e_k + (1+3\Lambda)\norm[M_k]{\epsilon_k}\big)^2\,,\\
\end{split}
\end{equation}
where $\mathrm{(*)}$ uses $C_k\geq C_{k-1}\geq 1$ and $r_k>0$. 
By the definition of $C_{k+1}$, we obtain
\begin{equation}
\begin{split}
    \theta_{k+1} &\overset{\mathrm{(i)}}{\leq} \sqrt{1+\eta_k} \big(C_k+2r_k +(1+\Lambda)e_k + (1+3\Lambda)\norm[M_k]{\epsilon_k}\big)\\
    &\overset{\mathrm{(ii)}}{\leq} (1+\eta_k)C_k+(1+\eta_k)(2r_k + (1+\Lambda)e_k + (1+3\Lambda)\norm[M_k]{\epsilon_k}) \\
    &\overset{\mathrm{(iii)}}{\leq} (1+\eta_k)C_k+\nu_k \\
    &=C_{k+1}\,,\\
\end{split}
\end{equation}
where $\mathrm{(i)}$ uses (\ref{est:theta_kp1}), $\mathrm{(ii)}$ holds true since $(1+\eta_k)>1$ and $\mathrm{(iii)}$ uses definition of $\nu_k$. This concludes the induction, and we deduce that $\theta_k$ is bounded and therefore, $ z_k$ and $\bar z_k$ are both bounded. \\
\textbf{Weak convergence}:\\
This part of the proof is adapted from the one for \cite[Theorem 4.1]{combettes2014forwardbackward}. Since $\theta_k$ is bounded, we set $\zeta\coloneqq \sup_{k\in\mathbb{N}}\theta_k$. 
The last inequality in (\ref{est:update}) implies that
\begin{equation}\label{ineq:thetasquare}
\begin{split}
    \theta_{k+1}^2&\leq(1+\eta_k)\big(\theta_k^2+r_k(\zeta+2\zeta)+(1+\Lambda)e_k+2(1+3\Lambda)\norm[M_k]{\epsilon_k}\zeta+\norm[M_k]{\epsilon_k}^2-p_k-q_k\big)\\
    &\leq(1+\eta_k)\big(\theta_k^2+3r_k\zeta+(1+\Lambda)e_k+2(1+3\Lambda)\norm[M_k]{\epsilon_k}\zeta+\norm[M_k]{\epsilon_k}^2-p_k-q_k\big)\\
    &\leq\theta_k^2+\eta_k\theta_k^2+ (1+\eta_k)\big(3r_k\zeta+(1+\Lambda)e_k+(2+6\Lambda)\norm[M_k]{\epsilon_k}\zeta+\norm[M_k]{\epsilon_k}^2\big) - p_k -q_k\\
    &\leq\theta_k^2+\underbrace{\eta_k\zeta^2+ 2(3r_k\zeta+(1+\Lambda)e_k+(2+6\Lambda)\norm[M_k]{\epsilon_k}\zeta+\norm[M_k]{\epsilon_k}^2)}_{{\delta_k}}-p_k-q_k\,.\\
\end{split}
\end{equation}
We set $\delta_k\coloneqq\eta_k\zeta^2+ 2(3r_k\zeta+(1+\Lambda)e_k+(2+6\Lambda)\norm[M_k]{\epsilon_k}\zeta+\norm[M_k]{\epsilon_k}^2)$ and observe that $(\delta_k)_{k\in\mathbb{N}}\in \ell_{+}^1(\mathbb{N})$.
Now, (\ref{ineq:thetasquare}) yields that
\begin{equation}\label{ineq:thetasquare_simple}
    \theta_{k+1}^2\leq \theta_k^2 + \delta_{k}\,.
\end{equation}
Using (\ref{ineq:thetasquare_simple}) and Lemma~\ref{lemmaPolyak}, we obtain the convergence of $\theta_k^2= \norm[M_k]{z_k-z}^2$ for any $z\in \mathrm{zer}(A+B)$.
 Rearranging (\ref{ineq:thetasquare}) to $p_k\leq \theta_k^2-\theta_{k+1}^2+\delta_{k}$, using Assumption~\ref{assumption1} and summing it for $k=0,\cdots, N$ , \textcolor{black}{there exists some $\eps\coloneqq \rho_{\min}(2\beta\opid-M_{k}^{-1})>0$ s.t.}
\begin{equation}
\begin{split}
    \eps\sum_{k=0}^N\norm[]{B\bar z_k-Bz}^2&\leq\sum_{k=0}^N\norm[2\beta-M_k^{-1}]{B\bar z_k-Bz}^2= \sum_{k=0}^N p_k\leq\theta_0^2 - \theta_N^2+\sum_{k=0}^N \delta_k \leq \zeta^2+\sum_{k=0}^N\delta_k\,.
\end{split}
\end{equation}
Since $(\delta_k)_{k\in \mathbb{N}}\in \ell^1_+(\mathbb{N})$, by taking limit as $N\to+\infty$, we obtain
\begin{equation}\label{convB}
    \sum_{k\in\mathbb{N}}\norm[]{B\bar z_k-Bz}^2\leq \frac{1}{\eps^2}(\zeta^2+\sum_{k\in\mathbb{N}}\delta_k )< +\infty\,.
\end{equation}
Similarly, we obtain from (\ref{ineq:thetasquare}) using $q_k\leq \theta_k^2-\theta_{k+1}^2 +\delta_k$ that
\begin{equation}\label{sumD}
    \sum_{k\in\mathbb{N}} \norm[M_k]{(\bar z_k-\hat z_{k+1})-(B_k\bar z_k - B_kz)}^2< +\infty\,.
\end{equation}
Set $z^*$ as an arbitrary weak sequential cluster point of $( z_k)_{k\in\mathbb{N}}$, namely, a subsequence $  z_{k_n}\weakto z^*$ as $n\to\infty$.

In order to obtain weak convergence of $ z_k$, by Proposition~\ref{fejer} with $\phi(t)=t^2$, (\ref{ineq:thetasquare_simple}) and Assumption~\ref{assumption1}, it suffices to show that $z^*\in \mathrm{zer}(A+B)$.
It follows from the selection of $\alpha_k$ that:
\begin{equation}\label{diffbarz}
\begin{split}
    \norm[]{\bar z_k - z_{k}}&\leq \alpha_k \norm[]{z_k-z_{k-1}}\to 0\,.\\
\end{split}
\end{equation}
Thus, (\ref{diffbarz}) yields $\bar z_{k_n}\weakto z^*$. From (\ref{convB}), we obtain that $B\bar z_{k_n}\to Bz$ as $n\to\infty$. Since $B$ is co-coercive, it is maximally monotone and we can use the weak strong graph closedness of $B$ in Proposition~\ref{SeqClosed} to infer that $(z^*,B z) \in \Graph B$, i.e. $Bz \in Bz^*$. However, since $B$ is single valued, we obtain $Bz^* = B z$ and hence $B\bar z_{k_n}\to Bz^*$.
Setting
    $u_k\coloneqq M_k(\bar z_{k}-\hat z_{k+1})-B\bar z_k$, by definition of the resolvent $J^{M_k}_A$, we have $u_k\in A(\hat z_{k+1})$ for all $k\in \mathbb{N}$.
From (\ref{sumD}), we obtain as $k\to +\infty$,
\begin{equation}
    \begin{split}
        \norm[]{u_k+B z^*}&=\norm[]{M_k(\bar z_k-\hat z_{k+1}-B_k\bar z_k+B_kz^*)}\\
        &\leq C\norm[]{\bar z_k-\hat z_{k+1}-B_k\bar z_k+B_kz^*}\\
        &\leq \frac{C}{\sqrt{\sigma}}\norm[M_k]{\bar z_k-\hat z_{k+1}-B_k\bar z_k+B_kz^*}\to 0\,.\\
    \end{split}
\end{equation}
Furthermore, from (\ref{convB}) and (\ref{sumD}), we have
\begin{equation}\label{z-zhat}
\begin{split}
    \norm[]{\bar z_k-\hat z_{k+1}}&\leq \norm[]{\bar z_k-\hat z_{k+1}-B_k\bar z_k +B_k z^*}+ \norm[]{B_k\bar z_k-B_kz^*}\\
    &\leq \norm[]{\bar z_k-\hat z_{k+1}-B_k\bar z_k +B_kz^*}+ \frac{1}{\sqrt{\sigma}}\norm[]{B\bar z_k-Bz^*}\to 0\,.\\
\end{split}
\end{equation}
Therefore, together with (\ref{z-zhat}),  $\bar z_{k_n}\weakto z^*$ implies $\hat z_{k_{n+1}}\weakto z^*$ as $n\to\infty$.
Now we already have $u_{k_n}\to - Bz^*$ as $n\to\infty$ and
\begin{equation}
    (\forall k \in \mathbb{N})\colon\quad(\hat z_{k_n+1},u_{k_n})\in\Graph A\,.
\end{equation}
Since $A$ is maximally monotone and using Proposition~\ref{SeqClosed}, we infer that $-Bz^* \in Az^*$, hence $z^*\in \mathrm{zer}(A+B)$. As mentioned above, the result follows from Proposition~\ref{fejer} with $\phi(t) = t^2$.\\
\textbf{Convergence rate}:\\
In the following part, we are going to show the convergence rate of Algorithm~\ref{Alg:mainAlg1}:
Assume $\epsilon_k\equiv 0$ for $k\in\mathbb{N}$ and either $\gamma_A>0$ or $\gamma_B>0$. 
Because of (\ref{ineq:Mk}), Assumption~\ref{assumption1} and Lipschitz continuity of $J_A^{M_k}$, we obtain for any $z\in \mathrm{zer}(A+B)$ that
\begin{equation}
\begin{split}
    \norm[M_{k+1}]{z_{k+1}-z}^2 &\leq (1+\eta_k)\norm[M_{k}]{z_{k+1}-z}^2\\
    &\overset{\textcolor{black}{Lemma~\ref{lemma:JMALip} }}{\leq}(1+\eta_k)\Big(\frac{1}{1+\frac{\gamma_A}{C}}\Big)^2\norm[M_k]{(\bar z_k - B_k\bar z_k)-(z-B_kz)}^2\\
    &\overset{\mathrm{(i)}}{\leq}(1+\eta_k)\Big(\frac{1}{1+\frac{\gamma_A}{C}}\Big)^2\big(   \norm[M_k]{\bar z_k -z}^2 - \norm[\beta-M_{k}^{-1}]{B\bar z_{k} - Bz}^2-\gamma_B\norm[]{\bar z_k -z}^2\big) \\
    &\overset{\mathrm{(ii)}}{\leq}(1+\eta_k)\Big(\frac{1}{1+\frac{\gamma_A}{C}}\Big)^2(1-\frac{\gamma_B}{C})\big(   \norm[M_k]{\bar z_k -z}^2 \big) \\
    &\overset{\mathrm{(iii)}}{=}(1+\eta_k)\frac{(1-\frac{\gamma_B}{C})}{(1+\frac{\gamma_A}{C})^2}\big((1+\alpha_k)\norm[M_k]{z_k-z}^2-\alpha_k\norm[M_k]{z_{k-1}-z}^2 \\&\quad+(1+\alpha_k)\alpha_k\norm[M_k]{z_k-z_{k-1}}^2 \big)\\
    &=(1+\eta_k)\frac{(1-\frac{\gamma_B}{C})}{(1+\frac{\gamma_A}{C})^2}\big(\norm[M_k]{z_k-z}^2+\alpha_k(\norm[M_k]{z_k-z}^2-\norm[M_k]{z_{k-1}-z}^2 )\\&\quad+(1+\alpha_k)\alpha_k\norm[M_k]{z_k-z_{k-1}}^2 \big)\\
    &\overset{\mathrm{(iv)}}{\leq}(1+\eta_k)\frac{(1-\frac{\gamma_B}{C})}{(1+\frac{\gamma_A}{C})^2}\big(\norm[M_k]{z_k-z}^2+\alpha_k\norm[M_k]{z_k-z_{k-1}}(\norm[M_k]{z_k-z}\\&\quad+\norm[M_k]{z_{k-1}-z})+(1+\alpha_k)\alpha_k\norm[M_k]{z_k-z_{k-1}}^2 \big)\\
    &\overset{\mathrm{(v)}}{\leq}(1+\eta_k)\frac{(1-\frac{\gamma_B}{C})}{(1+\frac{\gamma_A}{C})^2}\big(\norm[M_k]{z_k-z}^2+\alpha_k\norm[M_k]{z_k-z_{k-1}}(\norm[M_k]{z_k-z}\\&\quad+(1+\eta_{k-1})\norm[M_{k-1}]{z_{k-1}-z} )+(1+\alpha_k)\alpha_k\norm[M_k]{z_k-z_{k-1}}^2 \big)\\
    &\overset{\mathrm{(vi)}}{=}(1+\eta_k)\frac{(1-\frac{\gamma_B}{C})}{(1+\frac{\gamma_A}{C})^2}\big(\norm[M_k]{z_k-z}^2+3\alpha_k\zeta\norm[M_k]{z_k-z_{k-1}}\\&\quad+(1+\Lambda)\alpha_k\norm[M_k]{z_k-z_{k-1}}^2 \big)\\
\end{split}
\end{equation}
where $\mathrm{(i)}$ uses \eqref{est:quadratic:II}, $\mathrm{(ii)}$ uses the fact $M_k$ is bounded uniformly and the assumption that $M_k-\tfrac{1}{\beta}\opid\in\mathcal{S}_{\kappa}(\mathcal{H})$, $\mathrm{(iii)}$ uses (\ref{ineq:alphaid}), $\mathrm{(iv)}$ uses factorization of the quadratic and uses the triangle inequality to obtain the bound $\norm[M_k]{z_k-z}-\norm[M_k]{z_{k-1}-z}\leq \norm[M_k]{z_k-z_{k-1}}$, $\mathrm{(v)}$ uses Assumption~\ref{assumption1} and $\mathrm{(vi)}$ uses boundedness of $\alpha_k$ and $\norm[M_k]{z_k-z}$.
Since either $\gamma_A>0$ or $\gamma_B>0$, $\frac{1-\frac{\gamma_B}{C}}{(1+\frac{\gamma_A}{C})^2}<1$. Then there exists sufficient large $K_0>0$ such that for any $k>K_0$, $(1+\eta_k)\Big(\frac{1-\frac{\gamma_B}{C}}{(1+\frac{\gamma_A}{C})^2}\Big)<1-\xi<1$ for some $\xi\in(0,1)$. 
Thus, we infer that for any $k>K_0$:
\begin{equation}\label{alg1:convrate}
    \norm[M_{k}]{z_{k}-z}^2\leq (1-\xi)^{k-K_0}\norm[M_{K_0}]{z_{K_0}-z}^2+\sum_{i=K_0}^{k-1}(1-\xi)^{k-i}\alpha_i(3\zeta\norm[M_i]{z_i-z_{i-1}}+ (1+\Lambda)\norm[M_i]{z_i-z_{i-1}}^2)\,.
\end{equation}
Let $\Theta = 3\zeta + (1+\Lambda)$. Therefore (\ref{alg1:convrate}) can be simplified as the following:

\begin{equation}
    \norm[M_{k}]{z_{k}-z}^2\leq (1-\xi)^{k-K_0}\norm[M_{K_0}]{z_{K_0}-z}^2+\sum_{i=K_0}^{k-1}\Theta(1-\xi)^{k-i}\alpha_i\max\{\norm[M_i]{z_i-z_{i-1}}, \norm[M_i]{z_i-z_{i-1}}^2\}\,.
\end{equation}
Since $z_k$ is bounded and $M_k\in S_{\sigma}(\mathcal{H})$, it follows that 
\begin{equation}\label{convrate}
     \norm[]{z_{k}-z}^2\leq \tfrac{1}{\sigma}(1-\xi)^{k-K_0}\norm[M_{K_0}]{z_{K_0}-z}^2+O(\sum_{i=K_0}^{k-1}(1-\xi)^{k-i}\alpha_i)\,.
\end{equation}
Furthermore, if $\alpha_i\equiv 0$, for $k>K_0$ and for any $z\in \mathrm{zer}(A+B)$, we obtain linear convergence:
\begin{equation}
    \norm[]{z_{k+1}-z}^2\leq \frac{1}{\sigma} (1-\xi)^{k-K_0}\norm[M_{K_0}]{z_{K_0}-z}^2\,.
\end{equation}
If $\alpha_k\neq 0$, $\alpha_k=O(\tfrac{1}{k^2})$ and $K_0$ large enough, then $\norm[]{z_{k+1}-z}^2$ converges in the rate of $O(\tfrac{1}{k})$ for $k>K_0$ according to \cite[Lemma 2.2.4 (Chung)]{Polyak}; if $\alpha_k\neq 0$ and $\alpha_k=O(q^k)$ for $q=1-\xi$ and $k>K_0$, then $\norm[]{z_{k+1}-z}^2$ converges in the rate of $O(kq^k)$ for $k>K_0$ since (\ref{convrate}).

\qedhere
\end{proof}

\subsection{Proof of Theorem~\ref{thm:relax}}\label{proof:thm:relax}
\begin{proof}
For simplicity, we set
\begin{equation}\label{gammadelta}
    \begin{cases}
        \hat z_{k}\coloneqq J_{A}^{M_k}(z_k-M_k^{-1}Bz_k)\mathrm{,and,\ } \tilde z_k = \hat z+\epsilon_k\,,\\
        \delta_{k}\coloneqq \max\{1,\rho\}\norm[]{\epsilon_k}\,,
    \end{cases}
\end{equation}
where \textcolor{black}{$\rho=\frac{1}{2}\sqrt{\tfrac{C}{\sigma c}}{(C+\tfrac{1}{\beta})}$} and $(\delta_k)_{k\in\mathbb{N}}\in\ell_{+}^1(\mathbb{N})$. We set $z^*$ such that $-Bz^*\in Az^*$. \\
\textbf{Boundedness}:\\
We claim that by choosing proper $t_k$ for each $k\in\mathbb{N}$, we have
\begin{equation}\label{polyakseq}
    \norm[]{z_{k+1}-z^*}\leq \norm[]{z_{k}-z^*}+\delta_{k}.
\end{equation} 
Note that if (\ref{polyakseq}) is satisfied, it follows from Lemma~\ref{lemmaPolyak} that $\norm[]{z_k-z^*}$ is bounded and converges.

To stress the relation between $z_{k+1}$ and $t_k$, we define \textcolor{black}{$z(t)\coloneqq z_k-t [(M_k-B) (z_k-\tilde{z}_k)]$} and we will use $z_{k+1}$ and $z(t_k)$ interchangeably. We also set \textcolor{black}{$\gamma_k(t)\coloneqq(\delta_k+\norm{z_k-z^*})^2-\norm{z(t)-z^*}^2$}. 

In order to prove (\ref{polyakseq}), it is sufficient to show that for proper $t_k$ at each iterate, $\gamma_k(t_k)> 0$.
It results from the definition of $\gamma_k(t_k)$ that
\begin{equation}\label{gamma_kalpha_k}
\begin{split}
    \gamma_k(\textcolor{black}{t})&=(\norm{z_k-z^*}+\delta_k)^2-\norm{\textcolor{black}{z(t)}-z^*}^2\\
    &=\scal{z_k-z^*+\textcolor{black}{z(t)}-z^*}{z_k-\textcolor{black}{z(t)}} +2\delta_{k}\norm{z_k-z^*}+\delta_{k}^2\\
    &=\underbrace{2\textcolor{black}{t}\scal{z_k-z^*}{(M_k-B)(z_k-\tilde{z}_k)}}_{\mathrm{(I)}}-\underbrace{\textcolor{black}{t}^2\norm{(M_k-B)(z_k-\tilde{z}_k)}^2}_{\mathrm{(II)}} \\&\quad +2\delta_{k}\norm{z_k-z^*}+\delta_{k}^2\,.
\end{split}
\end{equation}
We need several useful properties to estimate $\mathrm{(I)}$ and $\mathrm{(II)}$.

It follows from the definition of $\hat z_k$ that $-M_k(\hat {z}_k-z_k)-Bz_k\in A\hat z_k$, and
$\gamma_A$-strong monotonicity of $A$ implies that 
\begin{equation}\label{monoA}
\begin{split}
    \gamma_A\norm[]{\hat z_k -z^*}^2&\leq\scal{\hat {z}_k-z^*}{-M_k(\hat {z}_k-z_k)-Bz_k+Bz^*}\\
    &=\scal{\hat z_k-z^*}{M_k(z_ k-\hat {z}_k)-Bz_k+B\hat  z_k-B\hat z_k+Bz^*}\\
    & = \scal{\hat z_k-z^*}{M_k(z_ k-\hat {z}_k)-Bz_k+B\hat  z_k}-\scal{\hat z_k-z^*}{B\hat z_k-Bz^*}\,.\\
\end{split}
\end{equation}
We deduce by (\ref{monoA}) and $\gamma_B$-strong monotonicity of $B$ that
\begin{equation}\label{b}
    \begin{split}
        \scal{\hat z_k-z^*}{M_k(z_ k-\hat{z}_k)-Bz_k+B\hat z_k}&\geq \scal{\hat z_k-z^*}{B\hat z_k-Bz^*} + \gamma_A\norm[]{\hat z_k - z^*}^2\\&\geq \gamma_B\norm[]{\hat z_k - z^*}^2+\gamma_A\norm[]{\hat z_k - z^*}^2\,.\\
    \end{split}
\end{equation}
$J^{M_k}_A$ is Lipschitz since $A$ is monotone.
Therefore,
\begin{equation}\label{zkz*}
\begin{split}
     \norm[M_k]{\hat z_k-z^*}^2&\leq \norm[M_k]{( z_k - B_k z_k)-(z^*-B_kz^*)}^2\\&= \norm[M_k]{z_k-z^*}^2-2\scal{ z_k-z^*}{B_k z_k-B_kz^*}_{M_k}+\norm[M_k]{B_k z_k-B_kz^*}^2\\
     &\leq \norm[M_k]{z_k-z^*}^2- \norm[2\beta-M^{-1}_k]{Bz_k-Bz}\\
     &\overset{(*)}{\leq} \norm[M_k]{z_k-z^*}^2\,,
\end{split}
\end{equation}
where $(*)$ uses Assumption~\ref{Assumption:2} that $M_k-\frac{1}{\beta}\opid\in \mathcal S_{c}(\mathcal{H})$. Using the assumption that $M_k-\frac{1}{\beta}\opid\in \mathcal S_{c}(\mathcal{H})$ again, we obtain
\begin{equation}\label{est:bk}
    \scal{z_k-\tilde z_k}{(M_k-B)(z_ k-\tilde{z}_k)}\geq \norm[M_k-\tfrac{1}{\beta}\opid]{z_k-\tilde z_k}^2 >0\,.
\end{equation}
Combining (\ref{b}), (\ref{zkz*}), the first term $\mathrm{(I)}$ in (\ref{gamma_kalpha_k}) can be estimated by the following:
\begin{equation}\label{est:b}
\begin{split}
     \mathrm{(I)}&=2\textcolor{black}{t}\scal{z_k-z^*}{(M_k-B)(z_k-\tilde z_k)}\\
     &=2\textcolor{black}{t}\scal{z_k-\tilde z_k}{(M_k-B)(z_k-\tilde z_k)}+2\textcolor{black}{t}\scal{\tilde z_k-\hat z_k}{(M_k-B)(z_k-\tilde z_k)}\\
     &\quad+2\textcolor{black}{t}\scal{\hat z_k- z^*}{(M_k-B)(z_k-\hat z_k)}+2\textcolor{black}{t}\scal{\hat z_k- z^*}{(M_k-B)(\hat z_k-\tilde z_k)}\\
     &\overset{\mathrm{(i)}}{\geq} 
     2\textcolor{black}{t}\scal{z_k-\tilde z_k}{(M_k-B)(z_k-\tilde z_k)}+2\textcolor{black}{t}\scal{\tilde z_k-\hat z_k}{(M_k-B)(z_k-\tilde z_k)}\\
     &\quad+2\textcolor{black}{t}(\gamma_A+\gamma_B)\norm[]{\hat{z}_k-z^*}^2+2\textcolor{black}{t}\scal{\hat z_k- z^*}{(M_k-B)(\hat z_k-\tilde z_k)}\\
     &\overset{\mathrm{(ii)}}{\geq} 
     2\textcolor{black}{t}\scal{z_k-\tilde z_k}{(M_k-B)(z_k-\tilde z_k)} - 2\textcolor{black}{t}\norm[]{\epsilon_k}\norm[]{(M_k-B)(z_k-\tilde z_k)}\\
     &\quad+2\textcolor{black}{t}(\gamma_A+\gamma_B)\norm[]{\hat{z}_k-z^*}^2 -2\textcolor{black}{t}\norm[M_k^{-1}]{(M_k-B)\epsilon_k}\norm[M_k]{\hat z_k-z^*}\\
     &\overset{\mathrm{(iii)}}{\geq} 
     2\textcolor{black}{t}\scal{z_k-\tilde z_k}{(M_k-B)(z_k-\tilde z_k)} - \norm[]{\epsilon_k}^2-\textcolor{black}{t}^2\norm[]{(M_k-B)(z_k-\tilde z_k)}^2\\
     &\quad+2\textcolor{black}{t}(\gamma_A+\gamma_B)\norm[]{\hat{z}_k-z^*}^2 -2\tfrac{1}{\sqrt \sigma}(C+\tfrac{1}{\beta})\textcolor{black}{t}\norm[]{\epsilon_k}\norm[M_k]{  z_k-z^*}\\
     &\overset{\mathrm{(iv)}}{\geq} 
     2\textcolor{black}{t}\scal{z_k-\tilde z_k}{(M_k-B)(z_k-\tilde z_k)} - \norm[]{\epsilon_k}^2-\textcolor{black}{t}^2\norm[]{(M_k-B)(z_k-\tilde z_k)}^2\\
     &\quad+2\textcolor{black}{t}(\gamma_A+\gamma_B)\norm[]{\hat{z}_k-z^*}^2 -2\sqrt{\tfrac{C}{\sigma}}(C+\tfrac{1}{\beta})\textcolor{black}{t}\norm[]{\epsilon_k}\norm[]{  z_k-z^*}\,,\\
\end{split}
\end{equation}
where $\mathrm{(i)}$ uses (\ref{b}), $\mathrm{(ii)}$ uses Cauchy inequality and $\mathrm{(iii)}$ uses (\ref{zkz*}), $2ab\leq a^2+b^2$ and Assumption~\ref{Assumption:2}. 
We set $b_k\coloneqq \scal{z_k-\tilde z_k}{(M_k-B)(z_ k-\tilde{z}_k)}$ and $a_k\coloneqq \norm[]{(M_k-B)(z_k-\tilde z_k)}^2$. The definition of $\delta_{k}$ yields that $\delta_k\geq \norm[]{\epsilon_k}^2$ and it follows from (\ref{est:b}) that:
\begin{equation}\label{ineq:gamma}
    \begin{split}
        \gamma_k(\textcolor{black}{t})  &\geq 2\textcolor{black}{t} b_k-2\textcolor{black}{t}^2a_k+2\delta_{k}\norm[]{z_k-z^*}+\delta_{k}^2-\norm[]{\epsilon_k}^2\\&\quad - 2\sqrt{\tfrac{C}{\sigma}}(C+\tfrac{1}{\beta})\textcolor{black}{t}\norm[]{\epsilon_k}\norm[]{  z_k-z^*}+2\textcolor{black}{t}(\gamma_A+\gamma_B) \norm[]{\hat z_k -z^*}^2\\
        &\geq \underbrace{2\textcolor{black}{t} b_k-2\textcolor{black}{t}^2a_k}_{\mathrm{(III)}}+2\textcolor{black}{t}(\gamma_A+\gamma_B) \norm[]{\hat z_k -z^*}^2 + \underbrace{2(\delta_k-\sqrt{\tfrac{C}{\sigma}}(C+\tfrac{1}{\beta})\textcolor{black}{t}\norm[]{\epsilon_k})\norm[]{  z_k-z^*}}_{\mathrm{(IV)}}\,.
    \end{split}
\end{equation}

We continue to find a proper $t_k$ such that $\gamma_k(t_k)>0$. This goal boils down to ensuring both $\mathrm{(III)}$ and $\mathrm{(IV)}$ are positive.

Let $t_k=\tfrac{b_k}{2a_k}$. We first show $t_k=\tfrac{b_k}{2a_k}$ is the proper value to make sure $\mathrm{(III)}$ is positive. 
From (\ref{est:bk}), we observe that $b_k>0$.
Since $a_k>0$ and $b_k>0$, the quadratic term $\mathrm{(III)}$ in (\ref{ineq:gamma}) will be zero for $t_k= 0$ or $t_k=\tfrac{b_k}{a_k}$ and will be strictly positive for any $t_k\in(0,b_k/a_k)$ with the maximum value obtained at $t_k=\tfrac{b_k}{2a_k}$. As a result, $\mathrm{(III)}$ is strictly positive.

Second, we will show $\mathrm{(IV)}$ is positive when $t_k=\tfrac{b_k}{2a_k}$.
We observe that $0<t_k=\tfrac{b_k}{2a_k}<\tfrac{1}{2\textcolor{black}{\sqrt{c}}}$ for Assumption~\ref{Assumption:2}.  Thus, the definition of $\delta_{k}$ and $t_k=\tfrac{b_k}{2a_k}$ imply that
\begin{equation}
\begin{split}
    \mathrm{(IV)} &= 2(\delta_k-\sqrt{\tfrac{C}{\sigma}}(C+\tfrac{1}{\beta})t_k\norm[]{\epsilon_k})\norm[]{  z_k-z^*}\\
    &\geq (2\delta_k-\sqrt{\tfrac{C}{\sigma c}}{(C+\tfrac{1}{\beta})}\norm[]{\epsilon_k})\norm[]{  z_k-z^*}\\
    &\geq0\,.
\end{split}
\end{equation}
Since $\mathrm{(III)}$ and $\mathrm{(IV)}$ both are positive when $t_k =\tfrac{b_k}{2a_k}$,
(\ref{ineq:gamma}) yields:
\begin{equation}\label{decay_gamma0}
\begin{split}
    \gamma_k(t_k)&\geq 2t_k b_k -2t_k^2a + 2t_k(\gamma_A+\gamma_B) \norm[]{\hat z_k -z^*}^2+ (2\delta_k-\sqrt{\tfrac{C}{\sigma c}}{(C+\tfrac{1}{\beta})}\norm[]{\epsilon_k})\norm[]{  z_k-z^*}\\&\geq\frac{b_k^2}{2a_k}+ 2t_k(\gamma_A+\gamma_B) \norm[]{\hat z_k -z^*}^2\\&>0\,,\\
\end{split}
\end{equation}
It results from (\ref{decay_gamma0}) and the definition of $\gamma_k(t_k)$ that for each $k\in\mathbb{N}$, $(\norm{z_k-z^*}+\delta_k)\geq \norm{z(t_k)-z^*}=\norm{z_{k+1}-z^*}$.
We conclude that if $t_k = \tfrac{b_k}{2a_k}$, then $\gamma_k(t_k)>0\ \text{for all}\ k\in \mathbb{N}$ and the sequence $\norm[]{z_k-z^*}$ is bounded and converges as $k\to+\infty$ by using Lemma~\ref{lemmaPolyak}.\\
\textbf{Weak convergence}:\\
The sequence $(z_k)_{k\in\mathbb{N}}$ generated by Algorithm~\ref{Alg:mainAlg2} is bounded and $\norm[]{z_k-z}$ converges as $k\to\infty$ and $\gamma_k(t_k)$ converges to zero as $k\to\infty$ for all $z\in \mathrm{zer}(A+B)$. Set $z^*$ as an arbitrary weak sequential cluster point of $(z_k)_{k\in\mathbb{N}}$ and there exists a subsequence $(z_{k_n})_{n\in\mathbb{N}}$ such that $z_{k_n}\weakto z^*$. 

In order to obtain weak convergence of $ z_k$, by Proposition~\ref{fejer} with $\phi(t)=t$ and fixed metric $M_k=\opid$ and (\ref{polyakseq}), it suffices to show that $z^*\in \mathrm{zer}(A+B)$.

Using Assumption~\ref{Assumption:2}, $(M_k)_{k\in\mathbb{N}}$ is bounded uniformly by $C$. Together with boundedness of operator $B$, we obtain $a_k$ is bounded by $(C+\tfrac{1}{\beta})^2\norm[]{z_k-\tilde{z}_k}^2$ for each $k\in\mathbb{N}$. Using Assumption~\ref{Assumption:2}, we obtain that $b_k\geq\norm[M_k-\tfrac{1}{\beta}\opid]{z_k-\tilde z_k}^2\geq c\norm[]{z_k-\tilde z_k}^2$. By definition of $t_k$ and \eqref{decay_gamma0}, we have $\gamma_k(t_k)\textcolor{black}{\geq \frac{b_k^2}{2a_k}}\geq \tfrac{c^2}{2(C+\tfrac{1}{\beta})^2}\norm[]{z_k-\tilde z_k}^2 $. Since $\gamma_k(t_k)\to 0$, $\norm[]{z_k-\tilde z_k}\to 0$ as $k\to\infty$. Moreover, since $\epsilon_k\to 0$, $\norm[]{z_k-\hat z_k}\to 0$. We set $u_k\coloneqq M_k(z_k-\hat z_k)+B\hat z_k - Bz_k$. Therefore, we obtain that $u_k\to 0$ as $k\to +\infty$ and $\hat z_{k_n}\weakto z^*$ as $n\to+\infty$. We observe that $u_k\in A\hat z_k+B\hat z_k$. Then, by using Proposition~\ref{SeqClosed} and the fact that $A+B$ is maximally monotone, we conclude that $0\in Az^*+ Bz^*$. \textcolor{black}{Besides}, $\norm[]{z_k-z^*}$ decreases since $z^*\in \mathrm{zer}(A+B)$.  As mentioned above, the result follows from Proposition~\ref{fejer} with $\phi(t) = t$.\\
\textbf{Linear convergence rate}:\\
If we assume $\epsilon_k\equiv 0$, then $\hat z_k=\tilde z_k$ and $\delta_{k}\equiv 0$. Therefore, from (\ref{decay_gamma0}) we can obtain an estimation for $\gamma_k(t_k)$ when $t_k=\tfrac{b_k}{2a_k}$:
\begin{equation}\label{decay_gamma}
\begin{split}
    \gamma_k(t_k)=\norm[]{z_k-z^*}^2-\norm[]{z_{k+1}-z^*}^2\geq \frac{b_k^2}{2a_k} + 2t_k(\gamma_A+\gamma_B) \norm[]{\hat z_k -z^*}^2\,.\\
\end{split}
\end{equation}
The following part is to derive linear convergence for the case that either $\gamma_A>0$ or $\gamma_B>0$. 
The definition of $b_k$ and that of $a_k$ yield the following estimation for $t_k$:
\begin{equation}\label{estimation_alpha}
    t_k = \frac{b_k}{2a_k}=\frac{\scal{z_k-\hat z_k}{(M_k-B)(z_k-\hat z_k)}}{2\norm[]{(M_k-B)(z_k -\hat z_k)}^2} \overset{(\mathrm{i})}{\geq} \frac{\norm[M_k-\tfrac{1}{\beta}]{z_k-\hat z_k}^2}{2(C + \tfrac{1}{\beta})^2\norm[]{z_k-\hat z_k}^2}\overset{(\mathrm{ii})}{>}\frac{c}{2(C+\tfrac{1}{\beta})^2},
\end{equation}
where both $(\mathrm{i})$ and $(\mathrm{ii})$ use Assumption~\ref{Assumption:2}. For convenience, we denote $\tfrac{c}{2(C+\tfrac{1}{\beta})^2}$ by $\delta$. Using Assumption~\ref{Assumption:2} again, we have the estimation for the first term at the right hand side of (\ref{decay_gamma}): 
\begin{equation}\label{estimation_bkak}
        \tfrac{b_k^2}{2a_k}\geq \delta\norm[M_k-\tfrac{1}{\beta}\opid]{z_k-\hat z_k}^2>c\delta \norm[]{z_k -\hat z_k}^2\,.
\end{equation}
Furthermore, combining (\ref{decay_gamma}) with (\ref{estimation_alpha}), (\ref{estimation_bkak}) and the definition $\gamma_k(t_k):= \norm[]{z_k-z^*}^2 - \norm[]{z_{k+1}-z^*}^2$, we obtain
\begin{equation}\label{ineq:updatedescentA}
\begin{split}
    \norm[]{z_{k+1}-z^*}^2 &\leq \norm[]{z_k-z^*}^2 - 2(\gamma_A+\gamma_B)\delta\norm[]{\hat z_k -z^*}^2- c\delta \norm[]{z_k-\hat z_k}^2 \\
    &\leq \norm[]{z_k-z^*}^2 - \tfrac{1}{2}\min\{2(\gamma_A+\gamma_B)\delta, c\delta\}(2\norm[]{\hat z_k-z^*}^2+2\norm[]{z_k-\hat z_k}^2)\\
    &\overset{(\mathrm{i})}{\leq} \norm[]{z_k-z^*}^2 - \tfrac{1}{2}\min\{2(\gamma_A+\gamma_B)\delta, c\delta\}(\norm[]{\hat z_k-z^*} + \norm[]{z_k-\hat z_k})^2\\
    &\overset{(\mathrm{ii})}{\leq}\norm[]{z_k-z^*}^2 - \tfrac{1}{2}\min\{2(\gamma_A+\gamma_B)\delta, c\delta\}\norm[]{z_k-z^*}^2\\
    &\leq (1- \tfrac{1}{2}\min\{2(\gamma_A+\gamma_B)\delta, c\delta\})\norm[]{z_k-z^*}^2\,,\\
\end{split}
\end{equation}
where $(\mathrm{i})$ uses inequality $2x^2+2y^2\geq (x+y)^2$ and $(\mathrm{ii})$ uses triangle inequality. 
Consequently, we obtain linear convergence if $(\gamma_A+\gamma_B)>0$:\begin{equation}
    \norm[]{z_k-z^*}^2\leq (1- \xi)^k \norm[]{z_0-z^*}^2\,,
\end{equation}
where $\xi=\tfrac{1}{2}\min\{2(\gamma_A+\gamma_B)\delta, c\delta\}>0$.
\end{proof}

\subsection{Proof of Theorem~\ref{thm:resolvent}}\label{PRoof:resolvent}
Now, we give the proof of Theorem~\ref{thm:resolvent}. For convenience, we define translation operator $\map{\tau_p}{\mathcal{H}}{\mathcal{H}}$ by $\tau_p(x)=x-p$ with inverse $\tau_p^{-1}=\tau_{-p}$.
\begin{proof}
Computing the resolvent operator shows the following
equivalences
\begin{equation}
\begin{split}
    &x^* = J^V_T(z) = (\opid+ V^{-1}T)^{-1}(z)\\
    \iff &Vz \in (V+T)(x^*)\\
    \iff &Mz \in (M+T)(x^*)+\mathrm{s} Q(x^*-z)\\
    [y^*=M^{1/2}x^*]\iff & Mz \in (M+T)(M^{-1/2}y^*)+\mathrm{s} Q(M^{-1/2}y^* -z)\\
    \iff &Mz \in (M+T)(M^{-1/2}y^*)+\mathrm{s} Q M^{-1/2}(y^* - M^{1/2}z)\\
    \iff & Mz \in (M^{1/2}+TM^{-1/2})(y^*)+\mathrm{s} QM^{-1/2}(y^* - M^{1/2}z)\\
    [W = M^{-1/2}QM^{-1/2}]\iff &M^{1/2}z \in (\opid+M^{-1/2}TM^{-1/2})(y^*)+\mathrm{s} W(y^* - M^{1/2}z)\,.\\
\end{split}
\end{equation}
Since \textcolor{black}{uniqueness} and existence of $x^*$ is guaranteed by the properties of $J_T^V$, Lemma~\ref{attouch} yields the existence of a unique primal-dual pair $(x^*,u^*)$ that satisfies the equivalent relations in Lemma~\ref{attouch} with $B\coloneqq +\mathrm{s} W\circ \tau_{M^{1/2}z}$ and $A\coloneqq \tau_{M^{1/2}z}\circ (\opid+ M^{-1/2}TM^{-1/2})$. The mapping $B$ is single-valued and, as $T$ is a maximally monotone operator and $M$ is positive-definite, $A^{-1}=J_{M^{-1/2}TM^{-1/2}}\circ \tau_{-M^{1/2}z}$ is single valued. Therefore, the solution of $J_T^{V}$ can be computed by finding $u^*\textcolor{black}{\in\mathrm{im}(W)}$, \textcolor{black}{namely, $u^*\in\mathrm{im}(M^{-1/2}Q)$}, such that 
\begin{equation}
    0\in B^{-1}u^* - A^{-1}(-u^*)=[(\mathrm{s} W)^{-1}u^*+M^{1/2}z] - J_{M^{-1/2}TM^{-1/2}}(M^{1/2}z - u^*)\,,
\end{equation}
and the using
\begin{equation}
    x^* = M^{-1/2}y^*\quad \mathrm{and}\quad y^*=A^{-1}(-u^*) = J_{M^{-1/2}TM^{-1/2}}(M^{1/2}z- u^*)\,.
\end{equation}
Substituting $u^*= \textcolor{black}{\mathrm{s} M^{-1/2}v^*}\textcolor{black}{\in\mathrm{im}(M^{-1/2}Q)}$ in both problems, multiplying the former one from left with $M^{-1/2}$, and \textcolor{black}{using $M^{-1/2}W^{-1}M^{-1/2}=Q^{-1}$ where $Q^{-1}$ is a set-valued inverse operator of $Q$ defined by the graph $\{(v,w)\in\mathrm{im}(Q)\times \mathcal{H}\vert Qv =w\}$} leads to \begin{equation}
    \begin{cases}
    0\in \textcolor{black}{Q^{-1}}v^*+ z - M^{-1/2}\circ J_{M^{-1/2}TM^{-1/2}}\circ M^{1/2}(z \textcolor{black}{-\mathrm{s}} M^{-1}v^*)\\
    x^* = M^{-1/2}\circ J_{M^{-1/2}TM^{-1/2}}\circ M^{1/2}(z \textcolor{black}{-\mathrm{s} } M^{-1}v^*)\,.
    \end{cases}
\end{equation}
Since a unique solution to $J^V_T$ exists, there exists $v^*\in \mathrm{im}(Q)$ that satisfies the inclusion. \textcolor{black}{We notice that $\mathrm{im}(Q^{-1})=\mathrm{im}(\mathrm{Q^+})+\mathrm{ker}Q$ where $\map{Q^+}{\mathrm{im}(Q)}{\mathrm{im}(Q)}$ is the inverse of $Q$ restricted on $\mathrm{im}(Q)$ and $\mathrm{ker}(Q)$ denotes the kernel of $Q$.} Given \textcolor{black}{the linear mapping} $\map{U}{\mathbb{R}^r}{\mathrm{im}(Q)}$, which can be realized using $r$ linearly independent $u_1,\cdots,u_r\in \mathcal{H}$ by $\alpha\to \sum_{i=1}^{r}\alpha_i u_i$, the inclusion problem is equivalent to finding the unique root $\alpha^*\in \mathbb{R}^{r}$
of $\ell(\alpha)$, \textcolor{black}{namely,
\begin{equation}
\ell(\alpha)\coloneqq U^* Q^{+} U\alpha + U^*(z - J^M_T(z-\mathrm{s} M^{-1}U\alpha))\,,
\end{equation}
} where $U^*$ denotes the adjoint of $U$ \textcolor{black}{and $J^M_T$ is an abbreviation of the mapping $M^{-1/2}\circ J_{M^{-1/2}TM^{-1/2}}\circ M^{1/2}$}. 
The following shows that $\ell(\alpha)$ is Lipschitz continuous with constant $\norm{U^*Q^{+}U} + \norm{M^{-1/2}U}^2$:
(We abbreviate $J_{M^{-1/2}TM^{-1/2}}$ by $J$ in the following)
\begin{equation}
\begin{split}
    &\scal{\ell(\alpha) -\ell(\beta)}{\alpha - \beta}\\
    =& \norm[U^*Q^{+}U]{\alpha-\beta}^2 - \scal{J(M^{1/2}z-\mathrm{s} M^{-1/2} U \alpha)-J(M^{1/2}z-\mathrm{s} M^{-1/2}U\beta)}{M^{-1/2}U(\alpha -\beta)}\\
    \leq & \norm{U^* Q^{+}U} \norm{\alpha- \beta}^2+ \norm{M^{-1/2}U}^2 \norm{\alpha-\beta}^2\,,\\
\end{split}
\end{equation}
where, in the last line, we use the 1-Lipschitz continuity (non-expansive) of $J$.

The following shows strict monotonicity of $l$. We rewrite $\ell(\alpha)$ as follows:
\begin{equation}
    \begin{split}
        \ell(\alpha)&=U^*Q^{+}U\alpha + U^*M^{-1/2}(M^{1/2}z- J_{M^{-1/2}TM^{-1/2}}(M^{1/2}z-\mathrm{s} M^{-1/2}U\alpha))\\
        &=U^*Q^{+}U\alpha+\mathrm{s} U^*M^{-1}U\alpha + U^*M^{-1/2}(\opid - J_{M^{-1/2} T M^{-1/2}})(M^{1/2}z-\mathrm{s} M^{-1/2} U \alpha)\\
        &=U^*(Q^{+}+\mathrm{s} M^{-1})U\alpha + U^*M^{-1/2}J_{M^{-1/2}T^{-1}M^{-1/2}}(M^{1/2}z-\mathrm{s} M^{-1/2}U\alpha)\,.\\
    \end{split}
\end{equation}
Using the 1-co-coercivity of $J_{M^{1/2} T^{-1} M^{1/2}}$, the function $\ell(\alpha)$ can be seen to be strictly monotone if $\alpha \mapsto U^* ( Q^{+} +\mathrm{s} M^{-1} ) U \alpha$ is strictly monotone. This fact is clear for the case $\mathrm{s}=1$.
Therefore, in the remainder, we show strictly monotonicity of $\alpha \mapsto U^* (Q^{+} - M^{-1} ) U \alpha= U^* M^{-1/2} (M^{1/2} Q^{+} M^{1/2}-\opid)M^{-1/2} U\alpha$. We observe $M-Q\in \mathcal{S}_{0}(\mathcal{H})$ implies that $\norm[]{M^{-1/2}Q M^{-1/2}}<1$ and by $1\leq \norm[]{AA^{-1}}\leq \norm[]{A}\norm[]{A^{-1}}$, we conclude that $\norm[\mathrm{im}(M^{-1/2}Q)]{M^{1/2}Q^+M^{1/2}}>1$ for the restriction of the operator norm to $\mathrm{im}(M^{-1/2}Q)$, hence, $Q^+-M^{-1}\in\mathcal{S}_{++}(\mathcal{H})$.

According to Lemma~\ref{lem:JMT}, we can replace $M^{-1/2}\circ J_{M^{-1/2}TM^{-1/2}}\circ M^{1/2}$ with $J^M_T$.
Then we obtain the formula in the statement of Theorem~\ref{thm:resolvent}.
\end{proof}

\subsubsection{Proof of Remark~\ref{remark:single}}\label{Proof:remark-single valued}\textcolor{black}{
\begin{proof}
        Given $y\in\mathrm{im}(Q)$, assume there exist $U^*x_1,U^*x_2\in U^*Q^{-1}y$ with $x_1,x_2\in Q^{-1}y$. For arbitrary $\beta\in\R^r$, we have $\scal{\beta}{U^*x_1-U^*x_2}=\scal{U\beta}{x_1-x_2}$. Since $U\beta\in \mathrm{im}Q$, then there exists some $z$ such that $U\beta = Qz$. As a result, $\scal{\beta}{U^*x_1-U^*x_2}=\scal{U\beta}{x_1-x_2}=\scal{Qz}{x_1-x_2}=\scal{z}{Qx_1-Qx_2}=0$. We notice that $\scal{\beta}{U^*x_1-U^*x_2}$ holds for arbitrary $\beta \in \R^r$. It implies $U^*x_1=U^*x_2$ and $U^*Q^{-1}y$ is single-valued.
\end{proof}
}

\subsection{Proof of Corollary~\ref{coro:resolvent}}\label{Proof:coro:resolvent}
\begin{proof}
\textcolor{black}{Let $\mathcal{H}=\R^n$. In this case, we can identify a linear mapping $\map{U}{\R^r}{\mathcal{H}}$ with a low rank matrix $U\in\R^{n\times r}$. Similarly, we can also identify $\map{U^*}{\mathcal{H}}{\R^r}$ with $U^\top\in\R^{n\times r}$.} 
Since $Q=UU^\top$ for some $U\in\mathcal{B}(\mathbb{R}^r,\textcolor{black}{\R^n})$, we have 
\begin{equation}
    \mathrm{im}(Q) =\{UU^\top v\vert v\in \textcolor{black}{\R^n}\}=\{U\alpha  \vert \alpha \in \mathbb{R}^r\}\,.
\end{equation}
Since $Q^+$ is inverse of $Q$ on $\mathrm {im}(Q)$, the following holds for arbitrary $v\in \textcolor{black}{\R^n}$:
\begin{equation}
    QQ^{+}Qv = Qv\iff UU^\top Q^{+}UU^\top v=UU^\top v \iff UU^\top Q^{+}U\alpha =U\alpha\,.
\end{equation}
Since the column vectors $\{u_i\}_{i=1,\cdots,r}$ of $U$ are independent with each other, $UU^\top Q^{+}U\alpha =U\alpha$ yields that $ U^\top Q^{+}U\alpha =\alpha$.
Therefore, the root-finding problem in Theorem~\ref{thm:resolvent} simplifies to (\ref{eq:la}). 
\end{proof}

\subsection{Proof of Proposition~\ref{semi-smooth-Newton}}\label{Proof:semi-smooth-Newton}
\begin{proof}
Our proof relies on the convergence result \cite[Theorem 7.5.5]{facchinei2003finite}. By the same argument as the one in Appendix B.5 paper \cite{becker2019quasi}, we obtain $\partial^C l(\alpha^*)$ is non-singular. If $\ell(\alpha)$ is tame, then by \cite[Theorem 1]{bolte2009tame}, $\ell(\alpha)$ is semi-smooth. In order to apply this result it remains to show that $\ell(\alpha)$ is tame.
The property of definable functions is preserved by operations including the sum, composition by a linear operator, derivation and canonical projection (\cite{van1996geometric}, \cite{coste2000introduction}). Since $T$ is a tame mapping, $I+M^{-1/2}TM^{-1/2}$ is tame as well as its graph. Here $I$ is identity. Then the resolvent  $J_{M^{-1/2}TM^{-1/2}}=(I+M^{-1/2}TM^{-1/2})^{-1}$ which is defined by the inverse of the same graph is tame (\cite{ioffe2009invitation}) and single-valued. By the stability of the sum and composition by linear operator, we obtain that $\ell(\alpha)$ is tame.  
\end{proof}
\subsection{Proof of Proposition~\ref{prop:bound}}\label{Proof:bound}
\begin{proof}
Let $p=J^{V}_T(z)$. Since resolvent operator is non-expansive with respect to \textcolor{black}{$V$}, \textcolor{black}{$\norm[V]{p}=\norm[V]{J^{V}_T(z)}\leq \norm[V]{z}+\norm[V]{J_{T}^V(0)}$}. By duality, optimal $\alpha^*$ will satisfy $ \alpha^* =\textcolor{black}{u^\top} (p-z)$. Then, 
\begin{equation}
\begin{split}
    |\alpha^*|&=|\textcolor{black}{u^*} (p-z)|\\
    &\leq \textcolor{black}{\norm[V^{-1}]{u}(2\norm[V]{z} + \norm[V]{J^V_T(0)})}\,.\\
\end{split}
\end{equation}
If $V\in\mathcal{S}_c(\R^n)$ is bounded by a constant $C$, then
\begin{equation}
    |\alpha^*|\leq \textcolor{black}{\norm[V^{-1}]{u}(2\norm[V]{z} + \norm[V]{J^V_T(0)})}\leq \textcolor{black}{\frac{C}{c}\norm[]{u}(2\norm[]{z} + \norm[]{J^V_T(0)})}\,.\quad\qedhere \qedsymbol
\end{equation}
\end{proof}
\subsection{Proof of Proposition~\ref{prop:PDHGnew}}
\begin{proof}\label{Proof:PDHGnew}
$\mathcal{L}(\alpha)$ is as defined in Proposition~\ref{prop:PDHG}. Substituting \textcolor{black}{$\alpha = \xi + U^* V^{-1}Bz$ in $\mathcal{L}(\alpha)$,} we obtain $\mathcal{J}(\xi)=\mathcal{L}(\alpha)$. \textcolor{black}{Then, there exists $\xi^*$ such that $\alpha^* = \xi^* + U^* V^{-1}Bz$.} In (\ref{resolventJM}), we do the same substitution.
\begin{equation}
\begin{split}
    \hat z &= J_T^M(\breve{z} -\mathrm{s}  M^{-1}U\alpha^*) \\
    & = J_A^M(\breve{z} -\mathrm{s}  M^{-1}U\xi^*-\mathrm{s} M^{-1}U \textcolor{black}{U^*} V^{-1}Bz)\\
    & = J_A^M(z - M^{-1}MV^{-1}Bz-\mathrm{s} M^{-1}U\textcolor{black}{U^*} V^{-1}Bz -\mathrm{s}  M^{-1}U\xi^*)\\
    & = J_A^M(z - M^{-1}\underbrace{(M+\mathrm{s} U\textcolor{black}{U^*} )}_{=V}V^{-1}Bz -\mathrm{s}  M^{-1}U\xi^*)\\
    & = J_A^M(z - M^{-1}Bz -\mathrm{s}  M^{-1}U\xi^*)\,.\\
\end{split}
\end{equation}
Due to Proposition~\ref{prop:PDHG},  $\mathcal{L}(\alpha)$ is Lipschitz with constant $1+\norm{M^{-1/2}U}^2$ and strongly monotone. \textcolor{black}{Since $\mathcal{J}(\xi)$ is obtained by translation, it enjoys the same properties.}
\end{proof}
\subsection{Proof of Lemma~\ref{lemma:PDHGcond1}}\label{Proof:PDHGcond1}
\begin{proof}
    \begin{enumerate}[(i)]
        \item $M_0$ is symmetric positive-definite and $\gamma_k u_ku_k^*$ is symmetric positive semi-definite since $\inf_{k \in \N}\gamma_k>0$. Thus, $M_k \succeq M_0$ showing that $M_k$ is  symmetric positive-definite. Moreover, $M_k-\frac{1}{\beta}\opid \succeq M_0-\frac{1}{\beta}\opid \succeq c \opid$, which shows part (i) of Assumption~\ref{Assumption:2}. We now show that $M_k$ obeys (ii) of the assumption. Since $B$ is $\beta$-co-coercive, we have in view of \cite[Remark~4.34 and Proposition~4.35]{bauschke:hal-00643354} that
        \begin{equation}\label{eq:cocoercivebnd}
        \beta\norm{y_k}^2 \leq \scal{y_k}{s_k} \leq \norm{s_k}^2/\beta .
        \end{equation}
        Assume that $s_k \neq 0$ (otherwise, there is nothing to prove). Thus
        \[
        \scal{M_0 s_k - y_k}{s_k} = \norm[M_0]{s_k}^2 - \scal{y_k}{s_k} \geq \norm[M_0]{s_k}^2 - \norm{s_k}^2/\beta = \norm[M_0-\beta\opid]{s_k}^2 \geq c \norm{s_k}^2 .
        \]
        Combining this with \eqref{eq:cocoercivebnd}, we get
        \begin{equation}\label{eq:ukbnd}
        \norm{u_k} = \frac{\norm{y_k-M_0s_k}}{\sqrt{\scal{M_0 s_k - y_k}{s_k}}} \leq \frac{\norm{y_k} + \norm{M_0}\norm{s_k}}{\sqrt{c}\norm{s_k}} \leq \frac{1/\beta + \norm{M_0}}{\sqrt{c}} .
        \end{equation}
        This entails that 
        \[
        \sup_{k \in \N}\norm{M_k} \leq \norm{M_0} + \sup_{k \in \N}\gamma_k\norm{u_k}^2 \leq \norm{M_0} + \frac{(1/\beta + \norm{M_0})^2}{c} \sup_{k \in \N}\gamma_k < +\infty .
        \]
        \item Let us focus on part (i) of Assumption~\ref{Assumption:2}. We have, using \eqref{eq:ukbnd},
        \[
        M_k-\frac{1}{\beta}\opid \succeq M_0-\frac{1}{\beta}\opid -\gamma_k\norm{u_ku_k^*}\opid=(c - \gamma_k\norm{u_k}^2)I \succeq \left(c-\frac{(1/\beta + \norm{M_0})^2}{c}\gamma_k\right)
        \]
        and the last term is positive under the prescribed choice of $\gamma_k$. To verify part (ii), it is sufficient to observe that $\norm{M_k} \leq \norm{M_0}$.
    \end{enumerate}
\end{proof}
\subsection{Proof of Lemma~\ref{Lemma:PDHGcond2}}\label{Proof:PDHGcond2}
\begin{proof}
\begin{enumerate}[(i)]
\item As in the proof of Lemma~\ref{lemma:PDHGcond1},  $M_k \succeq M_0 \succeq (1/\beta+c)\opid > 0$. Moreover
\begin{align*}
(1+\eta_k)M_k - M_{k+1}
&= \eta_k M_0 + (1+\eta_k)\gamma_ku_ku_k^* - \gamma_{k+1}u_{k+1}u_{k+1}^* \\
&\succeq \eta_k (1/\beta+c)\opid - \gamma_{k+1}\norm{u_{k+1}}^2 \\
&= \eta_k (1/\beta+c)\opid - \eta_{k+1}(1/\beta+c)\opid \\
&= (1/\beta+c)(\eta_k - \eta_{k+1})\opid \succeq 0 ,
\end{align*}
since $\eta_k$ is non-increasing. The uniform boundedness of $M_k$ is straightforward as $\norm{M_k} \leq \norm{M_0} + \eta_k (1/\beta+c)$.
\item We have in this case
\[
M_k-\frac{1}{\beta}\opid \succeq M_0-\frac{1}{\beta}\opid -\gamma_k\norm{u_ku_k^*}\opid \succeq \left(c - \eta_k\kappa(1/\beta+c)\right)\opid \succeq \left(c - (1-\kappa)(1/\beta+c)\right)\opid 
\]
Under our condition on $\kappa$, we have $c - (1-\kappa)(1/\beta+c > 0$. In addition, 
\begin{align*}
(1+\eta_k)M_k - M_{k+1}
&= \eta_k M_0 - (1+\eta_k)\gamma_ku_ku_k^* + \gamma_{k+1}u_{k+1}u_{k+1}^* \\
&\succeq \eta_k (1/\beta+c)\opid - (1+\eta_k)\eta_k\kappa(1/\beta+c)\opid \\
&= \eta_k (1/\beta+c)(1-(1+\eta_k)\kappa)\opid \succeq 0
\end{align*}
since $\eta_k \leq (1-\kappa)/\kappa$. $M_k$ is also uniformly bounded with the same argument as above. This completes the proof.
\end{enumerate}
\end{proof}

\end{appendices}
\section*{Data of availability}
In order to record the exact algorithmic details for our experiments, all code for experiments from this paper is available at \url{https://github.com/wsdxiaohao/quasi_Newton_FBS.git}.
\section*{Conflict of interest statements}
This work was supported by Deutsche Forschungsgemeinschaft (Grant numbers OC150/5-1) and Agence Nationale de la Recherche (Grant numbers ANR-20-CE92-0037-01).
\bibliographystyle{siam}
\bibliography{main}
\end{document}